\theoremstyle{plain}
\newtheorem{theorem}{Theorem}[section]
\newtheorem{lemma}[theorem]{Lemma}
\newtheorem{corollary}[theorem]{Corollary}
\newtheorem{proposition}[theorem]{Proposition}
\newtheorem{claim}{}[theorem] 
\newenvironment{subproof}{\begin{proof}[Subproof.]}{\end{proof}}
\theoremstyle{definition}
\Crefname{theorem}{Theorem}{Theorems}
\DeclareMathOperator{\cl}{cl}
\DeclareMathOperator{\si}{si}
\DeclareMathOperator{\co}{co}
\newcommand{\del}{\backslash}
\newcommand{\ba}{\backslash}
\newcommand{\cocl}{\cl^*}
\newcommand{\dY}{$\Delta$\nobreakdash-$Y$}
\newcommand{\Yd}{$Y$\nobreakdash-$\Delta$}
\newcommand{\GF}{\textrm{GF}}
\newcommand{\seq}[1]{[#1]}
\newcommand{\utfutf}{\{U_{2,5},U_{3,5}\}}
\tikzstyle{VertexStyle} = [shape = circle, draw, fill]
\tikzset{pre/.style={-}}    
\tikzstyle{every node}=[circle, inner sep=0pt, minimum width=4pt]
\title{Excluded minors are almost fragile II: essential elements}
\author[N.\ Brettell]{Nick Brettell}
\address{School of Mathematics and Statistics, Victoria University of Wellington, New Zealand}
\email{nick.brettell@vuw.ac.nz}
\author[J.\ Oxley]{James Oxley}
\address{Department of Mathematics, Louisiana State University, Baton Rouge, Louisiana, USA}
\email{oxley@math.lsu.edu}
\author[C.\ Semple]{Charles Semple}
\address{School of Mathematics and Statistics, University of Canterbury, New Zealand}
\email{charles.semple@canterbury.ac.nz}
\author[G.\ Whittle]{Geoff Whittle}
\address{School of Mathematics and Statistics, Victoria University of Wellington, New Zealand}
\email{geoff.whittle@vuw.ac.nz}
\thanks{The first author was supported by a Rutherford Foundation postdoctoral fellowship.  The first, third and fourth authors were supported by the New Zealand Marsden Fund.}
\date{\today}
\keywords{matroids, excluded minors, representations, fragility}
\begin{document}

\begin{abstract}
  Let $M$ be an excluded minor for the class of $\mathbb{P}$-representable matroids for some partial field $\mathbb{P}$, let $N$ be a $3$-connected strong $\mathbb{P}$-stabilizer that is non-binary, and suppose $M$ has a pair of elements $\{a,b\}$ such that $M\ba a,b$ is $3$-connected with an $N$-minor.
  Suppose also that $|E(M)| \ge |E(N)|+11$ and $M \ba a,b$ is not $N$-fragile.
  In the prequel to this paper, we proved that $M \ba a,b$ is at most five elements away from an $N$-fragile minor.
  An element $e$ in a matroid $M'$ is \emph{$N$-essential} if neither $M'/e$ nor $M' \ba e$ has an $N$-minor.
  In this paper, we prove that, under mild assumptions, $M \ba a,b$ is one element away from a minor having at least $r(M)-2$ elements that are $N$-essential.
\end{abstract}

\maketitle

\section{Introduction}

This paper is a sequel to \cite{BCOSW20}.
It achieves a technical improvement on the results of that paper.
This technical improvement is a key step that enables us to prove that an excluded minor for the class of $2$-regular matroids, or the class of matroids with six inequivalent representations over $\GF(5)$, has at most 15 elements.
This makes it possible to compute the full list of excluded minors for $2$-regular matroids, and to bring the task of computing such a list within reach for the class of matroids with six inequivalent representations over $\GF(5)$ \cite{paper2}.
These results are important steps towards finding an excluded-minor characterisation for the class of matroids representable over all fields of size at least four, and for the class of $\GF(5)$-representable matroids.

It is unlikely that a reader would take an interest in this paper without being aware of the overall context that motivates this paper.
Our unlikely reader is referred to the introductions to \cite{BCOSW20,paper2,BWW20}.
We would be similarly surprised if the reader were not familiar with the technical terms in what follows.
Such an unexpected reader is referred to Section~\ref{pre} for definitions of these terms.

In \cite{BCOSW20} we showed that if
$M$ is an excluded minor for representability over a partial field~$\mathbb{P}$, and $M$ has a strong stabilizer~$N$ as a minor, then
\begin{enumerate}
  \item there is some matroid $M'$ that is $\Delta$-$Y$ equivalent to $M$, and $M'$ has a pair of elements $\{a,b\}$ such that $M \ba a,b$ is $N$-fragile; and
  \item for any delete pair $\{a,b\}$, the matroid $M \ba a,b$ is at most $5$ 
    elements away from being $N$-fragile.
\end{enumerate}
  
These results were intended as tools to bound the size of an excluded minor.
But they have some shortcomings.
For (i), we have to move to a $\Delta$-$Y$ equivalent matroid and cannot choose the delete pair.
This limits its applicability as we would like to be able to optimise our delete pair relative to other criteria.
For (ii), being seven elements away from a fragile minor negatively affects the bounds that we would like to obtain.
Ideally we would like to be able to say that for any delete pair $\{a,b\}$, the matroid $M \ba a,b$ is $N$-fragile.
Although we are still unable to do this in general, in this paper we circumvent the issue by considering elements that are ``$N$-essential'' in $M \ba a,b$ when this matroid is close to, but not, $N$-fragile.
  
Specifically, an element $e$ of a matroid $M'$ with an $N$-minor is \emph{$N$-essential} if neither $M'\backslash e$ nor $M'/e$ has an $N$-minor.
Evidently $M'$ can have at most $|E(N)|$ elements that are $N$-essential.
Thus finding $N$-essential elements in $M\backslash a,b$ gives us a means of controlling the size of $M$, even when $M\backslash a,b$ is not $N$-fragile.
\Cref{thegrandfantasy}, the main result of this paper, does precisely this.
It is a technical result, but it does the job, and we are able to use it in~\cite{paper2} to obtain the bounds discussed at the start of this introduction.
  
\section{Preliminaries} \label{pre}

All undefined matroid terminology follows Oxley~\cite{Oxley11}.
For sets $X$ and $Y$, we say $X$ \emph{meets} $Y$ if $X \cap Y \neq \emptyset$, and $X$ \emph{avoids} $Y$ if $X \cap Y = \emptyset$.
For a partition $\{X_1,X_2,\dotsc,X_m\}$ or an ordered partition $(X_1,X_2,\dotsc,X_m)$ we require that each cell $X_i$ is non-empty.

\subsection*{Segments, cosegments, and fans}

Let $M$ be a matroid.
A subset~$S$ of $E(M)$ with $|S| \ge 3$ is a \emph{segment} if every $3$-element subset of $S$ is a triangle.
A \emph{cosegment} is a segment of $M^*$.
A subset~$F$ of $E(M)$ with $|F| \ge 3$ is a \emph{fan} if there is an ordering $(f_1, f_2, \dotsc, f_\ell)$ of $F$ such that
\begin{itemize}
  \item[(a)] $\{f_1,f_2,f_3\}$ is either a triangle or a triad, and
  \item[(b)] for all $i \in \seq{\ell-3}$, if $\{f_i, f_{i+1}, f_{i+2}\}$ is a triangle, then $\{f_{i+1}, f_{i+2}, f_{i+3}\}$ is a triad, whereas if $\{f_i, f_{i+1}, f_{i+2}\}$ is a triad, then $\{f_{i+1}, f_{i+2}, f_{i+3}\}$ is a triangle.
\end{itemize}
When there is no ambiguity, we also say that the ordering $(f_1,f_2,\dotsc,f_\ell)$ is a fan.
If $F$ has a fan ordering $(f_1, f_2, \dotsc, f_\ell)$ where $\ell \geq 4$, then $f_1$ and $f_\ell$ are the \emph{ends} of $F$, and $f_2, f_3, \dotsc, f_{\ell-1}$ are the \emph{internal elements} of $F$.
We also say such a fan has \emph{size} $\ell$.
We say that a fan $F$ is \emph{maximal} if there is no fan that properly contains $F$.

\subsection*{Connectivity}

Let $M$ be a matroid and let $X \subseteq E(M)$.
The set $X$ or the partition $(X,E(M)-X)$ is \emph{$k$-separating} if $\lambda_M(X) < k$, where $$\lambda_M(X) = r(X) + r(E(M)-X) - r(M) = r(X)+r^*(X)-|X|.$$
A $k$-separating set $X$ or partition $(X,E(M)-X)$ is \emph{exact} if $\lambda_M(X) = k-1$.
If $X$ is $k$-separating and $|X|,|E(M)-X| \ge k$, then $X$ is a \emph{$k$-separation}.
The matroid $M$ is $k'$-connected if $M$ has no $k$-separations for $k < k'$.
If $M$ is $2$-connected, we simply say it is \emph{connected}.
Suppose $M$ is connected.
If for every $2$-separation $(X,Y)$ of $M$ either $X$ or $Y$ is a parallel pair (or parallel class), then $M$ is \emph{$3$-connected up to parallel pairs} (or \emph{parallel classes}, respectively).
Dually, if for every $2$-separation $(X,Y)$ of $M$ either $X$ or $Y$ is a series pair (or series class), then $M$ is \emph{$3$-connected up to series pairs} (or \emph{series classes}, respectively).

We say $Z \subseteq E(M)$ is in the \emph{guts} of a $k$-separation $(X,Y)$ if $Z \subseteq \cl(X-Z) \cap \cl(Y-Z)$, and we say $Z$ is in the \emph{coguts} of $(X,Y)$ if $Z$ is in the guts of $(X,Y)$ in $M^*$.
We also say $z$ is in the guts (or the coguts) of a $k$-separation $(X,Y)$ if $\{z\}$ is in the guts (or the coguts, respectively) of $(X,Y)$.
Note that if $z$ is in the guts of $(X,Y)$, then $z \notin \cocl(X)$ and $z \notin \cocl(Y)$.

We say that a partition $(X_1,X_2,\dotsc,X_m)$ of $E(M)$ is a \emph{path of $k$-separations} if $(X_1 \cup \dotsm \cup X_i, X_{i+1} \cup \dotsm \cup X_m)$ is exactly $k$-separating for each $i \in \seq{m-1}$.
Note that $|X_1|,|X_m| \ge 2$ (and $|X_i| \ge 1$ for all $i \in [m]$). 

We require the following lemma:
\begin{lemma}[see {\cite[Lemma~2.11]{BS14}}, for example]
  \label{gutsandcoguts}
  Let $(X,Y)$ be a $3$-separation of a $3$-connected matroid $M$.
  If $X \cap \cl(Y) \neq \emptyset$ and $X \cap \cocl(Y)\neq \emptyset$, then $|X \cap \cl(Y)|=1$ and $|X \cap \cocl(Y)|=1$.
\end{lemma}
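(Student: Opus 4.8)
The plan is to set $P = X \cap \cl(Y)$ and $Q = X \cap \cocl(Y)$, both nonempty by hypothesis, and to show $|P| = |Q| = 1$. Throughout I would use that $\lambda_M(X) = 2$, since $M$ is $3$-connected and $(X,Y)$ is a $3$-separation. First I would dispose of the easy observation that $P$ and $Q$ are disjoint: if $z$ lay in both, then $r(Y \cup z) = r(Y)$ and $r(X - z) = r(X)-1$, so $\lambda_M(X - z) = 1$, and as $|Y \cup z| \ge 4$ this exhibits $X - z$ as a side of a $2$-separation, forcing $|X| \le 2$. Since replacing $M$ by $M^*$ interchanges $P$ and $Q$ while preserving $3$-connectivity and the hypotheses, it suffices to prove $|P| = 1$; that is where the work lies.

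So suppose $|P| \ge 2$, aiming for a contradiction. As $M$ has no parallel pairs and $P \subseteq \cl(X) \cap \cl(Y)$, a flat of rank at most $\lambda_M(X) = 2$ (by submodularity of $r$), I get $r(P) = 2$; set $G = \cl(P) = \cl(X) \cap \cl(Y)$, the ``guts line'' of $(X,Y)$. Two preliminary claims are needed. (1) $R := X - (P \cup Q) \ne \emptyset$: otherwise $X = P \cup Q$, and since the elements of $Q$ are coloops of $M|X$ (because $q \in \cocl(Y)$ means $q \notin \cl(X - q)$), $Q$ is independent with $r(X) = r(P) + |Q|$; feeding this and $r(Y \cup P) = r(Y)$ into $\lambda_M(X) = 2$ gives $\lambda_M(Q) = 2 - r(P) \le 0$, contradicting connectivity. (2) $P \subseteq \cl(R)$: first, $P \subseteq \cl(X - P)$, for otherwise $\lambda_M(X - P) \le 1$ and, as $|Y \cup P| \ge 3$, we would be driven back to $X = P \cup Q$; then, for $z \in P$, a circuit witnessing $z \in \cl(X - P)$ lies in $X$ and so avoids the coloops $Q$ of $M|X$, hence lies in $R \cup z$.

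The endgame is then a single use of submodularity applied to the flats $\cl(R)$ and $\cl(Y)$. From $P \subseteq \cl(R)$ we get $G \subseteq \cl(R)$, and since $\cl(R) \subseteq \cl(X)$ this pins down $\cl(R) \cap \cl(Y) = G$; also $\cl(R) \cup \cl(Y) \supseteq R \cup P \cup Y = E(M) - Q$, and $\cl(R) = \cl(R \cup P) = \cl(X - Q)$, so $r(\cl(R)) = r(X - Q) = r(X) - |Q|$ (the last equality being the rank form of $Q \subseteq \cocl(Y)$). Submodularity then gives
\[
  (r(X) - |Q|) + r(Y) \;=\; r(\cl(R)) + r(\cl(Y)) \;\ge\; r(G) + r\bigl(\cl(R) \cup \cl(Y)\bigr) \;\ge\; 2 + r\bigl(E(M) - Q\bigr),
\]
and as $r(X) + r(Y) = r(M) + 2$ this forces $r(E(M) - Q) = r(M) - |Q|$. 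Since $Q$ is independent, $\lambda_M(Q) = |Q| + (r(M) - |Q|) - r(M) = 0$, so $Q$ is a nonempty proper separator of the connected matroid $M$ --- absurd. Hence $|P| = 1$, and dualising yields $|Q| = 1$.

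I expect the main obstacle to be spotting what to uncross: the key realisation is that once $|P| \ge 2$, the entire guts line $G$ must already lie in $\cl(R)$, where $R = X - (P \cup Q)$, and this is forced by the fact that $Q$ consists of coloops of $M|X$ (so that dependencies within $X$ avoid $Q$). Getting from the weak statement $P \subseteq \cl(X - P)$ to the strong one $P \subseteq \cl(R)$ is the crux; after that the rank bookkeeping collapses $Q$ into a separator almost automatically.
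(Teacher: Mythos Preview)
Your proof is correct. The paper does not supply its own proof of this lemma; it records it as a known result with a citation, so there is no in-paper argument to compare against. Your direct argument --- isolating the coguts elements $Q$ as coloops of $M|X$, showing the guts set $P$ is spanned by the residual $R = X \setminus (P \cup Q)$, and then applying submodularity to $\cl(R)$ and $\cl(Y)$ to force $\lambda_M(Q)=0$ --- is a clean, self-contained proof.
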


A $3$-separation $(X,Y)$ of $M$ is a \textit{vertical $3$-separation} if $\min\{r(X),r(Y)\}\geq 3$. We also say that a partition $(X,\{z\},Y)$ is a \textit{vertical $3$-separation} of $M$ when both $(X\cup z,Y)$ and $(X,Y\cup z)$ are vertical $3$-separations with $z$ in the guts. We will write $(X,z,Y)$ for $(X,\{z\},Y)$.
If $(X,z,Y)$ is a vertical $3$-separation of $M$, then we say that $(X,z,Y)$ is a \emph{cyclic $3$-separation} of $M^*$.

Suppose $e \in E(M)$, and $(X,Y)$ is a partition of $M \ba e$ with $\lambda_{M \ba e}(X)=k$.
We say that $e$ \emph{blocks} $X$ if $\lambda_{M}(X) > k$.
If $e$ blocks $X$, then $e \notin \cl_M(Y)$.
We say that $e$ \emph{fully blocks} $(X,Y)$ if both $\lambda_M(X) > k$ and $\lambda_M(X \cup e) > k$.  It is easy to see that $e$ fully blocks $(X,Y)$ if and only if $e \notin \cl_M(X) \cup \cl_M(Y)$.

We say a set $T \subseteq E(M)$ is a \emph{triangle-triad} if $T$ is both a triangle and a triad.
Note that when $M$ has a triangle-triad $T$, we have $\lambda_M(T) = 1$, so if $|E(M)| \ge 5$, then $M$ is not $3$-connected.

\subsection*{Minors and fragility}

Let $M$ be a matroid, let $\mathcal{N}$ be a set of matroids, and let $x$ be an element of $M$.
For a matroid $N$, we say that \emph{$M$ has an $N$-minor} if $M$ has a minor isomorphic to $N$.
We say $M$ has an $\mathcal{N}$-minor if $M$ has an $N$-minor for some $N \in \mathcal{N}$.
If $M\ba x$ has an $\mathcal{N}$-minor, then $x$ is $\mathcal{N}$-\textit{deletable}.
If $M/x$ has an $\mathcal{N}$-minor, then $x$ is $\mathcal{N}$-\textit{contractible}.
If neither $M\ba x$ nor $M/x$ has an $\mathcal{N}$-minor, then $x$ is $\mathcal{N}$-\textit{essential}.
If $x$ is both $\mathcal{N}$-deletable and $\mathcal{N}$-contractible, then we say that $x$ is \textit{$\mathcal{N}$-flexible}.
A matroid $M$ is \textit{$\mathcal{N}$-fragile} if $M$ has an $\mathcal{N}$-minor, and no element of $M$ is $\mathcal{N}$-flexible
(note that sometimes this is referred to in the literature as ``strictly $\mathcal{N}$-fragile'').
For $X \subseteq E(M)$, we also say that $X$ is \emph{$\mathcal{N}$-deletable} (or \emph{$\mathcal{N}$-contractible}) when $M \del X$ (or $M / X$, respectively) has an $\mathcal{N}$-minor.
When $\mathcal{N} = \{N\}$, we use the prefix ``$N$-'' for these terms, rather than ``$\{N\}$-''.

The next lemma is well known, and the subsequent lemma is straightforward.

\begin{lemma}[see {\cite[Proposition~4.3]{MvZW10}}, for example]
  \label{minor3conn}
  Let $M$ be a matroid with a $2$-separation $(X,Y)$, and let $N$ be a $3$-connected minor of $M$.
  Then $|U \cap E(N)| \le 1$ for some $\{U,V\} = \{X,Y\}$.
  Moreover, 
  \begin{enumerate}
    \item if $u \in U - \cl(V)$, then $u$ is $N$-contractible; and
    \item if $u \in U - \cocl(V)$, then $u$ is $N$-deletable.
  \end{enumerate}
\end{lemma}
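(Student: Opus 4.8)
The plan is to obtain the first assertion from the monotonicity of the connectivity function under minors, and to reduce the ``moreover'' part to a single statement by duality, which I would then settle using the $2$-sum structure underlying a $2$-separation.

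For the first assertion, set $A=X\cap E(N)$ and $B=Y\cap E(N)$, so $(A,B)$ partitions $E(N)$. The key inequality is $\lambda_N(A)\le\lambda_M(X)\le 1$. To prove it, build $N$ from $M$ one element at a time: a short rank computation shows that for every matroid $M'$, every $S\subseteq E(M')$ and every $e\in E(M')-S$ one has $\lambda_{M'\del e}(S)\le\min\{\lambda_{M'}(S),\,\lambda_{M'}(S\cup e)\}$ and $\lambda_{M'/e}(S)\le\min\{\lambda_{M'}(S),\,\lambda_{M'}(S\cup e)\}$. Now delete or contract the elements of $E(M)-E(N)$ one by one in any order, maintaining at the $i$th stage the set $S_i$ obtained from $X$ by discarding the elements already removed; each step satisfies $\lambda_{M_{i+1}}(S_{i+1})\le\lambda_{M_i}(S_i)$, using the $\lambda(S\cup e)$-bound when the element removed lies in $X$ and the $\lambda(S)$-bound when it lies in $Y$. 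Hence $\lambda_N(A)\le\lambda_M(X)\le 1$. As $N$ is $3$-connected it has no $2$-separation, so we cannot have both $|A|\ge 2$ and $|B|\ge 2$; relabelling if necessary, take $U=X$ and $V=Y$ with $|U\cap E(N)|\le 1$.

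For the ``moreover'' part, replacing $(M,N)$ by $(M^*,N^*)$ interchanges $\cl$ with $\cocl$ and $N$-contractibility with $N$-deletability while preserving the hypotheses and the choice of $U$, so it suffices to prove (i). Suppose $u\in U-\cl_M(V)$ and write $N=M/C\del D$. If $u\in C$ then $M/u$ has the minor $M/u/(C-u)\del D=N$, so assume $u\notin C$. The $2$-separation $(U,V)$ is either a $1$-separation, in which case $M$ is a direct sum and the argument below is easier, or it corresponds to a $2$-sum $M=M_U\oplus_2 M_V$ with basepoint $p$ and $E(M_V)=V\cup\{p\}$, so that $M\del U=M_V\del p$ and $M/U=M_V/p$. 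Consider first the case $U\cap E(N)=\emptyset$, so $E(N)\subseteq V$: then $M/(C\cap U)\del(D\cap U)$ is a minor of $M$ on ground set $V$ which, by the $2$-sum (or direct-sum) structure, equals $M/U$ or $M\del U$; hence $N$ is a minor of $M/U$ or of $M\del U$. In the former case $N\preceq M/U=M/u/(U-u)\preceq M/u$. In the latter case, since $u\notin\cl_M(V)$ the element $u$ is a coloop of $M\del(U-u)$, so $M/u\del(U-u)=M\del(U-u)/u=M\del U$ has an $N$-minor, and therefore so does $M/u$.

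The harder case is $|U\cap E(N)|=1$, say $U\cap E(N)=\{v\}$, where $E(N)$ straddles the $2$-sum. If $v\in\cl_M(V)$ then either $v$ lies in the guts of $(U,V)$, in which case $(U-v,\,V\cup v)$ is again a $2$-separation with no element of $E(N)$ on the small side (and $u\ne v$, $u\notin\cl_M(V\cup v)$), returning us to the previous case; or $\lambda_M(U-v)=0$, and a direct-sum argument together with $u\notin\cl_M(V)$ (which again makes $u$ a coloop of the relevant deletion) finishes it. So assume $v\notin\cl_M(V)$. Here I would induct on $|E(M)|$: since $N$ is $3$-connected it has no loops or coloops, and, choosing a presentation $N=M/C\del D$ with $C$ independent and $D$ coindependent, the absence of a coloop at $v$ forces $v\in\cl_M(V\cup(C\cap U))$, whence $(C\cap U)-\cl_M(V)$ contains an element $z$. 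Deleting or contracting $z$ according to its role then yields a strictly smaller matroid with a $2$-separation of the same type, still with at most one element of $E(N)$ on the small side, to which the inductive hypothesis applies. The delicate point---and the main obstacle---is to ensure that the hypothesis on $u$ survives this reduction, that is, to choose $z$ so that $u\notin\cl_M(V\cup z)$ (or, when that fails, to restructure the argument via the dual statement (ii)); the degenerate small configurations, such as $|U|=2$ or $|E(N)|\le 3$, are handled by direct inspection.
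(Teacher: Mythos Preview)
The paper does not supply its own proof of this lemma; it is quoted as a known result with a reference to \cite[Proposition~4.3]{MvZW10}. So there is no argument in the paper to compare yours against, and I can only comment on your attempt on its own merits.

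Your proof of the first assertion, via monotonicity of the connectivity function under single-element deletions and contractions, is correct and is the standard argument. The duality reduction of (ii) to (i) is also fine, and your treatment of the case $U\cap E(N)=\emptyset$ via the $2$-sum $M=M_U\oplus_2 M_V$ is correct: any minor of $M$ on ground set $V$ is one of $M_V\backslash p$ or $M_V/p$, and the hypothesis $u\notin\cl_M(V)$ makes $u$ a coloop of $M\backslash(U-u)$, so both possibilities yield an $N$-minor of $M/u$.

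The gap you yourself flag in the case $U\cap E(N)=\{v\}$ with $v\notin\cl_M(V)$ is real. Your proposed induction removes an element $z\in U$ and needs $u\notin\cl_{M'}(V)$ to persist in the smaller matroid $M'$; when $z\in C$ is contracted this can fail, and you do not show how to select $z$ to avoid it. A cleaner way to finish, still via the $2$-sum, is as follows. From the rank formula for $2$-sums one checks that for independent $I\subseteq U$ the restriction $(M/I)|V$ is $M_V/p$ or $M_V\backslash p$ according as $\sqcap_M(I,V)$ equals $1$ or $0$; applying this with $I=C\cap U$ and $I=(C\cap U)\cup\{v\}$ shows $N\backslash v$ and $N/v$ are $M_V\backslash p$ and $M_V/p$, so $N\cong M_V$ with $v\leftrightarrow p$. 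It therefore suffices to show $M_V$ is (up to relabelling $p$) a minor of $M/u$. Since $u\notin\cl_M(V)$, the basepoint $p$ is not a loop of $M_U/u$, and since $p$ is not a coloop of $M_U$ one computes $p\in\cl_{M_U/u}(U-u)$, so $p$ is not a coloop of $M_U/u$ either; hence $M/u=(M_U/u)\oplus_2 M_V$ is again a (non-degenerate) $2$-sum with the same $M_V$, and $M_V$ is a minor of it. This replaces the delicate inductive step and closes the gap.
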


\begin{lemma}
  \label{niceVertSep}
  Let $(X, z, Y)$ be a vertical $3$-separation of a $3$-connected matroid $M$, and let $N$ be a $3$-connected minor of $M/z$. Then there exists a vertical $3$-separation $(X', z, Y')$ such that $|X' \cap E(N)| \le 1$ and $Y' \cup z$ is closed.
\end{lemma}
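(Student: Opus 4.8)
The plan is to contract $z$, use \cref{minor3conn} to locate $E(N)$ on one side of the resulting $2$-separation, and then take a closure to produce the required closed side, verifying at the end that connectivity is preserved.

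First, $(X,Y)$ is a $2$-separation of $M/z$: since $z$ is in the guts of $(X,Y)$ we have $r(X\cup z)=r(X)$ and $r(Y\cup z)=r(Y)$, and $z\in\cl_M(Y)$, so $\lambda_{M/z}(X)=\lambda_M(X\cup z)-1=1$, while $|X|,|Y|\ge 3$. Applying \cref{minor3conn} to $M/z$ and this $2$-separation gives $|U\cap E(N)|\le 1$ for some $\{U,V\}=\{X,Y\}$; since the defining conditions for a vertical $3$-separation are symmetric in its two sides, $(Y,z,X)$ is also a vertical $3$-separation of $M$, so we may assume $|X\cap E(N)|\le 1$. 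Now put $Y'\cup z=\cl_M(Y\cup z)$ and $X'=E(M)\setminus\cl_M(Y\cup z)$. Then $Y'\cup z$ is closed, $Y\subseteq Y'$, and $X'\subseteq X$, so $|X'\cap E(N)|\le|X\cap E(N)|\le 1$.

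It remains to check that $(X',z,Y')$ is a vertical $3$-separation of $M$, and this is the part requiring care. Since taking a closure cannot increase the connectivity function, $\lambda_M(Y'\cup z)\le\lambda_M(Y\cup z)=2$. For equality, note that $r(M)=r(X)+r(Y)-2\ge 4$, so $M$ has at least four elements and hence no loops or coloops, and $|Y'\cup z|\ge 4$; if $\lambda_M(Y'\cup z)\le 1$ held, $3$-connectivity would force $|X'|\le 1$, and a short rank count using $r(X)\ge 3$ then gives a contradiction. Thus $\lambda_M(Y'\cup z)=2$. Comparing $\lambda_M(X')=\lambda_M(\cl_M(Y\cup z))=2$ with $\lambda_M(X)=2$, and using $r(\cl_M(Y\cup z))=r(Y\cup z)$, yields $r(X')=r(X)$. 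As $X'\subseteq X$ and $z\in\cl_M(X)$, this forces $r(X'\cup z)=r(X')$, so $z\in\cl_M(X')$; also $z\in\cl_M(Y)\subseteq\cl_M(Y')$, so $z$ is in the guts of $(X',Y')$. Finally $r(X')=r(X)\ge 3$, $r(Y'\cup z)=r(Y\cup z)\ge 3$, $|X'|\ge r(X')\ge 3$, and $|Y'|\ge|Y|\ge 3$, while $\lambda_M(X')=\lambda_M(X'\cup z)=\lambda_M(Y'\cup z)=2$; hence $(X',z,Y')$ is a vertical $3$-separation with the required properties.

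The step I expect to demand the most care is the connectivity bookkeeping in the last paragraph: one must first secure $|X'|\ge 2$ (equivalently, rule out $\lambda_M(Y'\cup z)\le 1$) so that $\lambda_M(Y'\cup z)=2$, and then recover $z\in\cl_M(X')$ from the rank identity $r(X')=r(X)$ rather than by attempting to track the guts through the closure operation directly.
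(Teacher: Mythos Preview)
The paper does not actually prove this lemma; it is stated immediately after \cref{minor3conn} with only the remark that it ``is straightforward.'' Your argument is correct and is the natural way to verify the statement: pass to the $2$-separation $(X,Y)$ of $M/z$, invoke \cref{minor3conn} to put $E(N)$ essentially on one side, close $Y\cup z$, and check that the resulting partition is still a vertical $3$-separation with $z$ in the guts.

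One cosmetic remark on the connectivity bookkeeping: your phrase ``a short rank count using $r(X)\ge 3$'' slightly undersells what is happening in the $|X'|=1$ case. There the rank inequalities only yield $r(X)=3$, not a contradiction; the contradiction is that the unique element of $X'$ lies outside $\cl_M(E(M)-X')$ and is therefore a coloop, which you have already ruled out. Since you explicitly noted ``no loops or coloops'' a line earlier, the intended argument is clear, but it would read more cleanly to say so directly rather than calling it a rank count. The $|X'|=0$ case is genuinely a rank count ($r(M)=r(Y)$ forces $r(X)=2$), so the two subcases are handled by slightly different mechanisms.
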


We also use the following well-known property of fragile matroids, which follows from \cref{minor3conn}.

\begin{lemma}[see {\cite[Proposition~4.4]{MvZW10}}, for example]
  \label{genfragileconn}
  Let $\mathcal{N}$ be a non-empty set of $3$-connected matroids with $|E(N)| \ge 4$ for each $N \in \mathcal{N}$.
  If $M$ is $\mathcal{N}$-fragile, then $M$ is $3$-connected up to series and parallel classes.
\end{lemma}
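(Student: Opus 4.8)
The plan is to first reduce to the case that $M$ is connected, and then to analyse an arbitrary $2$-separation of $M$ by combining \cref{minor3conn} with the hypothesis that $M$ has no $\mathcal{N}$-flexible element. Fix $N \in \mathcal{N}$ such that $M$ has an $N$-minor. For connectedness, suppose $M = M_1 \oplus M_2$ with $E(M_1)$ and $E(M_2)$ both non-empty. Since $N$ is $3$-connected with $|E(N)| \ge 2$ it is connected, so $N$ is a minor of $M_1$ or of $M_2$, say of $M_1$; but then, for any $e \in E(M_2)$, both $M \ba e$ and $M/e$ retain an $N$-minor inside the $M_1$-factor, so $e$ is $\mathcal{N}$-flexible, contradicting $\mathcal{N}$-fragility. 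Hence $M$ is connected, and in particular loopless and coloopless.

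Next I would record the following consequence of fragility: if $(X,Y)$ is a $2$-separation of $M$ with $|X \cap E(N)| \le 1$, then $X \subseteq \cl(Y) \cup \cocl(Y)$. Here $\lambda_M(X) = 1$ by connectedness, and if some $x \in X$ lay outside both $\cl(Y)$ and $\cocl(Y)$, then $x \in X - \cl(Y)$ would be $N$-contractible by \cref{minor3conn}(i) and $x \in X - \cocl(Y)$ would be $N$-deletable by \cref{minor3conn}(ii), making $x$ an $\mathcal{N}$-flexible element, a contradiction.

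The heart of the argument is an induction on $|X|$ showing that whenever $(X,Y)$ is a $2$-separation of $M$ with $|X \cap E(N)| \le 1$, the set $X$ is contained in a parallel class or a series class of $M$. For $|X| = 2$, the equation $\lambda_M(X) = 1$, together with $M$ being loopless and coloopless, forces $X$ to be a parallel pair or a series pair. Now suppose $|X| \ge 3$. If $X \subseteq \cl(Y)$, then $X \subseteq \cl(X) \cap \cl(Y)$, which by submodularity is a flat of rank at most $\lambda_M(X) = 1$ and hence (as $M$ is loopless) a parallel class of $M$; the case $X \subseteq \cocl(Y)$ is dual. Otherwise, there are distinct elements $p \in X \cap (\cl(Y) \setminus \cocl(Y))$ and $q \in X \cap (\cocl(Y) \setminus \cl(Y))$. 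As $p \in \cl(Y)$, a short rank computation shows $(X - p, Y \cup p)$ is again a $2$-separation of $M$ with $|(X - p) \cap E(N)| \le 1$; dually, $(X - q, Y \cup q)$ is a $2$-separation with $|(X - q) \cap E(N)| \le 1$. By induction, each of $X - p$ and $X - q$ lies in a parallel class or a series class of $M$, and since $|X| \ge 3$ these two sets share an element $z$. If one lay in a parallel class and the other in a series class, then $z$ would lie in a circuit contained in one of the sets and in a cocircuit contained in the other; circuit--cocircuit orthogonality would force these to coincide as a pair $\{z, z'\}$, which would then be a $2$-element separator of $M$, contradicting connectedness. So the two classes are of the same type, and $X = (X - p) \cup (X - q)$ is contained in a single parallel class or series class.

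Applying this to an arbitrary $2$-separation $(X,Y)$ of $M$, with $|X \cap E(N)| \le 1$ arranged by \cref{minor3conn}, shows that one side of every $2$-separation is (contained in) a series or parallel class of $M$ — the complementary side carries most of a $3$-connected $N$-minor on at least four elements and so has rank and corank at least $2$, hence is neither — which is exactly the statement that $M$ is $3$-connected up to series and parallel classes. I expect the main obstacle to be the ``mixed'' inductive case, where $X$ meets both $\cl(Y)$ and $\cocl(Y)$: ruling out that $X$ is partly parallel-type and partly series-type is precisely where circuit--cocircuit orthogonality, together with the connectedness of $M$, is essential.
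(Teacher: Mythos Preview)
The paper does not actually prove this lemma; it cites \cite[Proposition~4.4]{MvZW10} and remarks only that it ``follows from \cref{minor3conn}''. Your argument is a correct fleshing-out of exactly that remark: the key step is precisely the appeal to \cref{minor3conn} to show that any element of the small side of a $2$-separation lying outside both $\cl(Y)$ and $\cocl(Y)$ would be $\mathcal{N}$-flexible.

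One small observation on your inductive Case~C. You worry about the ``same type'' sub-case, but in fact it cannot occur: since $q \in \cocl(Y)$ with $q \notin Y$, we have $q \notin \cl(X-q)$, so $q$ has no parallel partner in $X$ and hence $X-p$ (which contains $q$) cannot lie in a parallel class; dually, $p \in \cl(Y)$ forces $p \notin \cocl(X-p)$, so $X-q$ cannot lie in a series class. Thus in Case~C one is \emph{always} in the mixed sub-case, and your orthogonality argument there yields the contradiction directly. So Case~C is in fact vacuous, and the induction collapses to the clean dichotomy $X \subseteq \cl(Y)$ or $X \subseteq \cocl(Y)$. This does not affect correctness, but it does mean the part of your write-up you flagged as the ``main obstacle'' is lighter than you expected.
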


\subsection*{Representation theory}

A \textit{partial field} is a pair $(R, G)$, where $R$ is a commutative ring with unity, and $G$ is a subgroup of the group of units of $R$ such that $-1 \in G$.
If $\mathbb{P}=(R,G)$ is a partial field, then we say $p$ is in $\mathbb{P}$, and write $p\in \mathbb{P}$, whenever $p\in G\cup \{0\}$.

Let $\mathbb{P}$ be a partial field, and let $A$ be an $X\times Y$ matrix with entries from $\mathbb{P}$, where $X$ and $Y$ are disjoint sets.
Then $A$ is a $\mathbb{P}$-\textit{matrix} if every subdeterminant of $A$ is in $\mathbb{P}$.
If $X'\subseteq X$ and $Y'\subseteq Y$, then we write $A[X',Y']$ to denote the submatrix of $A$ induced by $X'$ and $Y'$.
When $Z\subseteq X\cup Y$, we denote by $A[Z]$ the submatrix induced by $X\cap Z$ and $Y\cap Z$, and we denote by $A-Z$ the submatrix induced by $X-Z$ and $Y-Z$.
 
\begin{theorem}[{\cite[Theorem 2.8]{PvZ10b}}]
\label{pmatroid}
Let $\mathbb{P}$ be a partial field, and let $A$ be an $X\times Y$ $\mathbb{P}$-matrix, where $X$ and $Y$ are disjoint sets. Let
\begin{equation*}
  \mathcal{B}=\{X\}\cup \{X\triangle Z : |X\cap Z|=|Y\cap Z| \textrm{ and } \det(A[Z])\neq 0\}. 
\end{equation*}
 Then $\mathcal{B}$ is the set of bases of a matroid on $X\cup Y$.
\end{theorem}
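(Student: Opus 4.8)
The plan is to realise $\mathcal B$ as the set of size-$|X|$ column sets of the matrix $\widehat A := [I_X\,|\,A]$, with columns indexed by $X\cup Y$, whose associated maximal minor does not vanish, and then to verify the basis axioms directly, using that every non-zero subdeterminant of a $\mathbb P$-matrix is a \emph{unit} of $R$. For the translation: given $B\subseteq X\cup Y$ with $|B|=|X|$, let $\widehat A_B$ be the square submatrix of $\widehat A$ on columns $B$, and put $Z:=X\triangle B$. Then, using that $X$ and $Y$ are disjoint and $B\subseteq X\cup Y$, we have $|X\cap Z|=|X\setminus B|=|X|-|X\cap B|=|B\cap Y|=|Y\cap Z|$, so the subsets of $X\cup Y$ of size $|X|$ are exactly the sets $X\triangle Z$ with $|X\cap Z|=|Y\cap Z|$; moreover a Laplace expansion of $\det(\widehat A_B)$ along the identity columns in $X\cap B$ gives $\det(\widehat A_B)=\pm\det(A[Z])$, with $\det(A[\emptyset])=1$ corresponding to $B=X$. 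Hence $\mathcal B=\{\,B\subseteq X\cup Y : |B|=|X|\text{ and }\det(\widehat A_B)\neq 0\,\}$.

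To check the basis axioms, non-emptiness is clear since $X\in\mathcal B$. For the exchange axiom I would take $B_1,B_2\in\mathcal B$ and $x\in B_1\setminus B_2$ and apply to $\widehat A$ the Grassmann--Plücker relation associated with the $(|X|-1)$-set $B_1\setminus x$ and the $(|X|+1)$-set $B_2\cup x$ (note $x\notin B_2$, so this set has size $|X|+1$). Isolating the term indexed by $x$, this becomes an identity
\[
  \det(\widehat A_{B_1})\,\det(\widehat A_{B_2})=\sum_{y\in B_2\setminus B_1}\varepsilon_y\,\det\bigl(\widehat A_{(B_1\setminus x)\cup y}\bigr)\,\det\bigl(\widehat A_{(B_2\setminus y)\cup x}\bigr)
\]
with each $\varepsilon_y\in\{-1,1\}$, and this identity is valid over any commutative ring since it is a polynomial identity in the matrix entries. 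Now, because $A$ is a $\mathbb P$-matrix, each $\det(\widehat A_{B_i})$ lies in $G\cup\{0\}$, and being non-zero it lies in $G$; hence the left-hand side is a unit of $R$, in particular non-zero. Therefore some summand on the right is non-zero, and since a product in a commutative ring vanishes whenever either factor does, we obtain $\det(\widehat A_{(B_1\setminus x)\cup y})\neq 0$, i.e.\ $(B_1\setminus x)\cup y\in\mathcal B$, for some $y\in B_2\setminus B_1$. This is the basis exchange axiom, and so $\mathcal B$ is the set of bases of a matroid on $X\cup Y$, as required.

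The crux — and the one place where the $\mathbb P$-matrix hypothesis is genuinely used — is this last deduction: over an arbitrary commutative ring the Grassmann--Plücker identity alone does not yield basis exchange, because a sum, or a product, of non-zero ring elements may vanish; it is precisely the fact that non-zero subdeterminants of a $\mathbb P$-matrix are units of $R$ that rescues the argument. The remaining ingredients — the sign bookkeeping in the Laplace expansion of the translation step, and pinning down the exact form and ring-theoretic validity of the Grassmann--Plücker relation invoked — are routine.
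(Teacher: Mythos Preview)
The paper does not prove this theorem; it is quoted verbatim from \cite[Theorem~2.8]{PvZ10b} and used as a black box, so there is no in-paper argument to compare against. Your proof is correct and is essentially the standard one: the translation $\det(\widehat A_B)=\pm\det(A[Z])$ via Laplace expansion is right, and your use of the Grassmann--Pl\"ucker syzygy for the $(|X|-1)$-set $B_1\setminus x$ and the $(|X|+1)$-set $B_2\cup x$ is exactly the way this is usually done. You have also correctly identified where the partial-field hypothesis enters: without knowing that the two non-zero maximal minors on the left are \emph{units}, one could not conclude their product is non-zero in a ring with zero-divisors, and the exchange argument would collapse.
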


We say that the matroid in \cref{pmatroid} is $\mathbb{P}$-\textit{representable}, and that $A$ is a $\mathbb{P}$-\textit{representation} of $M$. We write $M=M[I|A]$ if $A$ is a $\mathbb{P}$-matrix, and $M$ is the matroid whose bases are described in \cref{pmatroid}. 

Let $\mathbb{P}$ be a partial field, let $A$ be an $X\times Y$ $\mathbb{P}$-matrix, for disjoint sets $X$ and $Y$, and let $x\in X$ and $y\in Y$ such that $A_{xy}\neq 0$. Then we define $A^{xy}$ to be the $(X\triangle \{x,y\})\times (Y\triangle \{x,y\})$ $\mathbb{P}$-matrix given by 
\begin{displaymath}
  (A^{xy})_{uv} =
\begin{cases}
    A_{xy}^{-1} \quad & \textrm{if } uv = yx\\
    A_{xy}^{-1} A_{xv} & \textrm{if } u = y, v\neq x\\
    -A_{xy}^{-1} A_{uy} & \textrm{if } v = x, u \neq y\\
    A_{uv} - A_{xy}^{-1} A_{uy} A_{xv} & \textrm{otherwise.}
\end{cases}
\end{displaymath}
We say that $A^{xy}$ is obtained from $A$ by \textit{pivoting} on $xy$.

Two $\mathbb{P}$-matrices are \textit{scaling equivalent} if one can be obtained from the other by repeatedly scaling rows and columns by non-zero elements of $\mathbb{P}$. Two $\mathbb{P}$-matrices are \textit{geometrically equivalent} if one can be obtained from the other by a sequence of the following operations: scaling rows and columns by non-zero entries of $\mathbb{P}$, permuting rows, permuting columns, and pivoting.
 
Let $\mathbb{P}$ be a partial field, and let $M$ and $N$ be matroids such that $N$ is a minor of $M$. Suppose that the ground set of $N$ is $X'\cup Y'$, where $X'$ is a basis of $N$. We say that $M$ is $\mathbb{P}$-\textit{stabilized by $N$} if, whenever $A_1$ and $A_2$ are $X\times Y$ $\mathbb{P}$-matrices with $X'\subseteq X$ and $Y'\subseteq Y$ such that
\begin{enumerate}
 \item[(i)] $M=M[I|A_1]=M[I|A_2]$,
 \item[(ii)] $A_1[X',Y']$ is scaling equivalent to $A_2[X',Y']$, and
 \item[(iii)] $N=M[I|A_1[X',Y']]=M[I|A_2[X',Y']],$ 
\end{enumerate}
then $A_1$ is scaling equivalent to $A_2$.
If $M$ is $\mathbb{P}$-stabilized by $N$, and every $\mathbb{P}$-representation of $N$ extends to a $\mathbb{P}$-representation of $M$, then we say $M$ is \emph{strongly $\mathbb{P}$-stabilized} by $N$.

Let $\mathcal{M}$ be a class of matroids. We say that $N$ is a \textit{$\mathbb{P}$-stabilizer for $\mathcal{M}$} if, for every $3$-connected $\mathbb{P}$-representable matroid $M\in \mathcal{M}$ with an $N$-minor, $M$ is $\mathbb{P}$-stabilized by $N$.
We say that $N$ is a \textit{strong $\mathbb{P}$-stabilizer for $\mathcal{M}$} if, for every $3$-connected $\mathbb{P}$-representable matroid $M\in \mathcal{M}$ with an $N$-minor, $M$ is strongly $\mathbb{P}$-stabilized by $N$.
Usually, we will be interested in the class of $\mathbb{P}$-representable matroids for some partial field~$\mathbb{P}$.
When $\mathcal{M}$ is the class of all $\mathbb{P}$-representable matroids, we
simply say ``$N$ is a strong $\mathbb{P}$-stabilizer''. 

\subsection*{Certifying non-representability}

Let $\mathbb{P}$ be a partial field.
Let $M$ be a matroid and let $E(M)=X \cup Y$ where $X$ and $Y$ are disjoint sets.
Let $A$ be an $X \times Y$ matrix with entries in $\mathbb{P}$ such that, for some distinct $a, b \in Y$, both $A-a$ and $A-b$ are $\mathbb{P}$-matrices, $M \del a=M[I|A-a]$, and $M \del b=M[I|A-b]$. 
Then we say $A$ is an $X \times Y$ \emph{companion $\mathbb{P}$-matrix} for $M$.

Let $B$ be a basis of $M$.
We write $B^*$ to denote $E(M)-B$.
Let $A$ be a $B\times B^*$ matrix with entries in $\mathbb{P}$. A subset~$Z$ of $E(M)$ \textit{incriminates} the pair $(M, A)$ if $A[Z]$ is square and one of the following holds: 
\begin{enumerate}
 \item[(i)] $\det(A[Z])\notin \mathbb{P}$,
 \item[(ii)] $\det(A[Z])=0$ but $B\triangle Z$ is a basis of $M$, or
 \item[(iii)] $\det(A[Z])\neq 0$ but $B\triangle Z$ is dependent in $M$.
\end{enumerate}

The next lemma follows immediately. 

\begin{lemma}
  Let $M$ be a matroid, let $A$ be an $X\times Y$ matrix with entries in a partial field $\mathbb{P}$, where $X$ and $Y$ are disjoint sets, and $X\cup Y=E(M)$. Exactly one of the following statements holds:
\begin{itemize}
 \item[(i)] $A$ is a $\mathbb{P}$-matrix and $M=M[I | A]$, or
 \item[(ii)] there is some $Z\subseteq X\cup Y$ that incriminates $(M, A)$.
\end{itemize}
\end{lemma}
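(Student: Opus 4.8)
The plan is to establish the two halves of the ``exactly one'' assertion in turn: that the two alternatives cannot both hold, and that at least one of them does. Beyond unwinding the definition of incrimination, the only tool needed is \cref{pmatroid}: whenever $A$ is a $\mathbb{P}$-matrix, I would use that, taking $B = X$, the set $X \triangle Z$ is a basis of $M[I|A]$ exactly when $A[Z]$ is square --- equivalently, $|X \cap Z| = |Y \cap Z|$ --- and $\det(A[Z]) \ne 0$, with $X$ itself arising from $Z = \emptyset$.

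\emph{Mutual exclusivity.} Suppose $A$ is a $\mathbb{P}$-matrix and $M = M[I|A]$. For any $Z \subseteq E(M)$ with $A[Z]$ square, $\det(A[Z])$ is a subdeterminant of the $\mathbb{P}$-matrix $A$ and hence lies in $\mathbb{P}$, so the first incriminating condition cannot hold. Also $X \triangle Z$ has size $r(M) = |X|$, so it is a basis of $M$ if and only if it is independent in $M$, and since $M = M[I|A]$ this happens if and only if $\det(A[Z]) \ne 0$; this rules out the second and third incriminating conditions. Hence no $Z$ incriminates $(M,A)$.

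\emph{At least one alternative holds.} Suppose alternative~(i) fails, so it is not the case that both $A$ is a $\mathbb{P}$-matrix and $M = M[I|A]$. If $A$ is not a $\mathbb{P}$-matrix, choose a square submatrix $A[X',Y']$ with $X' \subseteq X$ and $Y' \subseteq Y$ and $\det(A[X',Y']) \notin \mathbb{P}$; then $Z := X' \cup Y'$ satisfies $A[Z] = A[X',Y']$, so $Z$ incriminates $(M,A)$ via the first condition. Otherwise $A$ is a $\mathbb{P}$-matrix but $M \ne M[I|A]$. If $X$ is not a basis of $M$, then $Z := \emptyset$ incriminates via the third condition, since $\det(A[\emptyset]) = 1 \ne 0$ while $X = X \triangle \emptyset$ is dependent. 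If $X$ is a basis of $M$, then $M$ and $M[I|A]$ have the common rank $|X|$, so they must disagree on some subset $B'$ of size $|X|$ that is a basis of exactly one of them; put $Z := X \triangle B'$, so that $A[Z]$ is square and, by \cref{pmatroid}, $\det(A[Z]) \ne 0$ if and only if $B'$ is a basis of $M[I|A]$. Comparing this with whether $B'$ is a basis of $M$ --- equivalently, since $|B'| = r(M)$, whether $B'$ is independent in $M$ --- gives the second incriminating condition when $B'$ is a basis of $M$ but not of $M[I|A]$, and the third when the reverse holds. In every case some $Z$ incriminates $(M,A)$.

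The argument is essentially bookkeeping with \cref{pmatroid}; the only steps needing mild care are upgrading ``$B'$ is not a basis of $M$'' to ``$B'$ is dependent in $M$'' (which uses $|B'| = r(M)$) and keeping straight the correspondence between a candidate basis $B'$ and the set $Z = X \triangle B'$ that indexes the relevant square submatrix. The case where $X$ is not a basis of $M$ --- not literally covered by the definition of incrimination, which presupposes that $B$ is a basis --- is the only point where an extra observation ($Z = \emptyset$) is needed, and I do not expect any real obstacle.
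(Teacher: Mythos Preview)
The paper does not prove this lemma; it states only that it ``follows immediately.'' Your argument is a correct and careful expansion of why it follows, and in the main case---where $X$ is a basis of $M$, the only setting in which the paper actually defines incrimination---your reasoning is sound and matches the intended (trivial) proof.

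One small point on the case you flag at the end: if $X$ is not a basis of $M$, your claim that $Z = \emptyset$ incriminates via condition~(iii) requires $X$ to be \emph{dependent}, not merely a non-basis. If $|X| < r(M)$ and $X$ is independent (for instance $M = U_{2,3}$ with $X$ a singleton and $A$ the $1\times 2$ all-ones matrix), then every $X \triangle Z$ with $A[Z]$ square has size $|X| < r(M)$, hence is never a basis and need not be dependent, so no $Z$ incriminates $(M,A)$ even though $M \ne M[I|A]$. This is a wrinkle in the paper's presentation rather than a defect in your argument: since the definition of incrimination presupposes that the row-index set is a basis, the lemma should be read under that hypothesis, and then your extra case never arises.
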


Let $M$ be an excluded minor for the class of $\mathbb{P}$-representable matroids, for a partial field $\mathbb{P}$.
We will obtain a $B \times B^*$ companion $\mathbb{P}$-matrix $A$ for $M$ such that $\{x,y,a,b\}$ incriminates $(M,A)$ for some distinct $x,y \in B$ and $a,b \in B^*$.
In this setting, for $p \in B$ and $q \in B^*$ where $A_{pq} \neq 0$, we say that the pivot $A^{pq}$ is \emph{allowable} if $\{p,q\} \cap \{x,y,a,b\} \neq \emptyset$ and $\{x,y,a,b\} \triangle \{p,q\}$ incriminates $(M,A^{pq})$, or $\{p,q\} \cap \{x,y,a,b\} = \emptyset$ and $\{x,y,a,b\}$ incriminates $(M,A^{pq})$.
The next two lemmas describe situations where a pivot is allowable.

\begin{lemma}[{\cite[Lemma 5.10]{MvZW10}}]
  \label{allowablexyrow}
  Let $M$ be an excluded minor for the class of $\mathbb{P}$-representable matroids, for a partial field $\mathbb{P}$, and let $A$ be a $B\times B^{*}$ companion $\mathbb{P}$-matrix for $M$.
  Suppose that $\{x,y,a,b\}$ incriminates $(M,A)$, for pairs $\{x,y\}\subseteq B$ and $\{a,b\}\subseteq B^{*}$.
  If $p\in \{x,y\}$, $q\in B^{*}-\{a,b\}$, and $A_{pq}\neq 0$, then $A^{pq}$ is an allowable pivot.
\end{lemma}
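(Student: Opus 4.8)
The plan is to unwind the definition of ``allowable pivot'' and reduce it to a single $2\times 2$ determinantal identity. Since $p\in\{x,y\}\subseteq B$ and $q\in B^{*}-\{a,b\}$, we have $q\notin\{x,y,a,b\}$, so $\{p,q\}\cap\{x,y,a,b\}=\{p\}\neq\emptyset$; hence it suffices to show that $\{x,y,a,b\}\triangle\{p,q\}$ incriminates $(M,A^{pq})$. Take $p=x$ without loss of generality, so the target set is $\{y,a,b,q\}$. After the pivot the rows of $A^{xq}$ are indexed by $B':=B\triangle\{x,q\}$ and the columns by $B^{*}\triangle\{x,q\}$, so $A^{xq}[\{y,a,b,q\}]$ is the square submatrix with row set $\{y,q\}$ and column set $\{a,b\}$. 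The first easy fact to record is that the ``matroid side'' of the incrimination condition is unchanged: $B'\triangle\{y,a,b,q\}=B\triangle\{x,y,a,b\}$, so whether this set is a basis of, or dependent in, $M$ is exactly as given by the hypothesis that $\{x,y,a,b\}$ incriminates $(M,A)$.

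The core step is the identity
\[
\det\bigl(A^{xq}[\{y,a,b,q\}]\bigr)\;=\;\pm\,A_{xq}^{-1}\,\det\bigl(A[\{x,y,a,b\}]\bigr),
\]
which I would verify by substituting the four relevant entries of $A^{xq}$ from the pivot formula and simplifying: the two terms carrying the factor $A_{xq}^{-1}A_{yq}$ cancel, leaving $A_{xq}^{-1}$ times the original $2\times 2$ determinant, up to a sign that is immaterial here. Since $A_{xq}$ is a nonzero element of $\mathbb{P}$ and $-1\in G$, the scalar $\pm A_{xq}^{-1}$ is again a nonzero element of $\mathbb{P}$, so multiplying by it preserves each of the three conditions ``equal to $0$'', ``a nonzero element of $\mathbb{P}$'', and ``not in $\mathbb{P}$''. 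Thus the left-hand determinant satisfies exactly the same clause as the right-hand one; combining this with the equality $B'\triangle\{y,a,b,q\}=B\triangle\{x,y,a,b\}$ and splitting into the three cases (i)--(iii) of the definition of ``incriminates'' for $(M,A)$, we get the matching case for the set $\{y,a,b,q\}$ and the pair $(M,A^{xq})$. The case $p=y$ is symmetric, so $A^{pq}$ is an allowable pivot.

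One point deserving a line of care is that $A^{pq}$ is a legitimate input to the definition of ``incriminates'', i.e.\ that it has entries in $\mathbb{P}$ and that $A^{pq}-a$ and $A^{pq}-b$ are $\mathbb{P}$-matrices representing $M\del a$ and $M\del b$. Because $q\notin\{a,b\}$, pivoting on $pq$ commutes with deleting column $a$ (respectively $b$), so $A^{pq}-a=(A-a)^{pq}$ and $A^{pq}-b=(A-b)^{pq}$; since $A-a$ and $A-b$ are $\mathbb{P}$-matrices and pivoting preserves both the class of $\mathbb{P}$-matrices and the represented matroid, the claim follows, and in particular every entry of $A^{pq}$ lies in $\mathbb{P}$. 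I expect the only mild obstacle to be carefully tracking the row and column index sets through the pivot so that the determinantal identity is stated with the correct submatrix; beyond that, the argument is a short computation.
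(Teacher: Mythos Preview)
Your argument is correct: the key points are the equality $B'\triangle\{q,y,a,b\}=B\triangle\{x,y,a,b\}$ on the matroid side and the determinantal identity $\det(A^{xq}[\{q,y,a,b\}])=\pm A_{xq}^{-1}\det(A[\{x,y,a,b\}])$ on the matrix side, together with the observation that pivoting on $xq$ commutes with deleting column~$a$ or~$b$ so that $A^{pq}$ remains a companion $\mathbb{P}$-matrix. Note, however, that the paper does not give its own proof of this lemma; it is quoted verbatim from \cite[Lemma~5.10]{MvZW10}, so there is no in-paper argument to compare against---your direct computation is the standard proof.
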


\begin{lemma}[{\cite[Lemma 5.11]{MvZW10}}]
  \label{allowablenonxy}
  Let $M$ be an excluded minor for the class of $\mathbb{P}$-representable matroids, for a partial field $\mathbb{P}$, and let $A$ be a $B\times B^{*}$ companion $\mathbb{P}$-matrix for $M$.
  Suppose that $\{x,y,a,b\}$ incriminates $(M,A)$, for pairs $\{x,y\}\subseteq B$ and $\{a,b\}\subseteq B^{*}$.
  If $p\in B-\{x,y\}$, $q\in B^{*}-\{a,b\}$, $A_{pq}\neq 0$, and either $A_{pa}=A_{pb}=0$ or $A_{xq}=A_{yq}=0$, then $A^{pq}$ is an allowable pivot.
\end{lemma}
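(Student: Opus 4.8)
The plan is to show that $A^{pq}$ is a $B'\times(B')^{*}$ companion $\mathbb{P}$-matrix for $M$, where $B':=B\triangle\{p,q\}$, and that $\{x,y,a,b\}$ incriminates $(M,A^{pq})$; since $p\notin\{x,y\}$ and $q\notin\{a,b\}$ we have $\{p,q\}\cap\{x,y,a,b\}=\emptyset$, so the latter is exactly what is needed for $A^{pq}$ to be allowable. Two preliminaries do most of the work. First, since $q\neq a,b$, pivoting on $pq$ commutes with deleting column $a$ or column $b$, so $A^{pq}-a=(A-a)^{pq}$ and $A^{pq}-b=(A-b)^{pq}$; as a pivot of a $\mathbb{P}$-matrix is a $\mathbb{P}$-matrix representing the same matroid, and $B'$ is a basis of $M$ (because $A_{pq}\neq 0$), it follows that $A^{pq}$ is a $B'\times(B')^{*}$ companion $\mathbb{P}$-matrix for $M$ with the same distinguished pair $\{a,b\}$; in particular every entry of $A^{pq}$ lies in $\mathbb{P}$. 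Second, since $A-a$ is a $\mathbb{P}$-representation of $M\del a$ with basis $B$ and $a\notin B$, the circuits of $M$ inside $B\cup e$ agree with those of $M\del a$ for every $e\neq a$; hence the fundamental circuit $C_M(e,B)$ equals $\{e\}\cup\{f\in B:A_{fe}\neq 0\}$ for every $e\in B^{*}\setminus\{a\}$, and symmetrically using $A-b$ when $e=a$. Consequently $p\in C_M(q,B)$, and the hypothesis yields either $p\notin C_M(a,B)\cup C_M(b,B)$ (first case) or $\{x,y\}\cap C_M(q,B)=\emptyset$ (second case).

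Since $p\notin\{x,y\}$ and $q\notin\{a,b\}$, the $\{x,y,a,b\}$-submatrix of $A^{pq}$ is still indexed by rows $x,y$ and columns $a,b$, and the pivot formula gives $(A^{pq})_{uv}=A_{uv}-A_{pq}^{-1}A_{uq}A_{pv}$ on it; in either case the correction term vanishes ($A_{pv}=0$ for $v\in\{a,b\}$, or $A_{uq}=0$ for $u\in\{x,y\}$), so $A^{pq}[\{x,y,a,b\}]=A[\{x,y,a,b\}]$. As these submatrices are equal, $\{x,y,a,b\}$ incriminates $(M,A^{pq})$ via condition (i) exactly when it does so for $(M,A)$, and for conditions (ii) and (iii) the only possible discrepancy is whether $B'\triangle\{x,y,a,b\}$, rather than $B\triangle\{x,y,a,b\}$, is a basis of $M$. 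Writing $C=B-\{x,y,p\}$ we have $B\triangle\{x,y,a,b\}=C\cup\{a,b,p\}$ and $B'\triangle\{x,y,a,b\}=B\triangle\{x,y,a,b,p,q\}=C\cup\{a,b,q\}$, both of size $r(M)$, so it suffices to prove that $C\cup\{a,b,p\}$ is a basis of $M$ if and only if $C\cup\{a,b,q\}$ is.

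I would do this via closures. In the first case, $p\notin C_M(a,B)$ gives $C_M(a,B)\subseteq\{a\}\cup(B-p)$, hence $a\in\cl_M(B-p)$, and likewise $b\in\cl_M(B-p)$; since $C\subseteq B-p$, this gives $\cl_M(C\cup\{a,b\})\subseteq\cl_M(B-p)$. But $p\notin\cl_M(B-p)$ as $B$ is independent, and $q\notin\cl_M(B-p)$ because $p\in C_M(q,B)$ forces $(B-p)\cup q$ to be independent; so neither $p$ nor $q$ lies in $\cl_M(C\cup\{a,b\})$, whence each of $C\cup\{a,b,p\}$ and $C\cup\{a,b,q\}$ is a basis of $M$ precisely when $C\cup\{a,b\}$ is independent. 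In the second case, $\{x,y\}\cap C_M(q,B)=\emptyset$ gives $C_M(q,B)\subseteq\{q\}\cup C\cup\{p\}$, which with $p\in C_M(q,B)$ forces $\cl_M(C\cup\{p\})=\cl_M(C\cup\{q\})$ and hence $\cl_M(C\cup\{a,b,p\})=\cl_M(C\cup\{a,b,q\})$, so these two sets have the same rank and are bases of $M$ simultaneously or not (alternatively, this case is dual to the first, via $M^{*}$). In both cases the required equivalence holds, so $\{x,y,a,b\}$ incriminates $(M,A^{pq})$ and the pivot is allowable.

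The main obstacle is this last comparison of bases. The determinant part is immediate once one notices the hypothesis pins $A^{pq}[\{x,y,a,b\}]$ to $A[\{x,y,a,b\}]$; but $\{x,y,a,b\}$ is exactly where $A$ fails to represent $M$, so whether $B\triangle\{x,y,a,b\}$ or $B'\triangle\{x,y,a,b\}$ is a basis of $M$ cannot be read off a subdeterminant. The trick is to realise that both are obtained by adjoining a single element — $p$ or $q$ — to the common independent set $C\cup\{a,b\}$, reducing the problem to the position of $p$ and $q$ relative to the flat $\cl_M(B-p)$, and to use that the companion-matrix hypotheses, silent as they are about $\{x,y,a,b\}$, nonetheless determine the fundamental circuits $C_M(a,B)$, $C_M(b,B)$ and $C_M(q,B)$.
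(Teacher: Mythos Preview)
The paper does not prove this lemma; it is quoted verbatim from \cite[Lemma~5.11]{MvZW10} and stated without proof. So there is no ``paper's own proof'' to compare against. That said, your argument is correct and is essentially the standard one.

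A couple of minor remarks. Your claim that every entry of $A^{pq}$ lies in $\mathbb{P}$ deserves a word of justification, since $\mathbb{P}$ is only a partial field and the pivot formula involves a subtraction: the point is that each entry of $A^{pq}$ outside row~$q$ and column~$p$ is an entry of either $(A-a)^{pq}$ or $(A-b)^{pq}$, and these are $\mathbb{P}$-matrices. You use this implicitly when you say $A^{pq}$ is a companion $\mathbb{P}$-matrix, so it does no harm to make it explicit. Your reading of the fundamental circuits $C_M(a,B)$, $C_M(b,B)$, $C_M(q,B)$ from the columns of $A$ is exactly right: column~$a$ is faithful because it sits inside the $\mathbb{P}$-representation $A-b$ of $M\del b$, and similarly for $b$ and $q$. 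The closure argument in Case~1 and the circuit-exchange argument in Case~2 are both clean; the parenthetical remark that Case~2 is dual to Case~1 is also correct, since transposing $A$ gives a companion matrix for $M^*$ with the roles of $\{x,y\}$ and $\{a,b\}$, and of rows and columns, interchanged.
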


\subsection*{Delta-wye exchange}

Let $M$ be a matroid with a triangle $T=\{a,b,c\}$.
Consider a copy of $M(K_4)$ having $T$ as a triangle with $\{a',b',c'\}$ as the complementary triad labelled such that $\{a,b',c'\}$, $\{a',b,c'\}$ and $\{a',b',c\}$ are triangles.
Let $P_{T}(M,M(K_4))$ denote the generalised parallel connection of $M$ with this copy of $M(K_4)$ along the triangle $T$.
Let $M'$ be the matroid $P_{T}(M,M(K_4))\backslash T$ where the elements $a'$, $b'$ and $c'$ are relabelled as $a$, $b$, and $c$ respectively.
The matroid $M'$ is said to be obtained from $M$ by a \emph{\dY\ exchange} on the triangle~$T$, and is denoted $\Delta_T(M)$.
Dually, $M''$ is obtained from $M$ by a \emph{\Yd\ exchange} on the triad $T^*=\{a,b,c\}$ if $(M'')^*$ is obtained from $M^*$ by a \dY\ exchange on $T^*$.  The matroid $M''$ is denoted $\nabla_{T^*}(M)$.

We say that a matroid $M_1$ is \emph{\dY-equivalent} to a matroid $M_0$ if $M_1$ can be obtained from $M_0$ by a sequence of \dY\ and \Yd\ exchanges on coindependent triangles and independent triads, respectively.
We let $\Delta^*(M)$ denote the set of matroids that are \dY-equivalent to $M$ or $M^*$.

Oxley, Semple, and Vertigan proved that the set of excluded minors for $\mathbb{P}$-representability is closed under \dY\ exchange.
\begin{proposition}[{\cite[Theorem~1.1]{OSV00}}]
  \label{osvdelta}
  Let $\mathbb{P}$ be a partial field, and let $M$ be an excluded minor for the class of $\mathbb{P}$-representable matroids.
  If $M' \in \Delta^*(M)$, then $M'$ is an excluded minor for the class of $\mathbb{P}$-representable matroids.
\end{proposition}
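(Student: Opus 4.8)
The plan is to follow the strategy of Oxley, Semple and Vertigan: reduce to a single \dY\ exchange on a coindependent triangle, show that such an exchange preserves both $\mathbb{P}$-representability and non-$\mathbb{P}$-representability, and then control which proper minors can appear.

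\emph{Reduction.} The class of $\mathbb{P}$-representable matroids is closed under minors, and also under duality: if $A$ is a $\mathbb{P}$-matrix with $M = M[I\,|\,A]$, then $[-A^{\mathsf{T}}\,|\,I]$ is a $\mathbb{P}$-matrix representing $M^*$ (each of its subdeterminants equals, up to a sign lying in $\mathbb{P}$, a subdeterminant of $A$). Hence the set of excluded minors for $\mathbb{P}$-representability is closed under duality. Since $\nabla_T(M) = (\Delta_T(M^*))^*$, and $T$ is a coindependent triangle of $M^*$ precisely when it is an independent triad of $M$, it therefore suffices to prove: \emph{if $M$ is an excluded minor for $\mathbb{P}$-representability and $T$ is a coindependent triangle of $M$, then $\Delta_T(M)$ is an excluded minor.} The general statement then follows by replacing $M$ with $M^*$ if necessary and applying this claim (inserting a dualisation before and after each \Yd\ step) along a \dY-sequence witnessing $M' \in \Delta^*(M)$.

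\emph{$\Delta_T(M)$ is not $\mathbb{P}$-representable.} First I would record a one-directional fact: for \emph{any} matroid $M_0$ having a triangle $T$, if $M_0$ is $\mathbb{P}$-representable then so is $\Delta_T(M_0) = P_T(M_0,M(K_4)) \ba T$. This holds because $M(K_4)$ is regular, hence has a totally unimodular representation that is a $\mathbb{P}$-matrix, and $T$ is a modular line of $M(K_4)$ (in a rank-$3$ matroid any two lines meet, so every line is modular); a direct matrix computation then glues a $\mathbb{P}$-representation of $M_0$ to that of $M(K_4)$ along $T$ to obtain a $\mathbb{P}$-representation of the generalised parallel connection, and deleting $T$ preserves $\mathbb{P}$-representability. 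Dually, if $M_0'$ has a triad $T$ and is $\mathbb{P}$-representable, then so is $\nabla_T(M_0')$. Now let $M$ be an excluded minor with coindependent triangle $T$; then $T$ is a triad of $\Delta_T(M)$ and $\nabla_T(\Delta_T(M)) = M$, so if $\Delta_T(M)$ were $\mathbb{P}$-representable, then $M$ would be, a contradiction.

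\emph{Every proper minor of $\Delta_T(M)$ is $\mathbb{P}$-representable.} As $\mathbb{P}$-representability is minor-closed, it suffices to show that $\Delta_T(M) \ba e$ and $\Delta_T(M)/e$ are $\mathbb{P}$-representable for every $e \in E(\Delta_T(M))$. Write $E(\Delta_T(M)) = (E(M)-T) \cup T$, where the relabelled triangle $T$ is now a triad of $\Delta_T(M)$. If $e \notin T$, then deletion commutes with the construction, so $\Delta_T(M) \ba e = \Delta_T(M \ba e)$; and if in addition $e \notin \cl_M(T)$, then likewise $\Delta_T(M)/e = \Delta_T(M/e)$. In these cases $M \ba e$ and $M/e$ are proper, hence $\mathbb{P}$-representable, minors of $M$, so the forward implication of the previous paragraph applies. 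The remaining configurations --- namely $e \in T$, and $e \in \cl_M(T) - T$ with the operation being contraction --- are handled by computing directly inside $P_T(M,M(K_4))$: in each case one verifies that $\Delta_T(M) \ba e$ or $\Delta_T(M)/e$ is isomorphic either to a minor of $M$ or to $\Delta_S(N)$ or $\nabla_S(N)$ for some minor $N$ of $M$ and triangle or triad $S$ of $N$, and hence is $\mathbb{P}$-representable, again using that proper minors of $M$ are $\mathbb{P}$-representable. Combining this with the non-representability of $\Delta_T(M)$ shows that $\Delta_T(M)$ is an excluded minor, and the reduction finishes the proof.

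\emph{Main obstacle.} The work is concentrated in the last paragraph, in the corner cases where a minor operation destroys the triangle (equivalently, triad) $T$, so that the clean identities $P_T(M,M(K_4)) \ba e = P_T(M \ba e, M(K_4))$ and its contraction analogue fail and one must instead analyse $M(K_4)$ by hand; one also has to verify carefully that gluing in the $M(K_4)$-block yields a genuine $\mathbb{P}$-matrix over an arbitrary partial field rather than merely a field, which is exactly the point where modularity of the line $T$ in $M(K_4)$ enters.
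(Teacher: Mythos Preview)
The paper does not give its own proof of this proposition: it is quoted verbatim as \cite[Theorem~1.1]{OSV00} and used as a black box. Your outline follows the standard Oxley--Semple--Vertigan strategy (reduce to a single \dY\ on a coindependent triangle; use that $M(K_4)$ is regular so generalised parallel connection along the modular line~$T$ preserves $\mathbb{P}$-representability; recover $M$ via $\nabla_T$ to rule out representability of $\Delta_T(M)$; handle one-element minors case by case), and is a faithful sketch of that argument, with the genuine work correctly located in the corner cases where the removed element lies in~$T$ or in $\cl_M(T)$.
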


The following result seems well known but, as we were unable to find a reference, we include a proof for completeness.

\begin{lemma}
  \label{dyconn}
  Let $M$ be a $3$-connected matroid with a coindependent triangle $T$, and let $M'$ be the \dY\ exchange of $M$ on $T$.
  Then $M'$ is $3$-connected up to series pairs.
  Moreover, $M'$ is not $3$-connected if and only if $T$ is contained in a $4$-element fan of $M$.
\end{lemma}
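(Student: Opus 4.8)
The plan is to analyze the $\Delta$-$Y$ exchange via its definition as a generalised parallel connection with $M(K_4)$ followed by deletion of the original triangle. Write $T=\{a,b,c\}$ and let $N$ denote the copy of $M(K_4)$ on $\{a,b,c,a',b',c'\}$ used in the construction, so that $M' = P_T(M,N)\backslash T$ with $a',b',c'$ relabelled as $a,b,c$. Since $T$ is coindependent in $M$, the triad $T^*=\{a',b',c'\}$ of $N$ becomes a triad of $M'$ (this is standard, e.g.\ \cite{Oxley11}), and $r(M')=r(M)+1$. I would first record the basic connectivity fact that $P_T(M,N)$ is $3$-connected whenever $M$ and $N$ are $3$-connected and $T$ is a triangle in both with $|E(M)|,|E(N)| \ge 3$; this is a known property of the generalised parallel connection along a triangle.

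Next I would suppose for contradiction that $M'$ has a $2$-separation $(U,V)$ with $|U|,|V| \ge 2$, and push this back through the deletion. If $(U,V)$ is a $2$-separation of $M' = P_T(M,N)\backslash T$, then in $P := P_T(M,N)$ we have $\lambda_P(U) \le \lambda_{M'}(U) + |T \cap \cl_P(\cdot)|$-type bounds; more precisely, adding the three elements of $T$ back can raise the connectivity by at most the rank they contribute, and since $T$ is a triangle lying in $\cl_P(E(N)-T) $, the set $T$ is in the closure of the $N$-side. The upshot is that $(U \cup T', V)$ or $(U, V \cup T')$ is $2$-separating in $P$ for an appropriate placement $T'$ of $T$, and since $P$ is $3$-connected this forces one side to be small — specifically, one of $U,V$ must be contained in $E(N) = T \cup T^*$ together with at most a bounded amount of $M$-stuff. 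Because the only elements of $M'$ coming from $N$ are $a',b',c'$, the small side lies essentially inside $T^*$, so $|U| \le 3$ and $U \subseteq T^* = \{a,b,c\}$ (after relabelling). Thus every $2$-separation of $M'$ has a side contained in the triad $\{a,b,c\}$.

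Now I would examine the structure of such a $2$-separating set $U \subseteq \{a,b,c\}$ with $|U| \ge 2$. Since $\{a,b,c\}$ is a triad of $M'$, any two-element subset $U$ is $2$-separating only if it is a series pair; if $|U|=3$ then $U=\{a,b,c\}$ is both a triangle and a triad of $M'$ — here I would use that $M'$ always contains the triangles $\{a,b',c'\}$ etc.\ of the $K_4$, which after relabelling are triangles of $M'$ meeting $\{a,b,c\}$ — and a triangle-triad forces (via the remark in the Connectivity subsection) $\lambda_{M'}(\{a,b,c\})=1$, so $|E(M')| \le 4$, contradicting $3$-connectivity of $M$ (which has $|E(M)| \ge 4$, hence $|E(M')| \ge 5$). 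So every $2$-separation of $M'$ has a side that is a series pair, giving that $M'$ is $3$-connected up to series pairs. For the "moreover" clause: if $M'$ is genuinely not $3$-connected, take a series pair $\{p,q\} \subseteq \{a,b,c\}$, say $\{a,b\}$. Then in $M'$, $\{a,b\}$ is a series pair while $\{a',b',c'\}=\{a,b,c\}$ is a triad and $M'$ contains a triangle through $c$ and two elements of $T^*$'s complement from the $K_4$; chasing the triangle $\{a,b',c'\} \mapsto$ (relabelled) through the guts/coguts structure, I would build a fan ordering $(f_1,f_2,f_3,f_4)$ in $M'$, and then pull it back through the $\Delta$-$Y$ exchange (which sends fans to fans of related length, shifting triangle/triad roles) to produce a $4$-element fan in $M$.

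The main obstacle I anticipate is the bookkeeping in the second step: controlling exactly how $\lambda$ changes when deleting the triangle $T$ from the generalised parallel connection, and verifying that the "small side" of any $2$-separation of $M'$ really is confined to the relabelled triad, rather than picking up extra elements of $M$ across the modular cut. This requires carefully using that $T$ is a \emph{modular} triangle in $P_T(M,N)$ and that $M$ is $3$-connected, so that no $2$-separation of $M$ interferes. The "moreover" clause is comparatively routine once the main dichotomy is in place, but it does need the explicit triangles of the $K_4$-gadget to exhibit the fan.
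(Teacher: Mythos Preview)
Your approach via direct analysis of the generalised parallel connection is genuinely different from the paper's, which simply invokes Bixby's Lemma three times (once for each of the successive deletions of $a$, $b$, $c$ from $P=P_T(M,M(K_4))$). However, your execution has a real error, not just a gap in rigour.

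The central claim---that every $2$-separation $(U,V)$ of $M'$ has its small side $U$ contained in the relabelled triad $\{a,b,c\}$---is false. In fact, no two-element subset of $\{a',b',c'\}$ can be a series pair of $P\backslash T$: if $\{a',b'\}$ were a series pair, there would be a cocircuit $C^*$ of $P$ with $C^* \subseteq T \cup \{a',b'\} \subseteq E(M(K_4))$; but since $T$ is coindependent in $M$, one checks that the only cocircuit of $P$ contained in $E(M(K_4))$ is $\{a',b',c'\}$, a contradiction. The series pairs of $M'$, when they exist, always consist of one element of the new triad and one element $p\in E(M)-T$; explicitly, $\{p,a'\}$ is a series pair of $P\backslash T$ precisely when $\{p,b,c\}$ is a triad of $M$. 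Together with the triangle $\{a,b,c\}$ this gives the $4$-element fan $(a,b,c,p)$ in $M$, which is how the ``moreover'' clause actually arises.

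Notice that if your containment claim were correct, your argument would show that $M'$ is \emph{always} $3$-connected (since no series pair lives inside the triad), contradicting the very ``moreover'' clause you are trying to prove. Your handling of the $|U|=3$ case is also off: after the exchange, $\{a,b,c\}$ is an \emph{independent} triad of $M'$, not a triangle-triad, so $\lambda_{M'}(\{a,b,c\})=2$ and no contradiction is needed there. Your bookkeeping for how $\lambda$ changes under deletion of $T$ can be made precise (using $c\in\cl_P(\{a',b'\})$, etc.), but the conclusion you draw from it is the wrong one. The paper's Bixby route sidesteps all of this.
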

\begin{proof}
  Let $T = \{a,b,c\}$, and let $M'' = P_T(M, M(K_4))$ where the $M(K_4)$ has $T$ as a triangle and $\{a',b',c'\}$ as the complementary triad labelled such that $\{a,b',c'\}$, $\{a',b,c'\}$ and $\{a',b',c\}$ are triangles.
  Then $M' \cong M'' \ba T$.
  The matroid $M''$ is $3$-connected (see, for example, \cite[Section~11.4, Exercise~9]{Oxley11}), and there is no triad of $M''$ that meets $T$.
  Observe also that a cocircuit of $M''$ that contains $T$ also contains two elements of $\{a',b',c\}$, and at least one element of $E(M)-T$.  So $T$ is not contained in a $4$- or $5$-element cocircuit.
  By Bixby’s Lemma~\cite{Bixby82} (see also \cite[Lemma~8.7.3]{Oxley11}), and since $a$ is not in a triad of $M''$, the matroid $M'' \ba a$ is $3$-connected.
  By another application of Bixby's Lemma, $M'' \ba a,b$ is $3$-connected up to series pairs.

  If $M'' \ba a,b$ is $3$-connected, then $M'' \ba T$ is $3$-connected up to series pairs, by Bixby's Lemma.
  So assume that $M'' \ba a,b$ has a series pair $S$.
  Then $S \cup \{a,b\}$ is a cocircuit of $M''$ where $S=\{x,c'\}$ for some $x \in E(M) - T$.
  It follows that $S$ is the unique series pair in $M'' \ba a,b$.
  Now $M'' \ba a,b/c'$ is $3$-connected so, by Bixby's Lemma again, $M'' \ba T/c'$ is $3$-connected up to series pairs.
  Since $c'$ is not a loop in $M'' \ba T$, the matroid $M'' \ba T$ is $3$-connected up to series pairs.

  Suppose $M'' \ba T$ is not $3$-connected.  Then $M'' \ba T$ has a series pair $S'$ such that $S' \cup X$ is a $4$-element cocircuit of $M''$ for some pair $X \subseteq \{a,b,c\}$.  Without loss of generality, say $X=\{a,b\}$.  Then $S' = \{x,c'\}$ for some $x \in E(M)-T$.
  It follows that $\{x,a,b\}$ is a triad of $M$, so $T \cup x$ is a $4$-element fan of $M$.

  For the converse, it is easy to see that if $T$ is contained in a $4$-element fan, then $M'$ has a series pair.
\end{proof}

\begin{corollary}
  \label{no4fans}
  Let $\mathbb{P}$ be a partial field, and let $M$ be an excluded minor for the class of $\mathbb{P}$-representable matroids.
  Then $M$ is $3$-connected and has no $4$-element fans.
\end{corollary}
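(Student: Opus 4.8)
The plan is to argue by contradiction. Suppose $M$ is an excluded minor for $\mathbb{P}$-representability with a $4$-element fan $(f_1,f_2,f_3,f_4)$; I will produce a matroid that is \dY-equivalent to $M$ but not $3$-connected, contradicting \cref{osvdelta} together with the (easily seen) fact that an excluded minor for $\mathbb{P}$-representability is $3$-connected.

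First I would arrange that a single \dY\ exchange can be performed at the triangle end of the fan. Since $M$ is $3$-connected, no triple in the fan is simultaneously a triangle and a triad (a triangle-triad gives $\lambda_M=1$); and by \cref{osvdelta} the dual $M^*$ is also an excluded minor, with $(f_1,f_2,f_3,f_4)$ a fan of $M^*$ in which the triangles and triads are interchanged. So, replacing $M$ by $M^*$ if necessary, I may assume $T:=\{f_1,f_2,f_3\}$ is a triangle and $\{f_2,f_3,f_4\}$ is a triad of $M$. A routine check shows the only $3$-connected matroid on at most five elements with both a triangle and a triad is $U_{2,4}$, so I may assume $|E(M)|\ge 6$; then $\lambda_M(T)=2$, whence $r(E(M)-T)=r(M)$, so $E(M)-T$ is spanning and $T$ is coindependent.

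Next I would set $M':=\Delta_T(M)$. As $T$ is a coindependent triangle, $M'$ is \dY-equivalent to $M$, so $M'\in\Delta^*(M)$, and hence by \cref{osvdelta} $M'$ is an excluded minor for $\mathbb{P}$-representability; in particular $M'$ is $3$-connected. On the other hand, \cref{dyconn} tells us $M'$ is $3$-connected up to series pairs. So the contradiction will follow as soon as I exhibit a series pair in $M'$.

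The hard part is exactly this last step: showing the \dY\ exchange at the triangle of a $4$-element fan genuinely breaks $3$-connectivity, rather than merely leaving $M'$ ``$3$-connected up to series pairs''. (This is the converse of the ``moreover'' clause of \cref{dyconn}.) I would prove it by unwinding the definition $M'=P_T(M,M(K_4))\backslash T$. Since $\{f_2,f_3,f_4\}$ is a triad of $M$ meeting $T$ in the two elements $f_2,f_3$, the set $E(M)-\{f_2,f_3,f_4\}$ is a hyperplane of $M$; using this together with $r(M')=r(M)+1$ and the modularity of $T$ in $M(K_4)$, a short rank computation in the generalised parallel connection shows that $f_4$ lies in a $2$-element cocircuit of $M'$ with one of the three new elements introduced by the exchange. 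As $|E(M')|=|E(M)|\ge 6$, this cocircuit is a genuine series pair, so $M'$ is not $3$-connected---the desired contradiction. Alternatively, the same conclusion can be reached by re-running the argument via Bixby's Lemma that proves \cref{dyconn} and tracking where the series pair arises.
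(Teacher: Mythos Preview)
Your proposal is correct and takes essentially the same approach as the paper. The paper offers only a one-line justification (``follows from the easily seen fact that an excluded minor for $\mathbb{P}$-representability is $3$-connected''), relying implicitly on \cref{osvdelta} and \cref{dyconn}; you spell out the argument and, in particular, correctly identify that what is actually needed is the converse of the ``moreover'' clause of \cref{dyconn}---that a \dY\ exchange on the triangle of a $4$-element fan genuinely produces a series pair---and you sketch a direct verification of that fact via the generalised parallel connection. (For the record, the series pair is $\{f_1,f_4\}$ after relabelling.)
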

\begin{proof}
  It is well known, and easy to see, that when $M$ is an excluded minor for the class of $\mathbb{P}$-representable matroids, $M$ is $3$-connected.  
  Let $T$ be a triangle of $M$ that is contained in a $4$-element fan, and let $M'$ be the $\Delta$-$Y$ exchange of $M$ on $T$.
  Then $M'$ has a series pair, by \cref{dyconn}.
  But $M'$ is an excluded minor for the class of $\mathbb{P}$-representable matroids, by \cref{osvdelta}, a contradiction.
  So $M$ has no $4$-element fans.
\end{proof}

\subsection*{Delete pairs}

Let $M$ be a $3$-connected matroid, and let $N$ be a $3$-connected minor of $M$.
For distinct $a,b \in E(M)$, we say that $\{a,b\}$ is a \emph{delete pair for $N$} if $M \ba a,b$ is $3$-connected and has an $N$-minor.
Brettell, Whittle, and Williams \cite{BWW20,BWW21,BWW22} proved that if $|E(M)| - |E(N)| \ge 10$, then, for some $M'$ that is \dY-equivalent to $M$, either $M'$ has a delete pair for $N$, or $(M')^*$ has a delete pair for $N^*$, or $M'$ has a $3$-separation where one side is ``spike-like''.
Moreover, it was shown in \cite{BCOSW20} that such a $3$-separation, where one side is ``spike-like'', cannot appear in an excluded minor for the class of $\mathbb{P}$-representable matroids, for any partial field $\mathbb{P}$.

\begin{theorem}[{\cite[Corollary~7.3]{BCOSW20}}]
  \label{detachsetup}
Let $\mathbb{P}$ be a partial field, let $M$ be an excluded minor for the class of $\mathbb{P}$-representable matroids, and let $N$ be a non-binary $3$-connected strong stabilizer for the class of $\mathbb{P}$-representable matroids, where $M$ has an $N$-minor.
Suppose that $|E(M)| \ge |E(N)| + 10$.
Then there exist $M_0 \in \Delta(M)$ and $(M',N') \in \{(M_0,N), (M_0^*,N^*)\}$ such that $M'$ has a pair of elements $\{a,b\}$ for which $M' \ba a,b$ is $3$-connected and has an $N'$-minor.
%
\end{theorem}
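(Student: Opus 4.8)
The plan is to derive this as a short combination of two deep results already in play: the detachability theorem of Brettell, Whittle, and Williams \cite{BWW20,BWW21,BWW22}, and the fact, established in \cite{BCOSW20}, that an excluded minor for $\mathbb{P}$-representability cannot have a $3$-separation one side of which is ``spike-like''. The bridge between them is \cref{osvdelta}, which lets us move freely within $\Delta^*(M)$ without leaving the class of excluded minors.

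First I would note that $M$ is $3$-connected, since every excluded minor for the class of $\mathbb{P}$-representable matroids is $3$-connected. As $N$ is then a $3$-connected minor of the $3$-connected matroid $M$ with $|E(M)| - |E(N)| \ge 10$, the detachability theorem of \cite{BWW20,BWW21,BWW22} applies and yields a matroid $M'$ that is \dY-equivalent to $M$ or to $M^*$ (so, by the definition of $\Delta^*$, we have $M' \in \Delta^*(M)$), such that either (a)~$M'$ has a pair of elements $\{a,b\}$ for which $M' \ba a,b$ is $3$-connected and has an $\{N,N^*\}$-minor, or (b)~$M'$ has a $3$-separation one side of which is ``spike-like''.

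The remaining step is to rule out outcome~(b). Since $M' \in \Delta^*(M)$ and $M$ is an excluded minor for the class of $\mathbb{P}$-representable matroids, \cref{osvdelta} tells us that $M'$ is also an excluded minor for that class. But by \cite{BCOSW20} an excluded minor for $\mathbb{P}$-representability has no $3$-separation with a spike-like side, so outcome~(b) is impossible. Therefore outcome~(a) holds, and the pair $\{a,b\}$ it supplies is exactly what the theorem asserts.

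All of the mathematical substance sits inside the two cited results, so the only real work is hypothesis bookkeeping: confirming that the $3$-connectivity of $M$ and of $N$ together with the bound $|E(M)| - |E(N)| \ge 10$ are precisely what the detachability dichotomy requires; confirming that ``\dY-equivalent to $M$ or $M^*$'' is exactly membership in $\Delta^*(M)$, to which \cref{osvdelta} applies; and noting that the $\{N,N^*\}$ in the conclusion (rather than just $N$) is forced because passing to a member of $\Delta^*(M)$ may dualise. I expect the main point of care to be aligning the precise formulation of the ``spike-like'' side condition as it appears in \cite{BWW20,BWW21,BWW22} with the precise formulation of the spike-elimination statement in \cite{BCOSW20}, so that the latter genuinely contradicts outcome~(b); everything else is immediate.
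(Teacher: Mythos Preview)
Your proposal is correct and matches the paper's own explanation: the paragraph immediately preceding the theorem statement outlines precisely this argument, combining the Brettell--Whittle--Williams detachability dichotomy with the fact from \cite{BCOSW20} that an excluded minor cannot have a spike-like $3$-separation, via \cref{osvdelta}. The paper does not give a formal proof beyond this sketch, as the result is cited directly from \cite[Corollary~7.3]{BCOSW20}.
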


\subsection*{Robust and strong elements, and bolstered bases}

Let $M$ be a $3$-connected matroid, let $B$ be a basis of $M$, and let $N$ be a $3$-connected minor of $M$.
Recall that we write $B^*$ to denote $E(M)-B$.
An element $e \in E(M)$ is \textit{$(N,B)$-robust} if either
\begin{enumerate}
 \item[(i)] $e\in B$ and $M/e$ has an $N$-minor, or
 \item[(ii)] $e\in B^*$ and $M\del e$ has an $N$-minor.
\end{enumerate}

\noindent
Note that an $N$-flexible element of $M$ is clearly $(N,B)$-robust for any basis~$B$ of $M$. 
An element $e \in E(M)$ is \textit{$(N,B)$-strong} if either
\begin{enumerate}
 \item[(i)] $e\in B$, and $\si(M/e)$ is $3$-connected and has an $N$-minor; or
 \item[(ii)] $e\in B^*$, and $\co(M\del e)$ is $3$-connected and has an $N$-minor.
\end{enumerate}

Now let $\{a,b\}$ be a pair of elements of $M$ such that $M \ba a,b$ is $3$-connected with an $N$-minor.
Let $B$ be a basis of the matroid $M \ba a,b$, and let $A$ be a $B\times B^{*}$ companion $\mathbb{P}$-matrix of $M$ such that $\{x,y,a,b\}$ incriminates $(M,A)$, for some $\{x,y\}\subseteq B$.
If either
  \begin{itemize}
    \item [(i)] $M \ba a,b$ has exactly one $(N,B)$-strong element $u$ outside of $\{x,y\}$, and $\{u,x,y\}$ is a triad of $M \ba a,b$; or
    \item [(ii)] $M \ba a,b$ has no $(N,B')$-strong elements outside of $\{x',y'\}$ for every choice of basis~$B'$ with a $B'\times (B')^{*}$ companion $\mathbb{P}$-matrix $A'$ of $M$ such that $\{x',y',a,b\}$ incriminates $(M,A')$, for some $\{x',y'\}\subseteq B'$;
  \end{itemize}
  then $B$ is a \emph{strengthened} basis.

In other words, a basis~$B$ is strengthened if $B$ is chosen such that either there is one $(N,B)$-strong element $u$ of $M \ba a,b$ outside of $\{x,y\}$, and $\{u,x,y\}$ is a triad; or there are no $(N,B)$-strong elements outside of $\{x,y\}$, and, moreover, there are no $(N,B')$-strong elements outside of $\{x',y'\}$ for any choice of basis~$B'$ with an incriminating set $\{x',y',a,b\}$ where $\{x',y'\} \subseteq B'$.
In particular, for a strengthened basis~$B$ with no $(N,B)$-strong elements, an allowable pivot cannot introduce an $(N,B)$-strong element.

Now suppose $B$ is strengthened.
We say that $B$ is \emph{bolstered} if
\begin{itemize}
  \item when $M \ba a,b$ has no $(N,B)$-strong elements outside of $\{x,y\}$, then
    for any $B_1\times B_1^{*}$ companion $\mathbb{P}$-matrix~$A_1$ where $\{x_1,y_1,a,b\}$ incriminates $(M,A_1)$, with $\{x_1,y_1\}\subseteq B_1$ and $\{a,b\}\subseteq B_1^{*}$, the number of $(N,B)$-robust elements of $M \ba a,b$ outside of $\{x,y\}$ is at least the number of $(N,B_1)$-robust elements of $M \ba a,b$ outside of $\{x_1,y_1\}$; or
  \item when $M \ba a,b$ has an $(N,B)$-strong element $u$ of $M \ba a,b$ outside of $\{x,y\}$, then for any $B_1\times B_1^{*}$ companion $\mathbb{P}$-matrix~$A_1$ such that 
    \begin{itemize}
      \item $\{x,y,a,b\}$ incriminates $(M,A_1)$, with $\{x,y\}\subseteq B_1$ and $\{a,b\}\subseteq B_1^{*}$, and
      \item $u$ is the only $(N,B_1)$-strong element of $M \ba a,b$, with $u \in B_1^*$,
    \end{itemize}
    the number of $(N,B)$-robust elements of $M \ba a,b$ is at least the number of $(N,B_1)$-robust elements of $M \ba a,b$. 
\end{itemize}
Loosely speaking, a strengthened basis~$B$ is bolstered if no allowable pivot increases the number of elements that are 
robust but not strong.

\subsection*{Excluded minors are almost fragile}

The results in the remainder of this section were proved in \cite{BCOSW20}, and are all under the following hypotheses.

\begin{itemize}[label=$(\spadesuit)$]
  \item Let $\mathbb{P}$ be a partial field.
  Let $M$ be an excluded minor for the class of $\mathbb{P}$-representable matroids, and let $N$ be a non-binary $3$-connected strong $\mathbb{P}$-stabilizer for the class of $\mathbb{P}$-representable matroids.
  Suppose $M$ has a pair of elements $\{a,b\}$ such that $M\ba a,b$ is $3$-connected with an $N$-minor. \label{hypotheses1}
\end{itemize}

\noindent
Henceforth we write $(\spadesuit)$ to remind the reader we are under these hypotheses.

\newcommand{\ha}{\hyperref[hypotheses1]{$(\spadesuit)$}}

  When \ha\ holds, note that, as $N$ is non-binary and $3$-connected, we have $|E(N)| \ge 4$ and $N$ is simple and cosimple.
  Thus, if $M'$ is a minor of $M$ that has an $N$-minor, and $M'$ has a parallel pair $\{e,f\}$, then $e$ and $f$ are $N$-deletable in $M$.
  We frequently use this fact, and its dual, without reference.

\begin{lemma}[{\cite[Lemma~3.1]{BCOSW20}}]
  \label{nostronginbasis}
  \ha\ Let $A$ be a $B \times B^*$ companion $\mathbb{P}$-matrix of $M$ such that $\{x, y, a, b\}$ incriminates $(M, A)$, where $\{x,y\} \subseteq B$ and $\{a,b\} \subseteq B^*$.
  If $v$ is an $(N, B)$-strong element of $M \ba a,b$ such that $v \notin \{x,y\}$, then $v \notin B$.
\end{lemma}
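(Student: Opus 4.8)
The plan is to prove the contrapositive-style statement directly: assume $v$ is an $(N,B)$-strong element of $M\ba a,b$ with $v\in B$ and $v\notin\{x,y\}$, and derive a contradiction. Being $(N,B)$-strong with $v\in B$ means $\si(M\ba a,b/v)$ is $3$-connected and has an $N$-minor. The first step is to record what incrimination gives us: since $\{x,y,a,b\}$ incriminates $(M,A)$ and $A$ is a companion $\mathbb{P}$-matrix, both $A-a$ and $A-b$ are $\mathbb{P}$-matrices, so $A[B,B^*\setminus\{a\}]$ represents $M\ba a$ and similarly for $b$; the incriminating $2\times2$ submatrix $A[\{x,y,a,b\}]$ witnesses the failure of $M$ to be $\mathbb{P}$-representable. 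I expect the key leverage to be that $v$ lies in the support row/column structure relevant to this incriminating set, or else that contracting $v$ (equivalently, a pivot off an entry in row $v$) can be arranged to be an allowable pivot via \cref{allowablexyrow} or \cref{allowablenonxy}.

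The main idea I would pursue: use the $3$-connectivity of $\si(M\ba a,b/v)$ together with its $N$-minor to find, after pivoting, a companion $\mathbb{P}$-matrix in which $v$ plays the role of one of the elements whose deletion we can control — or to show that $M\ba a,b/v$ having an $N$-minor forces $v$ to be $N$-deletable as well (since $v$ would be in a triangle or similar small configuration with $\{x,y\}$), contradicting that $M\ba a,b$ has no $N$-flexible element structure as needed, OR contradicting minimality/strengthening. More precisely, since $v\notin\{x,y\}$ and $v\in B$, consider whether row $v$ of $A$ has a nonzero entry in some column $q\in B^*\setminus\{a,b\}$ with $A_{xq}=A_{yq}=0$: then \cref{allowablenonxy} says pivoting on $vq$ is allowable, moving $v$ into the co-basis while preserving an incriminating $4$-set. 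After such a pivot $v\in B_1^*$, and the hypothesis that $\si(M\ba a,b/v)$ is $3$-connected with an $N$-minor translates (using that $v$ is now in the cobasis) into $v$ being $(N,B_1)$-strong in the cobasis sense — but the bulk of the argument is showing such an allowable pivot exists, which is where I expect the real work.

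The hard part will be handling the case where no such convenient allowable pivot off row $v$ exists — i.e., every nonzero entry of row $v$ lies in a column $q$ where not both $A_{xq}$ and $A_{yq}$ vanish, and $q\notin\{a,b\}$. In that situation I would argue structurally: the set of columns where row $v$ is nonzero is ``covered'' by the supports of rows $x$ and $y$, which constrains the position of $v$ relative to $\{x,y\}$ in the matroid — plausibly forcing $\{v,x,y\}$ into a triad or $v$ into $\cl_M(B^*\setminus\{a,b\})$-type position, so that $M\ba a,b/v$ simplifies in a way incompatible with $v$ being $(N,B)$-strong (strongness requires $\si(M\ba a,b/v)$ to be $3$-connected, but a triad through $v$ collapses under contraction). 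Alternatively, one pivots first within rows $x,y$ (allowable by \cref{allowablexyrow}) to change the support pattern and then apply the co-basis argument. I would lean on \cref{minor3conn} applied to a $2$-separation induced in $M\ba a,b/v$ by any non-trivial series/parallel class to pin down deletability/contractibility of $v$. The delicate bookkeeping is ensuring each pivot stays allowable and the $N$-minor persists throughout; I expect this case analysis, rather than any single computation, to be the crux.
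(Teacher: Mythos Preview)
The paper does not supply its own proof of this lemma; it is quoted verbatim from \cite[Lemma~3.1]{BCOSW20}. That said, your proposed approach has a genuine gap and would not succeed as written.

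You are trying to prove the result by pivoting and local matroid structure, but you never invoke the two hypotheses that actually drive the argument: that $M$ is an \emph{excluded minor} for $\mathbb{P}$-representability, and that $N$ is a \emph{strong $\mathbb{P}$-stabilizer}. These are the standing hypotheses of the section, and without them the statement is simply false. Your plan to pivot $v$ out of $B$ via an allowable pivot does not help: even if such a pivot exists, you would only have moved to a new basis $B_1$ with $v \in B_1^*$, which says nothing about whether $v$ lay in the original $B$. Moreover, the meaning of ``$(N,B_1)$-strong'' for $v \in B_1^*$ concerns $\co(M\ba a,b\ba v)$, not $\si(M\ba a,b/v)$, so the strongness hypothesis does not transfer as you suggest.

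The correct argument is short and direct. Suppose $v \in B - \{x,y\}$ is $(N,B)$-strong. Since $M$ is an excluded minor, $M/v$ is $\mathbb{P}$-representable. Deleting row~$v$ from $A$ yields a companion matrix $A-v$ for $M/v$: indeed $(A-v)-a$ and $(A-v)-b$ represent $M\ba a/v$ and $M\ba b/v$. Because $\si(M\ba a,b/v)$ is $3$-connected with an $N$-minor and $N$ is a strong stabilizer, the representation $(A-v)-\{a,b\}$ of $M\ba a,b/v$ extends uniquely (up to scaling) to a $\mathbb{P}$-representation of $M/v$; comparing with the two one-element extensions forces $A-v$ itself to be a $\mathbb{P}$-matrix representing $M/v$. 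But $v \notin \{x,y,a,b\}$, so $A[\{x,y,a,b\}] = (A-v)[\{x,y,a,b\}]$, and $(B-v)\triangle\{x,y,a,b\}$ is a basis of $M/v$ if and only if $B\triangle\{x,y,a,b\}$ is a basis of $M$. Hence $\{x,y,a,b\}$ still incriminates $(M/v, A-v)$, contradicting that $A-v$ represents $M/v$.
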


\begin{lemma}[{\cite[Proposition~3.7]{BCOSW20}}]
  \label{atmosttwostrong}
  \ha\ Let $A$ be a $B \times B^*$ companion $\mathbb{P}$-matrix of $M$ such that $\{x, y, a, b\}$ incriminates $(M, A)$, where $\{x,y\} \subseteq B$ and $\{a,b\} \subseteq B^*$.
  Then $M \ba a,b$ has at most two $(N,B)$-strong elements outside of $\{x,y\}$.
\end{lemma}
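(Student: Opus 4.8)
The plan is to argue by contradiction, after first using \cref{nostronginbasis} to locate the potential strong elements. Suppose $M \ba a,b$ has three distinct $(N,B)$-strong elements $v_1, v_2, v_3$ lying outside $\{x,y\}$. Since $a,b \notin E(M\ba a,b)$, \cref{nostronginbasis} gives $v_i \in B^* \setminus \{a,b\}$ for each $i$. By definition of a strong element, $\co((M\ba a,b)\ba v_i)$ is $3$-connected and has an $N$-minor; in particular, each $v_i$ is $N$-deletable in $M \ba a,b$.

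Next I would extract the local structure forced at each $v_i$. Since $M \ba a,b$ is $3$-connected with $|E(M\ba a,b)| \ge |E(N)| \ge 4$, it has no $2$-element cocircuit. Hence, if $(M\ba a,b)\ba v_i$ is not $3$-connected, then the $2$-separations of $(M\ba a,b)\ba v_i$ that survive cosimplification must be localised in series classes; chasing cocircuits — a cocircuit of $(M\ba a,b)\ba v_i$ is a minimal nonempty set of the form $C^* \setminus v_i$ with $C^*$ a cocircuit of $M\ba a,b$, and $M\ba a,b$ has no $2$-cocircuit — then shows that $v_i$ lies in a triad $T_i$ of $M \ba a,b$. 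The remaining case, where $(M\ba a,b)\ba v_i$ is itself $3$-connected, I would treat separately and it is easier. Two distinct triads of the $3$-connected matroid $M\ba a,b$ cannot share two elements (their symmetric difference would be a $2$-element cocircuit), so the $T_i$ pairwise meet in at most one element, and since $M$ has no $4$-element fan by \cref{no4fans}, none of these triads extends to a long fan.

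Then I would bring in the representation. For each $v_i$ with $A_{xv_i}\neq 0$ or $A_{yv_i}\neq 0$, \cref{allowablexyrow} supplies an allowable pivot $A^{p_iv_i}$ with $p_i\in\{x,y\}$; and for each $v_i$ with $A_{xv_i}=A_{yv_i}=0$, \cref{allowablenonxy} supplies allowable pivots on rows of $B\setminus\{x,y\}$. Using these pivots to reposition the incriminating quadruple, while tracking how the triads $T_i$ and the relevant $N$-deletions behave under the moves — it is here that the strong-stabilizer hypothesis on $N$ does its work, keeping $N$-minors and $3$-connectivity-after-cosimplification alive along the chain of pivots — I would argue that three such configurations cannot coexist: three mutually almost-disjoint triads glued to the quadruple $\{x,y,a,b\}$ eventually force either a $4$-element fan in $M$, contradicting \cref{no4fans}, or a subset that incriminates a pivoted matrix in a manner inconsistent with \cref{allowablexyrow,allowablenonxy}.

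The main obstacle is this last step: classifying how the triads $T_1, T_2, T_3$ overlap one another and the set $\{x,y,a,b\}$, disposing of the degenerate overlaps, and checking in every surviving configuration that the $N$-minor and cosimplification conditions for the other $v_j$ survive the allowable pivot chosen for $v_i$. Getting that bookkeeping right — in particular, ensuring that pivoting to handle one strong element does not quietly destroy the witness for another — is the delicate part, and it is essentially the reason the bound is $2$ rather than the more obvious $1$.
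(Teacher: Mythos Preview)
The paper does not prove this lemma at all: it is quoted verbatim as \cite[Proposition~3.7]{BCOSW20} and left without proof, so there is no argument in the present paper to compare your proposal against. What you have written is not a proof but a plan, and you say so yourself: the case where $(M\ba a,b)\ba v_i$ is already $3$-connected is deferred (``I would treat separately''), and the entire endgame --- the classification of how three triads overlap $\{x,y,a,b\}$ and each other, and the verification that allowable pivots for one $v_i$ preserve the strong-element witnesses for the others --- is described as ``the main obstacle'' and ``delicate'' but not carried out. That is precisely the content of the proposition, so at present there is no proof here.

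A more specific worry: your second paragraph asserts that each $v_i$ lies in a triad of $M\ba a,b$, but this need not hold when $(M\ba a,b)\ba v_i$ is itself $3$-connected, and you do not explain why that branch is ``easier''. Compare \cref{strongprops}, which does place a strong element in a triad meeting $\{x,y\}$, but only under the additional hypothesis $|E(M)|\ge |E(N)|+10$; the statement you are trying to prove carries no such hypothesis, so you cannot simply import that conclusion. Likewise, the appeal to \cref{no4fans} to rule out long fans is fine for $M$, but the triads you produce live in $M\ba a,b$, and $M\ba a,b$ can have $4$-element fans (indeed, the Type~II and Type~III gadgets of \cref{bcosw-thm} are exactly such fans). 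If you want to turn this sketch into a proof, you will need to actually execute the case analysis you describe, and be careful about which matroid each connectivity and fan statement refers to.
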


We say that a subset $G$ of $E(M\ba a,b)$ is a \textit{confining set} if $G\cap B_1=\{x_1,y_1\}$ for some basis~$B_1$ of $M\ba a,b$, and either
\begin{itemize}
  \item[(a)] $G$ is a $4$-cosegment, or
  \item[(b)] $G$ is the union of two triads $T$ and $T'$ with $|T \cap T'|=1$, where $G\cap B_1^{*}$ has at least one $(N,B_1)$-strong element,
\end{itemize}
where $x_1$ and $y_1$ are elements of $B_1$ such that $\{x_1,y_1,a,b\}$ incriminates $(M,A_1)$ for some $B_1\times B_1^{*}$ companion $\mathbb{P}$-matrix~$A_1$ of $M$.

\begin{lemma}[{\cite[Proposition~4.16]{BCOSW20}}]
  \label{confiningset}
  \ha\ If $M \ba a,b$ has a confining set, then $|E(M)| \le |E(N)|+9$.
\end{lemma}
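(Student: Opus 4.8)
The plan is to prove the contrapositive: if $|E(M)|\ge|E(N)|+10$, then $M\ba a,b$ has no confining set. So suppose not, and let $G$ be a confining set, with $G\cap B_1=\{x_1,y_1\}$ for a basis $B_1$ of $M\ba a,b$ and with $A_1$ a $B_1\times B_1^*$ companion $\mathbb{P}$-matrix of $M$ for which $\{x_1,y_1,a,b\}$ incriminates $(M,A_1)$ and $\{a,b\}\subseteq B_1^*$. Write $G\cap B_1^*=\{c_1,\dots,c_k\}$, so $k=2$ in case~(a) of the definition of a confining set and $k=3$ in case~(b). I would run the two cases in parallel, since a $4$-cosegment and a union of two triads meeting in a point each furnish a rank-$2$, cosegment-like gadget on $G$ with $\{x_1,y_1\}$ in the basis.

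First I would pin down the local matrix. Deleting the third element of a triad contained in $G$ from the $3$-connected matroid $M\ba a,b$ leaves the remaining two in series; reading this off $A_1$ forces the submatrix $A_1[\{x_1,y_1\},\{c_1,\dots,c_k\}]$, together with the $\{x_1,y_1\}$-rows of the identity part, to realise the cosegment in $M\ba a,b$, so that after row and column scalings every relevant $2\times 2$ subdeterminant is nonzero and this $2\times(k+2)$ configuration is determined up to scaling. This is where being a \emph{confining} set, rather than an arbitrary $3$-separating set, does real work: in case~(a) the fourth cosegment element supplies the rigidity, while in case~(b) it instead comes from the guaranteed $(N,B_1)$-strong element of $G\cap B_1^*$, whose presence, together with the bounds of \cref{nostronginbasis,atmosttwostrong} on strong elements, I would carry along through the pivots below.

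Next, using \cref{allowablexyrow,allowablenonxy}, I would perform a short sequence of allowable pivots, each on an entry with a coordinate in $\{x_1,y_1\}\cup\{c_1,\dots,c_k\}$, to reach a companion $\mathbb{P}$-matrix $A'$ of $M$ admitting an incriminating set $Z'\subseteq G\cup\{a,b\}$; the rigidity from the previous step is what guarantees the needed nonzero pivot entries and the survival of the confining structure (and, in case~(b), of the strong element). The incrimination is now confined to the at-most-seven elements of $G\cup\{a,b\}$. Since $|E(M\ba a,b)|=|E(M)|-2\ge|E(N)|+8$ while $|G|\le 5$, standard minor--$3$-separation arguments applied to the $3$-separation of $M\ba a,b$ induced by $G$ (cf.\ \cref{minor3conn,gutsandcoguts,niceVertSep}, discarding $N$-flexible elements as in \cref{genfragileconn}) produce an $N$-minor of $M\ba a,b$ using at most one element of $G$. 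Because $N$ strongly $\mathbb{P}$-stabilises the $\mathbb{P}$-representable matroids $M\ba a$ and $M\ba b$, the representations $A'-a$ and $A'-b$ are determined, up to scaling, by their common restriction to such an $N$-minor; as that restriction lies essentially outside $G\cup\{a,b\}$, the two representations agree after scaling on $E(M)-\{a,b\}$, and combining them --- using the first step to fix the scalings on $G$ --- yields a $\mathbb{P}$-matrix $A''$ with $M=M[I\,|\,A'']$, contradicting that $Z'$ incriminates $(M,A')$, i.e.\ that $M$ is an excluded minor. By \cref{osvdelta} this conclusion is unaffected by working within $\Delta^*(M)$.

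I expect the main obstacle to be this last step: converting the local confinement of the incriminating witness into a genuine $\mathbb{P}$-representation of $M$. The delicate points are (i) extracting an $N$-minor that stays clear of $G\cup\{a,b\}$ --- precisely where the hypothesis $|E(M)|\ge|E(N)|+10$ is spent, and where one must also be careful that $M\ba a$ and $M\ba b$ need not themselves be $3$-connected; and (ii) reconciling the scalings of $A'-a$ and $A'-b$ on the elements of $G$, where the two representations may a priori differ and where the cosegment rigidity from the first step must be exploited cell by cell. Threading the $(N,B_1)$-strong element through every pivot so that $G$ remains a confining set at each stage is the extra bookkeeping needed to make case~(b) go through.
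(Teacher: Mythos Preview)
The paper does not contain a proof of this lemma; it is stated as \cite[Proposition~4.16]{BCOSW20} and imported from the prequel without argument. There is no ``paper's own proof'' here to compare against.

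On its own merits, your proposal is an outline rather than a proof. The broad strategy --- use allowable pivots to localise the incriminating set to $G\cup\{a,b\}$, then invoke the strong-stabilizer property of $N$ to manufacture a $\mathbb{P}$-representation of $M$ and contradict $M$ being an excluded minor --- is the standard shape of arguments in this line of work and is plausibly how the result is obtained in \cite{BCOSW20}. But the proposal repeatedly defers the actual content: ``I would perform a short sequence of allowable pivots'', ``I expect the main obstacle to be this last step'', and the closing paragraph enumerating ``delicate points'' that still need to be handled all signal that the hard work has not been done. Several specific invocations are also off: the appeal to \cref{genfragileconn} to ``discard $N$-flexible elements'' misreads that lemma (it concerns connectivity of $\mathcal{N}$-fragile matroids, not a mechanism for removing flexible elements), and the final sentence invoking \cref{osvdelta} is irrelevant, since no $\Delta$-$Y$ exchange is performed anywhere in the argument. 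As written, this is a reasonable research plan for reconstructing the proof, but it is not itself a proof.
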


\begin{lemma}
  \label{strongprops}
  \ha\ Suppose $|E(M)| \ge |E(N)| + 10$ and
  let $A$ be a $B \times B^*$ companion $\mathbb{P}$-matrix of $M$ such that $\{x, y, a, b\}$ incriminates $(M, A)$, where $\{x,y\} \subseteq B$ and $\{a,b\} \subseteq B^*$.
  If $v \in B^* - \{a,b\}$ is an $(N,B)$-strong element of $M\ba a,b$, 
  then
  \begin{enumerate}
    \item $v$ is in a triad $T^*$ of $M \ba a,b$ such that $\emptyset \subsetneqq T^* \cap B \subseteq \{x,y\}$, and
    \item for some $\{e,f\} = \{a,b\}$, the set $(T^*-v) \cup e$ is a triangle-triad of $M\ba f,v$.
  \end{enumerate}
\end{lemma}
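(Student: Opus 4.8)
The plan is to establish conclusion~(1) first, and then to use the triad it produces---together with the incriminating $2\times 2$ submatrix $A[\{x,y,a,b\}]$---to locate the triangle-triad in conclusion~(2). I will use the following observations throughout. Since $v$ is $(N,B)$-strong and $v\in B^*-\{a,b\}$, the matroid $\co((M\ba a,b)\del v)$ is $3$-connected with an $N$-minor; in particular $M\ba a,b\del v$, and hence $M\del v$, has an $N$-minor, so $M\del v$ is $\mathbb P$-representable as $M$ is an excluded minor. Moreover $A-a-v$ and $A-b-v$ are $\mathbb P$-matrices representing $M\del a\del v$ and $M\del b\del v$, so $A-v$ is a companion $\mathbb P$-matrix for $M\del v$; and since deleting the column~$v$ alters neither $\det A[\{x,y,a,b\}]$ nor whether $B\triangle\{x,y,a,b\}$ is a basis, the set $\{x,y,a,b\}$ still incriminates $(M\del v,A-v)$, and every allowable pivot from $A-v$ (in the sense of \cref{allowablexyrow,allowablenonxy}) is again incriminated by $\{x,y,a,b\}$ or by its symmetric difference with the pivot.

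\emph{Conclusion~(1).} I would first argue that $M\ba a,b\del v$ is not $3$-connected: otherwise $M\del v$ is a $\mathbb P$-representable matroid carrying the incriminated companion matrix $A-v$ whose $\{a,b\}$-deletion is $3$-connected with an $N$-minor, and feeding this into the stabilizer machinery of \cite{BCOSW20}---where $N$ being a strong $\mathbb P$-stabilizer, together with the invariance of the incriminating set under allowable pivots, rules out exactly such ``clean'' configurations---gives a contradiction. Given that $M\ba a,b\del v$ is not $3$-connected while $\co((M\ba a,b)\del v)$ is, the former has a series class; as $M\ba a,b$ is $3$-connected, no pair $\{p,q\}$ from this class is a cocircuit of $M\ba a,b$, so $\{p,q,v\}$ is a triad of $M\ba a,b$, and a short argument---using \cref{confiningset} to rule out a $4$-cosegment---shows the series class has size two, yielding the triad $T^*=\{p,q,v\}$. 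A triad is a cocircuit, so it cannot lie in the cobasis $B^*$; hence $T^*\cap B\ne\emptyset$. Finally, if $z\in(T^*\cap B)-\{x,y\}$, then pivoting on a suitable entry in row~$z$ or in a row of $\{x,y\}$ (allowable by \cref{allowablexyrow} or \cref{allowablenonxy}) produces a basis $B_1$ for which $T^*$---or a variant obtained with a strong element exposed by the pivot---is a confining set in the sense defined before \cref{confiningset}, whence $|E(M)|\le|E(N)|+9$, contradicting $|E(M)|\ge|E(N)|+10$. Therefore $\emptyset\subsetneqq T^*\cap B\subseteq\{x,y\}$.

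\emph{Conclusion~(2).} Write $T^*=\{p,q,v\}$ with, by~(1), $p\in\{x,y\}$. From~(1), $\{p,q\}$ is a series pair of $M\ba a,b\del v=(M\del v)\del a\del b$, so one of $\{p,q\}$, $\{p,q,a\}$, $\{p,q,b\}$, $\{p,q,a,b\}$ is a cocircuit of $M\del v$. Using the incriminating submatrix $A[\{x,y,a,b\}]$---and the fact that $p\in\{x,y\}$, so exactly one of the columns~$a$,~$b$ interacts with the coline spanned by the complement of $\{p,q,v\}$---I would select the element $e$ of $\{a,b\}$ for which $\{p,q,e\}$, and not $\{p,q\}$, is a cocircuit of $M\ba f,v$, where $f$ is the other element of $\{a,b\}$; thus $\{p,q,e\}$ is a triad of $M\ba f,v$. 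To see that it is also a triangle, pivot $v$ into the basis (an allowable pivot, since $v\notin\{x,y,a,b\}$) and restrict to $M\ba f,v$: the $(N,B)$-strength of $v$, which keeps the $N$-minor inside $\co(M\ba a,b\del v)$, together with the incrimination, forces the column of the element of $\{p,q\}$ other than $p$ to agree, up to row scaling on $\{p,q,e\}$, with the column of $e$; equivalently $\{p,q,e\}$ is a circuit of $M\ba f,v$. Hence $(T^*-v)\cup e=\{p,q,e\}$ is a triangle-triad of $M\ba f,v$.

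\emph{Main obstacle.} The two genuinely hard points are: in~(1), proving that the loss of $3$-connectivity of $M\ba a,b\del v$ is \emph{forced}---so that $v$ really does lie in a triad---and steering the case $z\notin\{x,y\}$ onto a confining set; and in~(2), the triangle claim, which is exactly the assertion that a strong element's triad ``closes up'' into a triangle-triad once one of $a$,~$b$ is restored, and whose proof must knit together the persistence of the $N$-minor under cosimplification, the incriminating set, and allowable pivots. The remaining steps---the cocircuit bookkeeping, the ``triad $\not\subseteq$ cobasis'' observation, and the size count for the series class---are routine.
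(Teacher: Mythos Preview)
The paper does not prove this lemma from scratch: it simply rules out the possibility that $M\ba a,b,v$ has a series class of size $\ge 3$ (via \cite[Lemma~3.2]{BCOSW20} and \cref{confiningset}, exactly as you do), and then invokes \cite[Lemma~3.4]{BCOSW20} for everything else. So you are attempting to reconstruct the content of that cited lemma, and the question is whether your reconstruction holds up.

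The overall skeleton is right, but the two steps you yourself flag as ``main obstacles'' are not actually carried out---they are asserted, not proved.

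\textbf{In (1).} Your argument that $M\ba a,b,v$ fails to be $3$-connected is just ``feeding this into the stabilizer machinery of \cite{BCOSW20}\dots rules out exactly such `clean' configurations.'' That is not an argument; it is a pointer to where an argument should go. The actual mechanism is that if $M\ba a,b,v$ were $3$-connected with an $N$-minor, then the strong-stabilizer property forces the companion matrix $A-v$ (and its allowable pivots) to be a genuine $\mathbb{P}$-representation of $M\del v$, contradicting that $\{x,y,a,b\}$ incriminates it---but making this precise is exactly the content of \cite[Lemma~3.4]{BCOSW20}, and you have not supplied it. Likewise, your proof that $T^*\cap B\subseteq\{x,y\}$ says only that ``pivoting on a suitable entry'' produces a confining set; you do not identify the pivot, verify it is allowable via \cref{allowablexyrow} or \cref{allowablenonxy}, or check that the result actually matches either clause of the confining-set definition (which requires $G\cap B_1=\{x_1,y_1\}$, a very specific intersection pattern).

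\textbf{In (2).} The triangle claim is the real gap. You write that after pivoting $v$ into the basis, ``the incrimination forces the column of the element of $\{p,q\}$ other than $p$ to agree, up to row scaling on $\{p,q,e\}$, with the column of $e$.'' This does not follow. Incrimination is a statement about the $2\times 2$ block $A[\{x,y,a,b\}]$; it says nothing directly about the column of $q$ versus the column of $e$ outside that block. The genuine argument (in \cite[Lemma~3.4]{BCOSW20}) again uses the strong-stabilizer property applied to $M\del v$: one shows that unless $(T^*-v)\cup e$ is a circuit, a representation of $M\del v$ can be built that is incompatible with the incrimination. Your sketch gestures at the right ingredients but does not connect them.

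In short: your outline matches the shape of the cited lemma, and the routine parts (the cosegment bound, ``triad $\not\subseteq$ cobasis'') are fine, but the load-bearing steps are exactly the ones you left as obstacles, and the text you wrote for them is not a proof.
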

\begin{proof}
  Let $v \in B^* - \{a,b\}$ be an $(N,B)$-strong element of $M\ba a,b$ such that
  $M \ba a,b,v$ has a series class of size at least three, so $v$ is in a cosegment $G$ of $M \ba a,b$ of size at least four.
  Then $G$ is a confining set by \cite[Lemma~3.2]{BCOSW20}, so $|E(M)| \le |E(N)|+9$ by \cref{confiningset}, a contradiction.
  So $M \ba a,b,v$ is $3$-connected up to series pairs, in which case the \lcnamecref{strongprops} holds by \cite[Lemma~3.4]{BCOSW20}.
\end{proof}

\begin{theorem}[{\cite[Theorem~6.7]{BCOSW20}}]
  \label{bcosw-thm}
  \ha\ Either
  \begin{itemize}
    \item[(i)] $|E(M)|\leq |E(N)|+9$, or 
    \item[(ii)] $M$ has a bolstered basis~$B$, and a $B\times B^{*}$ companion $\mathbb{P}$-matrix $A$ for which $\{x,y,a,b\}$ incriminates $(M,A)$, where $\{x,y\}\subseteq B$ and $\{a,b\}\subseteq B^{*}$, and either
      \begin{itemize} 
        \item[(a)] $M\del a,b$ is $N$-fragile, and $M\del a,b$ has at most one $(N,B)$-robust element outside of $\{x,y\}$, where if such an element $u$ exists, then $u\in B^{*}-\{a,b\}$ is an $(N,B)$-strong element of $M\del a,b$, and $\{u,x,y\}$ is a coclosed triad of $M\del a,b$, or
        \item[(b)] $M\del a,b$ is not $N$-fragile, but there is an element $u \in B^*-\{a,b\}$ that is $(N,B)$-strong in $M \ba a,b$; either
          \begin{itemize}
            \item[(I)] the $N$-flexible, and $(N,B)$-robust, elements of $M\del a,b$ are contained in $\{u,x,y\}$, or
            \item[(II)] the $N$-flexible, and $(N,B)$-robust, elements of $M\del a,b$ are contained in $\{u,x,y,z\}$, where $z \in B$, and $(z,u,x,y)$ is a maximal fan of $M \del a,b$, or
            \item[(III)]the $N$-flexible, and $(N,B)$-robust, elements of $M\del a,b$ are contained in $\{u,x,y,z,w\}$, where $z \in B$, $w \in B^*$, and $(w,z,x,u,y)$ is a maximal fan of $M \del a,b$;
          \end{itemize}
          the unique triad in $M \ba a,b$ containing $u$ is $\{u,x,y\}$; and $M$ has a cocircuit $\{x,y,u,a,b\}$ and a triangle $\{d,x,y\}$ for some $d\in \{a,b\}$.
      \end{itemize}
  \end{itemize}
\end{theorem}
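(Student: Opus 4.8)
The plan is to assume $|E(M)|\ge|E(N)|+10$, thereby ruling out~(i), and to derive~(ii); this reproduces the argument of \cite[Theorem~6.7]{BCOSW20}, whose shape I sketch. The first step is to choose the data well. Among all triples $(B,A,\{x,y,a,b\})$ in which $A$ is a $B\times B^*$ companion $\mathbb{P}$-matrix of~$M$ with $\{x,y,a,b\}$ incriminating $(M,A)$, $\{x,y\}\subseteq B$ and $\{a,b\}\subseteq B^*$, I would first pass to one that is \emph{strengthened}: this is possible because, by \cref{nostronginbasis} and \cref{atmosttwostrong}, the number of $(N,B')$-strong elements outside the incriminating row-pair is a non-negative integer at most two, and whenever it is positive an allowable pivot onto a strong element (supplied by \cref{allowablexyrow} or \cref{allowablenonxy}) either realises the triad configuration required by the definition of ``strengthened'' or strictly decreases this count, so a descent terminates. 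Among strengthened choices I would then pass to a bolstered one, by a second descent on the count of $(N,B)$-robust elements (the quantity made precise in the definition of `bolstered'), using that allowable pivots preserve the companion/incriminating structure. Having fixed a bolstered basis~$B$, the rest of the analysis exploits its rigidity together with \cref{strongprops}, which places every strong $v\in B^*-\{a,b\}$ in a triad~$T^*$ with $\emptyset\subsetneqq T^*\cap B\subseteq\{x,y\}$ and records that $(T^*-v)\cup e$ is a triangle-triad of $M\ba f,v$ for some $\{e,f\}=\{a,b\}$, and with \cref{confiningset}, which forbids --- under our size hypothesis --- any $4$-cosegment, and any union of two triads meeting in one point that contains a strong element.

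Now split on whether $M\ba a,b$ is $N$-fragile. If it is, then no element is $N$-flexible, so every $(N,B)$-robust element outside $\{x,y\}$ is $N$-deletable (if it lies in $B^*$) or $N$-contractible (if it lies in $B$) but not both. One checks that such an element~$u$ is in fact $(N,B)$-strong --- otherwise $M\ba a,b,u$ would have a series class of size at least three, a confining cosegment, exactly as in the proof of \cref{strongprops} --- so $u\in B^*-\{a,b\}$ by \cref{nostronginbasis}, and $u$ lies in a triad meeting~$B$ inside $\{x,y\}$, which one argues is $\{u,x,y\}$. Two such elements would place in $M\ba a,b$ two triads meeting in a point, one containing a strong element, i.e.\ a confining set, contradicting the size hypothesis; so there is at most one, and for the $u$ that may exist, $\{u,x,y\}$ is a coclosed triad --- coclosedness because an element in its coclosure could be pivoted so as either to enlarge the triad into a confining cosegment or to increase the number of robust-but-not-strong elements, contradicting bolsteredness. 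This is conclusion~(ii)(a).

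The substantial case is when $M\ba a,b$ is not $N$-fragile, and hence has an $N$-flexible element. Here one must (1)~produce an $(N,B)$-strong element $u\in B^*-\{a,b\}$; (2)~show $\{u,x,y\}$ is the unique triad through~$u$; (3)~confine the $N$-flexible (hence $(N,B)$-robust) elements to $\{u,x,y\}$ together with a maximal fan of size at most~$5$, yielding the alternatives (I)--(III); and (4)~lift the structure back to~$M$. Step~(1) is delicate: one shows that if $M\ba a,b$ had no $(N,B)$-strong element outside $\{x,y\}$, then the incrimination structure, the strong-stabilizer hypothesis on~$N$, and the freedom to perform allowable pivots could be combined to force either a confining set or the bound $|E(M)|\le|E(N)|+9$, a contradiction. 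Given~$u$, step~(2) follows from \cref{strongprops} once \cref{confiningset} is used to exclude a second triad through~$u$. For~(3), each $N$-flexible element~$e$ forces, via \cref{minor3conn} applied to a $2$-separation of $M\ba a,b,e$ or of $M\ba a,b/e$, a triangle or triad attached to $\{u,x,y\}$, and iterating these attachments builds a fan containing $\{u,x,y\}$; its length is capped because a longer fan around the strong element~$u$ would contain a confining set. For~(4), transporting the triangle-triad $(T^*-u)\cup e$ of $M\ba f,u$ from \cref{strongprops}(ii) up to~$M$ --- reintroducing $a$ and $b$ and tracking the relevant closures and coclosures --- yields the cocircuit $\{x,y,u,a,b\}$ and the triangle $\{d,x,y\}$. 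This is conclusion~(ii)(b).

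I expect steps~(1) and~(3) of the non-fragile case to be the main obstacles. They require handling, at the same time, the representation-theoretic incrimination bookkeeping --- which controls which pivots are allowable and hence what the extremal choice of basis actually delivers --- and the purely matroidal fan-and-connectivity analysis, while systematically ruling out the ways in which the flexible elements could escape the triad-plus-fan, or an extra robust or strong element could appear. The split into the cases (I)--(III) is precisely the record of how long the relevant fan can be before a confining set becomes unavoidable.
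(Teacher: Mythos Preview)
This theorem is not proved in the present paper at all: it is quoted verbatim from the prequel as \cite[Theorem~6.7]{BCOSW20} and used as a black box. There is therefore no proof here to compare your proposal against. Your sketch is an attempt to reconstruct the argument of \cite{BCOSW20}, and while it correctly identifies the main ingredients available in the current paper---the strengthened/bolstered basis machinery, \cref{nostronginbasis,atmosttwostrong,strongprops,confiningset}, and the allowable-pivot lemmas---several of your steps are genuine gaps rather than routine checks.

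In the $N$-fragile case, your claim that a robust-but-not-strong $u\in B^*$ would force a series class of size at least three in $M\ba a,b,u$ is not justified: robustness only says $M\ba a,b,u$ has an $N$-minor, and failure of strength only says $\co(M\ba a,b,u)$ is not $3$-connected, which does not by itself give a large series class. In the non-$N$-fragile case, your step~(1)---producing a strong element when $M\ba a,b$ has a flexible element---is the heart of the matter and cannot be dispatched by the vague appeal to ``the incrimination structure, the strong-stabilizer hypothesis, and allowable pivots''; the actual argument in \cite{BCOSW20} is lengthy. Likewise your step~(3), bounding the fan length, requires substantially more than observing that a long fan ``would contain a confining set'': the definition of confining set is quite specific (a $4$-cosegment, or two triads meeting in a point with a strong element), and a long fan does not automatically contain one. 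If you intend to supply a proof rather than a citation, these are the places where real work is needed; as written, your proposal is an outline of the landscape rather than a proof.
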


\noindent
\begin{figure}[htbp]
  \begin{subfigure}{0.32\textwidth}
    \centering
    \begin{tikzpicture}[rotate=90,scale=0.8,line width=1pt]
      \tikzset{VertexStyle/.append style = {minimum height=5,minimum width=5}}
      \clip (-2.5,0.5) rectangle (2.5,-5);
      \draw (0,-1) .. controls (-2.75,1.5) and (-3.25,-2.5) .. (0,-4);
      \draw (0,-1) .. controls (3,-1.5) and (3,-4.5) .. (0,-4);
      \draw (0.6,-1.6) -- (1,-3.6);
      \draw (0,-1) -- (0,-4);
      \draw (1.4,-1.8) .. controls (0.8,-0.2) and (-0,0) .. (-1.0,-1.0);

      \Vertex[x=0.6,y=-1.6,L=$x$,Lpos=90,LabelOut=True]{x}
      \Vertex[x=1,y=-3.6,L=$y$,Lpos=90,LabelOut=True]{y}

      \tikzset{VertexStyle/.append style = {fill=white}}
      \Vertex[x=0.5,y=-0.5,L=$a$,Lpos=180,LabelOut=True]{a}
      \Vertex[x=0.8,y=-2.6,L=$b$,Lpos=90,LabelOut=True]{b}
      \Vertex[x=1.6,y=-3,L=$u$,Lpos=90,LabelOut=True]{u}

    \end{tikzpicture}
    \caption{A Type~I gadget~$(\{x,y\},u)$.}
  \end{subfigure}
  \begin{subfigure}{0.32\textwidth}
    \centering
    \begin{tikzpicture}[rotate=90,scale=0.8,line width=1pt]
      \tikzset{VertexStyle/.append style = {minimum height=5,minimum width=5}}
      \clip (-2.5,0.5) rectangle (2.5,-5);
      \draw (0,-1) .. controls (-2.75,1.5) and (-3.25,-2.5) .. (0,-4);
      \draw (0,-1) .. controls (3,-1.5) and (3,-4.5) .. (0,-4);
      \draw (0,-2.45) -- (1.9,-2.55) -- (0,-3.75);
      \draw (0.95,-3.15) -- (0,-1.25);
      \draw (1.4,-1.8) .. controls (0.8,-0.2) and (-0,0) .. (-1.0,-1.0);

      \draw (0,-1) -- (0,-4);

      \Vertex[Lpos=0,LabelOut=True,L=$x$,x=1.9,y=-2.55]{x}
      \Vertex[Lpos=180,LabelOut=True,L=$y$,x=0.62,y=-2.49]{y}
      \Vertex[Lpos=90,LabelOut=True,L=$z$,x=0,y=-3.75]{z}

      \tikzset{VertexStyle/.append style = {fill=white}}
      \Vertex[x=0.5,y=-0.5,L=$a$,Lpos=180,LabelOut=True]{a}
      \Vertex[Lpos=180,LabelOut=True,L=$b$,x=1.26,y=-2.52]{b}
      \Vertex[Lpos=0,LabelOut=True,L=$u$,x=0.95,y=-3.15]{u}

    \end{tikzpicture}
    \caption{A Type~II gadget~$(x,y,u,z)$.}
  \end{subfigure}
  \begin{subfigure}{0.32\textwidth}
    \centering
    \begin{tikzpicture}[rotate=90,scale=0.8,line width=1pt]
      \tikzset{VertexStyle/.append style = {minimum height=5,minimum width=5}}
      \clip (-2.5,0.5) rectangle (2.5,-5);
      \draw (0,-1) .. controls (-2.75,1.5) and (-3.25,-2.5) .. (0,-4);
      \draw (0,-1.25) -- (1.8,-1.25) -- (1.8,-3.75) -- (0,-3.75);
      \draw (1.8,-2.5) -- (0.9,-1.25);
      \draw (1.4,-1.5) .. controls (0.8,0.1) and (-0,0.3) .. (-1.0,-0.7);
      \draw[dashed] (1.8,-1.25) -- (0,-3.75);
      \draw[dashed] (1.8,-3.75) -- (0,-1.25);

      \draw (0,-1) -- (0,-4);

      \Vertex[Lpos=90,LabelOut=True,L=$x$,x=1.8,y=-2.5]{x}
      \Vertex[Lpos=180,LabelOut=True,L=$y$,x=0.9,y=-1.25]{y}
      \Vertex[Lpos=90,LabelOut=True,L=$z$,x=1.8,y=-3.75]{z}

      \tikzset{VertexStyle/.append style = {fill=white}}
      \Vertex[x=0.5,y=-0.2,L=$a$,Lpos=180,LabelOut=True]{a}
      \Vertex[Lpos=0,LabelOut=True,L=$b$,x=1.5,y=-2.083]{b}
      \Vertex[Lpos=90,LabelOut=True,L=$u$,x=1.8,y=-1.25]{u}
      \Vertex[Lpos=0,LabelOut=True,L=$w$,x=0.9,y=-3.75]{w}

    \end{tikzpicture}
    \caption{A Type~III gadget~$(x,y,u,z,w)$.}
  \end{subfigure}
  \caption{Illustrations of the three types of gadget when $M \ba a,b$ is not $N$-fragile, conforming with \cref{bcosw-thm}(ii)(b), 
    under the assumption that $a$ fully blocks in the gadget.
    Black elements are in the basis~$B$, whereas white elements are not in $B$.}
  \label{figobstrs}
\end{figure}
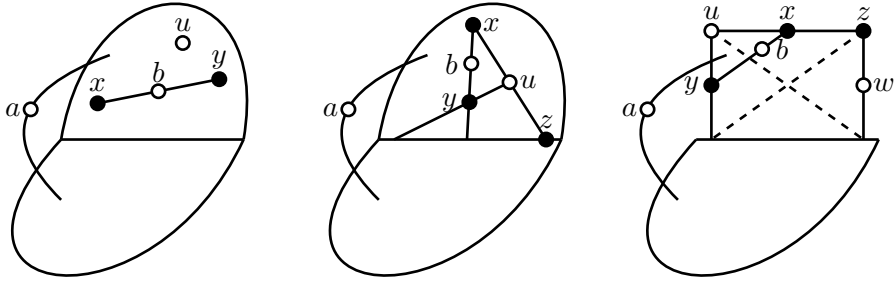

We require some terminology to handle the different outcomes when $M \ba a,b$ is not $N$-fragile.
Suppose \cref{bcosw-thm}(ii)(b) holds, so $M \ba a,b$ is not $N$-fragile.
Then we say that we have a \emph{gadget for $\{a,b\}$} that is either Type~I, Type~II, or Type~III:
\begin{itemize}
  \item if \cref{bcosw-thm}(ii)(b)(III) holds and $w$ is $(N,B)$-robust, then we say $(x,y,u,z,w)$ is a \emph{Type~III} gadget;
  \item if not, but \cref{bcosw-thm}(ii)(b)(II) or \cref{bcosw-thm}(ii)(b)(III) holds and $z$ is $(N,B)$-robust, then $(x,y,u,z)$ is a \emph{Type~II} gadget;
  \item otherwise, we say $(\{x,y\},u)$ is a \emph{Type~I} gadget.
\end{itemize}
Note, in particular, that if \cref{bcosw-thm}(ii)(b)(II) or \cref{bcosw-thm}(ii)(b)(III) holds but the $(N,B)$-robust elements (and hence also $N$-flexible elements) are contained in $\{x,y,u\}$, then we call this a Type~I gadget.
We also note that for a Type~III gadget $(x,y,u,z,w)$, the fan $(y,u,x,z,w)$ in maximal; whereas for a Type~II gadget $(x,y,u,z)$, either the fan $(y,x,u,z)$ is maximal, or it is contained in a maximal $5$-element fan $(y,u,x,z,w)$ where $w$ is not $(N,B)$-robust nor $N$-flexible.

Observe that for a Type~III gadget $(x,y,u,z,w)$, as $w$ is $N$-deletable in $M \ba a,b$, it follows that $x$ and $z$ are $N$-contractible in $M \ba a,b$; 
continuing in this way, each element in $\{x,u,z,w\}$ is $N$-flexible (whereas $y$ might only be $N$-contractible).
Similarly, for a Type~II gadget $(x,y,u,z)$, each element in $\{x,u,z\}$ is $N$-flexible (whereas $y$ might only be $N$-contractible).
For a Type~I gadget $(\{x,y\},u)$, the element $u$ is $N$-flexible in $M \ba a,b$ (whereas $x$ and $y$ might only be $N$-contractible);
to see this, observe that $u$ is $N$-deletable (as it is $(N,B)$-strong) and if $u$ is not $N$-contractible in $M \ba a,b$, then neither $x$ nor $y$ is $N$-deletable in $M \ba a,b$, so no element in $\{x,y,u\}$ is $N$-flexible, contradicting that $M \ba a,b$ is not $N$-fragile.
We summarise this in the next lemma.

\begin{lemma}
  \label{flexigadgets}
  \ha\ Suppose $|E(M)| \ge |E(N)| + 10$, and \cref{bcosw-thm}(ii)(b) holds, so $M \ba a,b$ is not $N$-fragile.  Suppose that either
  \begin{itemize}
    \item $(\{x,y\},u)$ is a Type~I gadget for $\{a,b\}$,
    \item $(x,y,u,z)$ is a Type~II gadget for $\{a,b\}$, or
    \item $(x,y,u,z,w)$ is a Type~III gadget for $\{a,b\}$.
  \end{itemize}
  Then $u$ is $N$-flexible in $M \ba a,b$, and $x$ and $y$ are $N$-contractible in $M \ba a,b$.
  Moreover, 
  \begin{itemize}
    \item if the gadget is Type~II, then $x$ and $z$ are $N$-flexible in $M \ba a,b$; and
    \item if the gadget is Type~III, then $x$, $z$, and $w$ are $N$-flexible in $M \ba a,b$.
  \end{itemize}
\end{lemma}

Let $G$ be a gadget for $\{a,b\}$.
When $G$ is a Type~I gadget $(\{x,y\},u)$, we say that $e$ is \emph{in} the gadget when $e \in \{x,y,u\}$; similarly, when $G$ is a Type~II gadget $(x,y,u,z)$ or a Type~III gadget $(x,y,u,z,w)$, we say that $e$ is \emph{in} the gadget when $e \in \{x,y,u,z\}$ or $e \in \{x,y,u,z,w\}$ respectively.

Suppose $G$ is a Type~I gadget $(\{x,y\},u)$, so $\{x,y,u\}$ is $3$-separating in $M \ba a,b$. 
Then either $b \in \cl(\{x,y\})$ and $a \in \cocl(\{x,y,u,b\})$, in which case we say $a$ \emph{blocks} in the gadget~$G$; or $a \in \cl(\{x,y\})$ and $b \in \cocl(\{x,y,u,b\})$, in which case we say $b$ \emph{blocks} in the gadget~$G$.
Suppose $a$ blocks in the gadget~$G$.
We say that $a$ \emph{fully blocks} in the gadget~$G$, or $a$ \emph{fully blocks} $(\{x,y\},u)$, if $a \notin \cl(\{x,y,u\})$; otherwise, $a$ \emph{does not fully block} $(\{x,y\},u)$ when $a \in \cl(\{x,y,u\})$.

Similarly, when $G$ is a Type~II gadget $(x,y,u,z)$ or a Type~III gadget $(x,y,u,z,w)$ with $b \in \cl(\{x,y\})$, and $a \in \cocl(\{x,y,u,z,b\})$ or $a \in \cocl(\{x,y,u,z,w,b\})$ respectively, we say that $a$ \emph{blocks} in the gadget~$G$;
and we say that $a$ \emph{fully blocks} in the gadget~$G$ when $a$ blocks and, moreover, $a \notin \cl(\{x,y,u,z\}$ or $a \notin \cl(\{x,y,u,z,w\}$ respectively.

\medskip

We also use the following:
\begin{lemma}[{\cite[Lemma~8.1]{BCOSW20}}]
  \label{oldwin}
  \ha\ Suppose that \cref{bcosw-thm}(ii)(b) holds.
  If $\{a,b\} \subseteq \cl_M(\{x,y\})$, then $|E(M)| \le |E(N)|+8$.
\end{lemma}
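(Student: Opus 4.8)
The plan is to use \cref{bcosw-thm}(ii)(b) together with the extra hypothesis to force a rigid configuration around $\{a,b,x,y,u\}$, reduce the obstruction to how $a$ and $b$ sit on the line $\cl_M(\{x,y\})$, and exploit that $M\ba a$ and $M\ba b$ are $\mathbb{P}$-representable while $M$ is not.

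\emph{Local structure.} Since $\{d,x,y\}$ is a triangle of $M$ for some $d\in\{a,b\}$ and $M$ is simple (being $3$-connected on at least four elements), the hypothesis $\{a,b\}\subseteq\cl_M(\{x,y\})$ forces $\{a,b,x,y\}$ to be a $4$-point segment of $M$; in particular both $\{a,x,y\}$ and $\{b,x,y\}$ are triangles. I would then show $u\notin\cl_M(\{x,y\})$, since otherwise $\{x,y,u\}$ would be a triangle--triad of the $3$-connected matroid $M\ba a,b$, which is impossible as $|E(M\ba a,b)|\ge 5$; hence $r_M(\{a,b,x,y,u\})=3$, and as $\{x,y,u,a,b\}$ is a cocircuit, $(\{a,b,x,y,u\},E(M)-\{a,b,x,y,u\})$ is an exact $3$-separation with $u$ a coloop of $M|\{a,b,x,y,u\}$. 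Deleting elements of the cocircuit $\{x,y,u,a,b\}$ then shows that $\{x,y,a\}$ is a triangle--triad of $M\ba b,u$ and $\{x,y,b\}$ is a triangle--triad of $M\ba a,u$, while \cref{no4fans} (applied to $M$) together with the incomparability of cocircuits shows that $\{x,y,u,a,b\}$ is the only cocircuit of $M$ whose restriction to $E(M\ba a,b)$ is the triad $\{x,y,u\}$.

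\emph{Reducing to the line $\cl_M(\{x,y\})$.} Using $\{a,b\}\subseteq\cl_M(\{x,y\})$, simplicity, and submodularity, I would check that $B\triangle\{x,y,a,b\}=(B-\{x,y\})\cup\{a,b\}$ is a basis of $M$, so the incrimination of $(M,A)$ by $\{x,y,a,b\}$ says precisely that $\det A[\{x,y\},\{a,b\}]$ is $0$ or lies outside $\mathbb{P}$. Since the columns $a$ and $b$ of $A$ are supported on the rows $\{x,y\}$, every subdeterminant of $A$ meeting both columns factors as $\pm\det A[\{x,y\},\{a,b\}]$ times a subdeterminant of the $\mathbb{P}$-matrix $A-a-b$, so $A$ is a $\mathbb{P}$-representation of $M$ as soon as $\det A[\{x,y\},\{a,b\}]\in\mathbb{P}\setminus\{0\}$ --- impossible, as $M$ is an excluded minor. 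Thus the whole obstruction is how the points $a,b$ lie on the line $\cl_M(\{x,y\})$: because $M\ba a$ and $M\ba b$ are $\mathbb{P}$-representable and $M\ba a,b$ is (strongly) $\mathbb{P}$-stabilized by $N$, a representation of $M\ba a,b$ extends to $M\ba a$ by placing $b$ on this line and to $M\ba b$ by placing $a$ on it, and the gadget hypothesis (all $N$-flexible elements of $M\ba a,b$ confined to $\{u,x,y\}$, $\{u,x,y,z\}$ or $\{u,x,y,z,w\}$, with $u$ being $(N,B)$-strong) keeps the constraints on these placements weak. One then argues that if $\cl_M(\{x,y\})$ consists of only the four points $x,y,a,b$, then --- using that $\mathbb{P}$ supports $U_{2,4}$ since $N$ is non-binary --- the placements can be chosen compatibly so that $\det A'[\{x,y\},\{a,b\}]\in\mathbb{P}\setminus\{0\}$ for a companion $\mathbb{P}$-matrix $A'$ of $M$, making $M$ $\mathbb{P}$-representable, a contradiction. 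Hence $\cl_M(\{x,y\})$ contains a fifth element $c$, and then $(c,x,y,u)$ is a $4$-element fan of $M\ba a,b$ with rim the distinguished triad $\{x,y,u\}$.

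\emph{The remaining case, and the main obstacle.} Finally, suppose $\cl_M(\{x,y\})$ is a segment of size $m\ge 5$. If $\mathbb{P}$ does not support $U_{2,m}$, then $M$ has a non-$\mathbb{P}$-representable uniform restriction, so $M$ is itself uniform of rank $2$; its cocircuits then have size $|E(M)|-1$, and \cref{bcosw-thm}(ii)(b) provides one of size $5$, so $|E(M)|=6\le|E(N)|+8$. If $\mathbb{P}$ does support $U_{2,m}$, one pushes the placement/stabilizer argument through the fan $(c,x,y,u)$ and the allowable pivots of \cref{allowablexyrow,allowablenonxy}, using the bolstered-basis hypothesis to stop the obstruction from migrating, to produce a confining set in $M\ba a,b$, whence $|E(M)|\le|E(N)|+9$ by \cref{confiningset}; the ``second'' triangle $\{a,x,y\}$ (which is not available in the general case of \cref{bcosw-thm}(ii)(b)) then supplies one further element forced outside every $N$-minor, improving the bound to $|E(M)|\le|E(N)|+8$. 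I expect the local structure and the $2\times 2$ reduction to be routine, and this last step --- turning the determinantal rigidity, the fan/segment structure, and the stabilizer into a genuine confining set while controlling allowable pivots, and then sharpening $+9$ to $+8$ --- to be the main obstacle, and the part that leans hardest on the machinery of \cite{BCOSW20}.
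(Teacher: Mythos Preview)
This lemma is not proved in the present paper; it is quoted verbatim from the prequel \cite{BCOSW20} (their Lemma~8.1), so there is no in-paper argument to compare against. What follows is therefore an assessment of your proposal on its own merits.

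Your local-structure paragraph and the $2\times 2$ factorisation are correct: with $\{a,b\}\subseteq\cl_M(\{x,y\})$ the columns $a,b$ of $A$ are supported on rows $\{x,y\}$, and generalised Laplace expansion gives $\det A[Z]=\pm\det A[\{x,y\},\{a,b\}]\cdot\det A[Z-\{x,y,a,b\}]$ whenever $\{x,y,a,b\}\subseteq Z$, and $\det A[Z]=0$ otherwise when $\{a,b\}\subseteq Z$. One can also check, using $\cl(\{x,y\})=\cl(\{a,b\})$, that these determinant values match the basis structure of $M$, so indeed $M$ would be $\mathbb{P}$-representable as soon as $\det A[\{x,y\},\{a,b\}]\in\mathbb{P}\setminus\{0\}$.

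The genuine gap is the next step. You write that when $\cl_M(\{x,y\})=\{x,y,a,b\}$ ``the placements can be chosen compatibly'' to force $\det A'[\{x,y\},\{a,b\}]\in\mathbb{P}\setminus\{0\}$. But this is exactly what the strong-stabilizer hypothesis forbids: once a $\mathbb{P}$-representation of $M\ba a,b$ is fixed, the extension to $M\ba a$ determines the column~$b$ up to a scalar, and the extension to $M\ba b$ determines the column~$a$ up to a scalar. There is no freedom to ``choose'' these placements, and that $\mathbb{P}$ supports $U_{2,4}$ is irrelevant here --- the $2\times2$ block is pinned down by the rest of $M$, not by the line alone. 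So the dichotomy you set up (either the line has four points and we contradict excluded-minor-ness, or it has five and we get a fan) does not get off the ground.

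The final paragraph compounds this. ``Push the placement/stabilizer argument \ldots\ to produce a confining set'' is not a proof but a hope: you have not said what the confining set is, relative to which basis, or why the bolstered-basis hypothesis delivers it. And ``the second triangle supplies one further element forced outside every $N$-minor, improving $+9$ to $+8$'' is an assertion without mechanism. The actual argument in \cite{BCOSW20} does not proceed by manufacturing freedom on the line; it exploits the symmetry between $a$ and $b$ (both now lie in $\cl(\{x,y\})$, so the roles of $a$ and $b$ in \cref{bcosw-thm}(ii)(b) can be swapped) together with the cocircuit $\{x,y,u,a,b\}$ to bound $|E(M)|$ directly.
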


\begin{lemma}[{\cite[Lemma~8.2]{BCOSW20}}]
  \label{oldwin2}
  \ha\ Suppose that \cref{bcosw-thm}(ii)(b) holds and there is some $p \in (B - \{x, y\}) \cap \cl(\{u, x, y\})$ such that $\{a, b\} \subseteq \cl_M(\{p, x, y\})$. Then $|E(M)| \le |E(N)| +8$.
\end{lemma}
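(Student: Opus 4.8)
The plan is to reduce to \cref{oldwin}. We may assume $|E(M)|\ge|E(N)|+9$, as otherwise there is nothing to prove; in particular $M\ba a,b$ has more than four elements.

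First I would convert the hypotheses into closure statements inside $M$. Deletion does not change ranks of subsets, so $p\in\cl(\{u,x,y\})$ in $M\ba a,b$ gives $p\in\cl_M(\{u,x,y\})$, whence $\cl_M(\{p,x,y\})\subseteq\cl_M(\{u,x,y\})$; combined with $\{a,b\}\subseteq\cl_M(\{p,x,y\})$ this yields $\{a,b\}\subseteq\cl_M(\{u,x,y\})$. Since $\{u,x,y\}$ is a triad of the $3$-connected matroid $M\ba a,b$ but, having more than four elements on the other side, cannot be a triangle-triad, $r_M(\{u,x,y\})=3$; together with the hypothesis that $\{x,y,u,a,b\}$ is a cocircuit of $M$, this makes $\{x,y,u,a,b\}$ a rank-$3$ cocircuit of $M$, and $F:=\cl_M(\{u,x,y\})$ a rank-$3$ flat of $M$ containing $p$ and all of $\{x,y,u,a,b\}$.

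Next I would read the relevant entries off a companion matrix. Let $A$ be the $B\times B^*$ companion $\mathbb{P}$-matrix of \cref{bcosw-thm}(ii)(b), and relabel so that the triangle of $M$ supplied by \cref{bcosw-thm}(ii)(b) is $\{b,x,y\}$ (the case $\{a,x,y\}$ is symmetric under interchanging $a$ and $b$). From $M\ba a=M[I\mid A-a]$ and the triangle $\{b,x,y\}$ of $M\ba a$, column $b$ of $A$ is supported on $\{x,y\}$; from $M\ba b=M[I\mid A-b]$ and $a\in\cl_M(\{p,x,y\})$, column $a$ of $A$ is supported on $\{p,x,y\}$; and from $p\in\cl_M(\{u,x,y\})$ with $p$ a unit basis column, $A_{pu}\neq0$. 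If $A_{pa}=0$, then column $a$ is supported on $\{x,y\}$, so $\{a,b\}\subseteq\cl_M(\{x,y\})$ and \cref{oldwin} gives $|E(M)|\le|E(N)|+8$; so I may assume $A_{pa}\neq0$, and in particular that $\{p,x,y\}$ is not a triangle of $M\ba a,b$.

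It remains to treat the case $A_{pa}\neq0$. I would take a circuit $C$ with $p\in C\subseteq\{p,x,y,u\}$; as $M\ba a,b$ is simple and $\{p,x,y\}$ is not a triangle, $C$ is one of the triangles $\{p,x,u\}$, $\{p,y,u\}$, or $C=\{p,x,y,u\}$. In the two triangle cases the relevant one of $A_{xu},A_{yu}$ is nonzero, so the allowable pivot $A^{xu}$ or $A^{yu}$ (allowable by \cref{allowablexyrow}) moves $u$ into the basis in place of $x$ or $y$; tracking the supports of the transformed columns $a$, $b$ via the pivot formula returns us to a strictly simpler instance of the configuration above, or directly to the case $A_{pa}=0$, and I iterate. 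In the remaining case $C=\{p,x,y,u\}$ we have $u\in\cl_M(\{p,x,y\})$, so $F=\cl_M(\{p,x,y\})$ is a rank-$3$ flat containing the $5$-element cocircuit $\{x,y,u,a,b\}$ and $p$; its complement is then a hyperplane of $M$, and I would use this very constrained ``rank-$3$ flat containing a cocircuit'' picture, together with \cref{minor3conn} and the $3$-connectedness of $N$, to show $M$ has an $N$-minor meeting $F$ in only boundedly many elements, again forcing $|E(M)|\le|E(N)|+8$. The main obstacle is this residual case: verifying that each pivot strictly decreases a suitable complexity measure so the iteration terminates, and, in the $4$-circuit subcase, controlling how the $N$-minor survives contraction down through $F$; this is where the interplay between $F$ being a flat, $\{x,y,u,a,b\}$ being a cocircuit, and the connectivity of $N$ has to be handled with care.
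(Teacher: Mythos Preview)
The paper does not give a proof of this lemma; it is quoted verbatim from the prequel \cite[Lemma~8.2]{BCOSW20} and used as a black box. So there is no in-paper argument to compare your proposal against.

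As a standalone attempt, your proposal is not a proof but a sketch that explicitly leaves the hard case open. Two concrete gaps:

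\begin{itemize}
\item In the triangle subcases you pivot on $xu$ (or $yu$) and assert this ``returns us to a strictly simpler instance of the configuration above, or directly to the case $A_{pa}=0$''. Neither claim is justified. After the allowable pivot $A^{xu}$ the incriminating quadruple becomes $\{u,y,a,b\}$ with $\{u,y\}$ playing the former role of $\{x,y\}$. To invoke \cref{oldwin} you would need $\{a,b\}\subseteq\cl_M(\{u,y\})$, but you only know $b\in\cl_M(\{x,y\})$ and $a\in\cl_M(\{p,x,y\})$; there is no reason either should lie in $\cl_M(\{u,y\})$. Nor do you define any complexity measure that strictly decreases, so ``I iterate'' is not an argument. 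Worse, the entire framework of \cref{bcosw-thm}(ii)(b)---the bolstered basis, the identification of $u$ as the unique $(N,B)$-strong element in a triad $\{u,x,y\}$, the triangle $\{d,x,y\}$---is tied to the specific basis $B$; you have not argued that any of this structure survives the pivot to $B\triangle\{x,u\}$, so you cannot even re-apply the lemma's hypotheses, let alone \cref{oldwin}.
\item In the $4$-circuit subcase you write only that you ``would use this very constrained \ldots\ picture'' to bound how many elements of an $N$-minor meet $F$. This is a statement of intent, not a proof, and you yourself flag it as ``the main obstacle''.
\end{itemize}

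In short, the reduction to \cref{oldwin} is only carried out in the trivial case $A_{pa}=0$; everything else is unfinished. If you want to complete an argument along these lines, you will need to consult the actual proof in \cite{BCOSW20}.
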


\section{Essential elements in almost-fragile minors}

In this section, we prove our main result, \cref{thegrandfantasy}.
We work under the following hypotheses throughout the remainder of this paper.
\begin{itemize}[label=$(\clubsuit)$]
  \item Let $\mathbb{P}$ be a partial field.
    Let $M$ be an excluded minor for the class of $\mathbb{P}$-representable matroids, and let $N$ be a non-binary $3$-connected strong $\mathbb{P}$-stabilizer for the class of $\mathbb{P}$-representable matroids, where $M$ has an $N$-minor.\label{hypotheses2}
\end{itemize}
  Loosely speaking, \cref{thegrandfantasy} says that if $M$ has no triads and is sufficiently large relative to $N$, 
  and there is a pair of elements $\{a,b\}$ in $M$ such that $M \ba a,b$ is $3$-connected with an $N$-minor,
  and $M \ba a,b$ is not $N$-fragile, then we can easily find elements that are $N$-essential in a minor of $M \ba a,b$ that is obtained by deleting or contracting a single element.

  First, we prove a lemma which shows that the requirement that $M$ has no triads is only a mild assumption, due to \cref{osvdelta}.
  Recall that $\Delta^*(M)$ denotes the class of matroids that are \dY-equivalent to $M$ or $M^*$.

\newcommand{\hb}{\hyperref[hypotheses2]{$(\clubsuit)$}}

\begin{lemma}
  \label{notriads}
  \hb\ Suppose that $|E(M)| \ge |E(N)| + 10$.
  Then there exists a matroid~$M_1 \in \Delta^*(M)$ such that $M_1$ has a pair of elements $\{a,b\}$ for which $M_1 \ba a,b$ is $3$-connected and has a $\Delta^*(N)$-minor, and $M_1$ has no triads.
\end{lemma}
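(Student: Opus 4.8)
The plan is to invoke Theorem~\ref{detachsetup} to get a matroid $M' \in \Delta^*(M)$ with a delete pair $\{a,b\}$ for which $M' \ba a,b$ is $3$-connected with an $\{N,N^*\}$-minor, and then to eliminate triads by performing $\Yd$ exchanges. The key point is that $\Yd$ exchange is, by Proposition~\ref{osvdelta}, an operation under which the class of excluded minors is closed, and (by definition of $\Delta^*$) it keeps us inside $\Delta^*(M)$, provided the triad on which we operate is independent. So the first step is to observe that $M'$ is an excluded minor, hence $3$-connected, and that any triad $T^*$ of a $3$-connected matroid with $r(M') \ge 4$ is automatically independent (a dependent triad would be a circuit contained in a cocircuit of size $3$, forcing a small separation), so the $\Yd$ exchange $\nabla_{T^*}(M')$ is defined and again lies in $\Delta^*(M)$.

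The second step is to control what a $\Yd$ exchange does to triads: dualising Corollary~\ref{no4fans} and Lemma~\ref{dyconn}, since an excluded minor has no $4$-element fans, a $\Yd$ exchange on a triad $T^*$ of $M'$ yields a $3$-connected matroid (not merely $3$-connected up to parallel pairs), and the triad $T^*$ is destroyed: it becomes a triangle, and a routine count shows the exchange cannot create a new triad meeting the old one in a way that would give a $4$-fan. The natural measure to drive the induction is the number of triads of the current matroid; I would argue this strictly decreases under a suitably chosen $\Yd$ exchange, so after finitely many steps we reach a matroid $M_1 \in \Delta^*(M)$ with no triads. One must be slightly careful that a single $\Yd$ exchange might create triads elsewhere, so rather than "number of triads" the cleaner potential is perhaps the number of elements lying in some triad, or one processes a maximal collection of disjoint triads at once; either way the bookkeeping is elementary given the no-$4$-fan restriction.

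The third step is to track the delete pair and the $\{N,N^*\}$-minor through these exchanges. Here I would choose the triads on which we operate to avoid $\{a,b\}$ whenever possible; when a triad $T^*$ must meet $\{a,b\}$, one uses that $M' \ba a,b$ is $3$-connected and the standard interaction between $\Yd$ exchange and deletion/contraction (a $\Yd$ exchange on $M'$ restricting appropriately to a $\dY$ or $\Yd$ exchange, or a trivial operation, on $M' \ba a,b$) to produce a corresponding pair $\{a,b\}$ in $M_1$ with $M_1 \ba a,b$ still $3$-connected. Since $\dY$ and $\Yd$ exchanges send $N$ to a matroid in $\Delta^*(N)$ and preserve having such a minor up to passing to $\Delta^*(N)$, the minor hypothesis is maintained; the element-count hypothesis $|E(M_1)| = |E(M)| \ge |E(N)|+10$ is preserved since these exchanges do not change the ground-set size.

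The main obstacle I anticipate is the third step: ensuring that the delete pair $\{a,b\}$ survives with $M_1 \ba a,b$ still $3$-connected after exchanges on triads that happen to contain $a$ or $b$, and simultaneously that the $3$-connectivity of $M_1 \ba a,b$ is not spoiled by the $\Yd$ exchanges turning some series structure into a parallel one. The cleanest route is probably to first argue (using Lemma~\ref{dyconn} and its dual, plus $|E(M)|-|E(N)|$ being large) that we may take all the exchanged triads disjoint from $\{a,b\}$, reducing to the benign case; if a triad genuinely meets $\{a,b\}$ one would need a short separate analysis of how $T^* \cap \{a,b\} \neq \emptyset$ interacts with the $3$-connectivity of $M' \ba a,b$, which is where the real work lies.
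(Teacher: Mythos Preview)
Your approach matches the paper's, but you overcomplicate the third step by worrying about a case that cannot occur. The ``main obstacle'' you anticipate---a triad $T^*$ meeting $\{a,b\}$---is impossible: if $a \in T^*$, then $T^*-a$ contains a cocircuit of $M_0\ba a$ of size at most two, so $M_0\ba a,b$ has a cocircuit of size at most two, contradicting its $3$-connectedness. The paper records the slightly stronger fact that $\cocl_{M_0}(T^*)\cap\{a,b\}=\emptyset$ and $a\notin\cocl_{M_0}(T^*\cup b)$, $b\notin\cocl_{M_0}(T^*\cup a)$, all immediate from $3$-connectedness of $M_0\ba a,b$. This is exactly what is needed for the $\Yd$ exchange on $T^*$ to commute with deleting $a,b$: one gets $M_0'\ba a,b \cong \nabla_{T^*}(M_0\ba a,b)$, which directly gives the $\Delta^*(N)$-minor.

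For the $3$-connectedness of $M_0'\ba a,b$, the paper's argument is also cleaner than your sketch: by the dual of Lemma~\ref{dyconn}, failure of $3$-connectedness forces $T^*$ into a $4$-element fan $F$ of $M_0\ba a,b$; since $a,b\in\cl_{M_0}(E(M_0\ba a,b)-T^*)$, this $F$ is a $4$-element fan of $M_0$ itself, contradicting Corollary~\ref{no4fans}. No separate analysis of $T^*\cap\{a,b\}\neq\emptyset$ is needed, and the size hypothesis plays no role at this stage. (Both you and the paper are informal about termination of the iterated $\Yd$ process; the paper simply asserts one repeats ``until no triads remain''.)
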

\begin{proof}
  By \cref{detachsetup}, there is some $M_0 \in \Delta^*(M)$ such that $M_0$ has a pair of elements $\{a,b\}$ for which $M_0 \ba a,b$ is $3$-connected and has an $\{N,N^*\}$-minor.
  Let $T^*$ be a triad of $M_0$, and let $M_0'$ be obtained from $M_0$ by a \Yd\ exchange on $T^*$.
  Since $M_0 \ba a,b$ is $3$-connected, 
  $\cocl_{M_0}(T^*) \cap \{a,b\} = \emptyset$ and, moreover, $a \notin \cocl_{M_0}(T^* \cup b)$ and $b \notin \cocl_{M_0}(T^* \cup a)$.
  Hence $T^*$ is a triad of $M_0 \ba a,b$, and $M_0' \ba a,b$ is isomorphic to the matroid obtained from $M_0 \ba a,b$ by performing a \Yd\ exchange on $T^*$.
  In particular, note that $M_0' \ba a,b$ has a $\Delta^*(N)$-minor.

  Suppose $M_0' \ba a,b$ is not $3$-connected.
  Then $T^*$ is in a $4$-element fan~$F$ of $M_0 \ba a,b$, by \cref{dyconn}.
  But $a,b \in \cl_{M_0}(E(M_0 \ba a,b)-T^*)$.
  Thus $F$ is a $4$-element fan of $M_0$, contradicting \cref{no4fans}.
  So $M_0' \ba a,b$ is $3$-connected.

  It now follows that, starting from $M_0$ and repeatedly performing \Yd\ exchanges until no triads remain, we obtain a matroid $M_1 \in \Delta^*(M)$ with no triads, and having the property that $M_1 \ba a,b$ is $3$-connected and has a $\Delta^*(N)$-minor, as required.
\end{proof}

Our main result is under the assumption that $M \ba a,b$ is not $N$-fragile and the gadget for $\{a,b\}$ is Type~I.  We work towards \cref{wmatype1}, which shows that if the gadget for $\{a,b\}$ is not Type~I, and $a$ blocks in this gadget, then we can switch to the delete pair $\{a,x\}$ for $N$, in which case the gadget for $\{a,x\}$ is Type~I.

Recall that when \hb\ holds, $N$ is non-binary and $3$-connected, so $|E(N)| \ge 4$ and $N$ is simple and cosimple.
In particular, if $|E(M)| \ge |E(N)| + 10$, then $|E(M)| \ge 14$.

\begin{lemma}
  \label{switchbxy}
  \hb\ Suppose $M$ has a pair of elements $\{a,b\}$ such that $M\ba a,b$ is $3$-connected with an $N$-minor, $|E(M)| \ge |E(N)| + 10$, and $M \ba a,b$ is not $N$-fragile, so \cref{bcosw-thm}(ii)(b) holds.
  Assume the element~$a$ blocks in the gadget for $\{a,b\}$.
  Then $M \ba a,e$ is $3$-connected with an $N$-minor for each $e \in \{x,y\}$.
\end{lemma}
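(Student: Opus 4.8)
The plan is to extract the relevant structure from \cref{bcosw-thm}(ii)(b) and then argue separately about the $N$-minor and about $3$-connectivity, for each of $e\in\{x,y\}$.

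From \cref{bcosw-thm}(ii)(b) we obtain the cocircuit $\{x,y,u,a,b\}$ of $M$, a triangle $\{d,x,y\}$ of $M$ for some $d\in\{a,b\}$, the fact that $\{x,y,u\}$ is the unique triad of $M\ba a,b$ containing $u$, and that $u\in B^*\setminus\{a,b\}$ is $(N,B)$-strong in $M\ba a,b$; moreover, from the gadget discussion preceding the lemma, $x$ and $y$ are $N$-contractible in $M\ba a,b$. Using \cref{strongprops}(2) applied to $\{x,y,u\}$ together with \cref{oldwin} (and $|E(M)|\ge|E(N)|+10$), I would first pin down that exactly one of $a,b$ lies in $\cl_M(\{x,y\})$, namely $d$; I give the argument in the case $d=b$, so that $\{b,x,y\}$ is a triangle of $M$ (hence also of $M\ba a$), the case $d=a$ requiring a similar analysis with the triangle $\{a,x,y\}$ in the corresponding place.

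For the $N$-minor, the point is that contracting an element of the triangle $\{b,x,y\}$ of $M\ba a$ creates a parallel pair. Contracting $y$ makes $\{b,x\}$ a parallel pair of $M\ba a/y$, so $M\ba a,x/y\cong M\ba a,b/y$; since $y$ is $N$-contractible in $M\ba a,b$ the right-hand side has an $N$-minor, hence so does $M\ba a,x$. Symmetrically, contracting $x$ makes $\{b,y\}$ parallel in $M\ba a/x$, so $M\ba a,y/x\cong M\ba a,b/x$, which has an $N$-minor as $x$ is $N$-contractible in $M\ba a,b$; thus $M\ba a,y$ has an $N$-minor.

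For $3$-connectivity I would first show $M\ba a$ is $3$-connected, then that deleting $x$ (or $y$) preserves this. If $(P,Q)$ were a $2$-separation of $M\ba a$, then since $M\ba a,b=(M\ba a)\ba b$ is a $3$-connected minor of $M\ba a$ with at least $12$ elements, \cref{minor3conn} forces (say) $b\in P$ with $|P|\le2$; as $M$ has no loops, coloops, $2$-circuits or $2$-cocircuits, $P=\{b,c\}$ is a series pair of $M\ba a$, so $\{a,b,c\}$ is a triad of $M$, and then orthogonality with $\{b,x,y\}$ gives $c\in\{x,y\}$, so $\{a,b,c\}\cup\{b,x,y\}$ is a $4$-element fan of $M$, contradicting \cref{no4fans}. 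Now $M\ba a$ is $3$-connected with a triangle $\{b,x,y\}$ through $x$; no parallel pair can arise in $M\ba a\ba x$ (it would be a $2$-circuit of $M$), and by the standard analysis of deleting an element of a triangle of a $3$-connected matroid (using \cref{gutsandcoguts}), $M\ba a\ba x$ is $3$-connected unless $x$ lies in a triad of $M\ba a$. The crux is to rule this out: by orthogonality with $\{b,x,y\}$ such a triad $\{x,p,q\}$ meets $\{b,y\}$, and using that $\{x,y,u,b\}$ is a cocircuit of $M\ba a$ whereas $\{x,y,u\}$ is not, together with minimality of cocircuits, such a triad either yields a $4$-element fan of $M$ (contradicting \cref{no4fans}) or, via repeated cocircuit elimination, an extra cocircuit of $M$ incompatible with $\{x,y,u,a,b\}$. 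This gives that $M\ba a,x$, and by the symmetric argument $M\ba a,y$, is $3$-connected. I expect this last step — eliminating a triad of $M\ba a$ through $x$ (equivalently, a series pair of $M\ba a\ba x$) — to be the main obstacle, since it is exactly what separates "$3$-connected up to series classes" from genuine $3$-connectivity, and it forces one to thread orthogonality with both $\{b,x,y\}$ and $\{x,y,u,a,b\}$ through the absence of $4$-fans; the case $d=a$ adds bookkeeping but no essentially new idea.
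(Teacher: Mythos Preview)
Your argument for the $N$-minor is fine, and essentially matches the paper's: both convert $N$-contractibility of $x$ and $y$ into $N$-deletability via the triangle $\{b,x,y\}$.

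The gap is in the $3$-connectivity argument. You assert that since $x$ lies in the triangle $\{b,x,y\}$ of the $3$-connected matroid $M\ba a$, the deletion $M\ba a\ba x$ is $3$-connected unless $x$ lies in a triad of $M\ba a$, and you then focus entirely on ruling out such a triad. But that implication is \emph{false} in general: an element of a $3$-connected matroid can lie in a triangle, lie in no triad, and still have its deletion fail to be $3$-connected via a $2$-separation $(U,V)$ with $|U|,|V|\ge 3$. (For a concrete rank-$4$ example over $\mathbb{R}$, take $U=\{1,2,3\}$ collinear along $\mathrm{span}(b_4,v)$ for a generic $v\in\mathrm{span}(b_1,b_2,b_3)$, take $V=\{4,5,6,7\}$ to be four generic points spanning $\mathrm{span}(b_1,b_2,b_3)$, and let $e$ lie on the line through $1$ and $4$; then $M'$ is $3$-connected, $e$ is in the triangle $\{1,4,e\}$ and in no triad, yet $M'\ba e$ has the $2$-separation $(U,V)$.) Neither Bixby's Lemma nor \cref{gutsandcoguts} supplies the missing step: Bixby only guarantees that \emph{one} of $\co(M'\ba e)$, $\si(M'/e)$ is $3$-connected, and in this configuration \cref{gutsandcoguts} does not apply because one of the two intersections is empty. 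So the case you flag as ``the main obstacle'' --- ruling out a triad of $M\ba a$ through $x$ --- is not the main obstacle at all; the genuine work is to exclude a large $2$-separation of $M\ba a\ba x$.

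The paper's proof does exactly this: it takes an arbitrary $2$-separation $(U,V)$ of $M\ba a,x$ with $y\in V$ and $V$ not a series class, coclose $U$, and then uses the triad $\{b,y,u\}$ of $M\ba a,x$ (coming from the cocircuit $\{b,x,y,u\}$ of $M\ba a$) together with the triangle $\{b,x,y\}$ to force $b\in U$ and $u\in V$, after which $b$ lands in both the closure and coclosure of $V\cup x$ in $M\ba a$, yielding a contradiction. The gadget structure --- specifically the cocircuit $\{x,y,u,b\}$ and the uniqueness of the triad $\{u,x,y\}$ --- is essential here, and your outline does not invoke it in the right place.
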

\begin{proof}
  Observe that $M \ba a$ is $3$-connected.
  Suppose $\{x,y\}$ is in a triad $\{x,y,t\}$ of $M \ba a$.
  Since $M \ba a,b$ is $3$-connected, $t \neq b$.
  Moreover, $t \neq u$ since $\{x,y,u,b\}$ is a cocircuit of $M \ba a$.
  So $\{x,y,u,t\}$ is a $4$-cosegment in $M \ba a,b$, contradicting that $\{x,y,u\}$ is the unique triad in $M \ba a,b$ containing $u$.
  Thus $M \ba a$ has no triad containing $\{x,y\}$.

  Now, if $M \ba a,x$ is not $3$-connected, then $M \ba a,x$ has a $2$-separation $(U,V)$ where $y \in V$ and $V$ is not a series class.
  It follows that $(\cocl(U),V-\cocl(U))$ is also a $2$-separation of $M \ba a,x$, so we may assume that $U$ is coclosed.
  Since $\{b,y,u\}$ is a triad in $M \ba a,x$, either $b$ or $u$ is in $V$.
  But if $b \in V$, then $x \in \cl_{M \ba a}(V)$, so $(U,V \cup x)$ is a $2$-separation of $M \ba a$, a contradiction.
  So $b \in U$ and $u \in V$.
  Now, $b \in \cl_{M \ba a}(V \cup x)$ and $b \in \cocl_{M \ba a}(V \cup x)$, where $V \cup x$ is $3$-separating in $M \ba a$, so $(U-b, V \cup \{x,b\})$ is a contradictory $2$-separation of $M \ba a$ unless $|U-b| = 1$.
  In the exceptional case, $U \cup x$ is a triad of $M \ba a$ that contains $b$, contradicting that $M \ba a,b$ is $3$-connected.
  We deduce that $M \ba a,x$ and, by symmetry, $M \ba a,y$ are $3$-connected.

  Finally, since $u$ is $(N,B)$-strong in $M \ba a,b$, the matroid $M \ba a,u$ has an $N$-minor, and $\{b,x,y\}$ is a triangle-triad in this matroid, so this set is $2$-separating.
  Thus $x$ and $y$ are $N$-contractible in $M \ba a,u$ and hence also in $M \ba a$, by \cref{minor3conn}.
  Hence $y$ and $x$ are $N$-deletable in $M \ba a$, as required.
\end{proof}

\begin{lemma}
  \label{wmatype1}
  \hb\ Suppose $M$ has a pair of elements $\{a,b\}$ such that $M\ba a,b$ is $3$-connected with an $N$-minor, $|E(M)| \ge |E(N)| + 10$, and $M \ba a,b$ is not $N$-fragile, so \cref{bcosw-thm}(ii)(b) holds.
  Assume the element~$a$ blocks in the gadget for $\{a,b\}$.
  Then, either
  \begin{enumerate}
    \item the gadget for $\{a,b\}$ is Type~I, or
    \item $M \ba a,x$ is $3$-connected with an $N$-minor, but not $N$-fragile, and the gadget for $\{a,x\}$ is Type~I.
  \end{enumerate}
\end{lemma}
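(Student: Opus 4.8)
We may assume the gadget for $\{a,b\}$ is Type~II or Type~III, as otherwise conclusion~(1) holds. Since $a$ blocks, $b\in\cl_M(\{x,y\})$, so $\{b,x,y\}$ is a triangle of $M$ (recall $M$ is simple and cosimple), and by \cref{bcosw-thm}(ii)(b) the set $\{a,b,u,x,y\}$ is a cocircuit of $M$, the set $\{u,x,y\}$ is the unique triad of $M\ba a,b$ containing $u$, and $u$ is $N$-flexible in $M\ba a,b$ while $\{z,u,x\}$ is a triangle of $M\ba a,b$. By \cref{switchbxy}, $M\ba a,x$ is $3$-connected and has an $N$-minor, so what remains is to show that $M\ba a,x$ is not $N$-fragile and that the gadget for $\{a,x\}$ is Type~I.

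To prove $M\ba a,x$ is not $N$-fragile I would show $u$ is $N$-flexible in it. First, $u$ is $N$-contractible in $M\ba a,x$: as $u$ is $N$-contractible in $M\ba a,b$ and $\{z,u,x\}$ is a triangle, $x$ is parallel to $z$ in $M\ba a,b/u$, so (using that $N$ is simple) $M\ba a,b/u\ba x$ has an $N$-minor; this matroid equals $M\ba a,x/u\ba b$, a minor of $M\ba a,x/u$. Second, $u$ is $N$-deletable in $M\ba a,x$, and here the key point is that $\{b,x,y\}$ is a triangle-triad of $M\ba a,u$: it is a triangle since $\{b,x,y\}$ is a triangle of $M$ avoiding $\{a,u\}$, and a triad because deleting $a$ and then $u$ from the cocircuit $\{a,b,u,x,y\}$ of $M$ leaves $\{b,x,y\}$ a cocircuit — the alternative, that $\{u,x,y\}$ is a cocircuit of $M\ba a$, is impossible, since by minimality of $\{a,b,u,x,y\}$ it would force $\{u,x,y\}$ to be a triad of $M$, whence $(b,x,y,u)$ would be a $4$-element fan of $M$, contradicting \cref{no4fans}. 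Now $M\ba a,u$ has an $N$-minor (it contains $M\ba a,b,u$) and $\{b,x,y\}$ is exactly $2$-separating in $M\ba a,u$; since a triangle-triad $T$ satisfies $r(\cl(E-T))=r-1$ and dually, at most one of $b,x,y$ lies in the closure of its complement in $M\ba a,u$ and at most one in its coclosure, so by \cref{minor3conn} applied to an $N$-minor of $M\ba a,u$ avoiding $b$, at least two of $b,x,y$ are $N$-contractible in $M\ba a,u$ and at least two are $N$-deletable; hence some $e\in\{b,x,y\}$ is $N$-flexible in $M\ba a,u$. I then transfer this flexibility to $x$: if $e=x$ we are done as $M\ba a,x,u=M\ba a,u\ba x$; otherwise $M\ba a,u/b\ba x\,(=M\ba a,x,u/b)$ has an $N$-minor — directly when $e=b$, and via the series pair $\{b,x\}$ of $M\ba a,u\ba y$ when $e=y$ — using that $x$ is parallel to $y$ in $M\ba a,u/b$ and that $N$ is simple and cosimple, and then, since $\{b,y\}$ is a series pair of $M\ba a,x,u$ and $N$ is cosimple, $M\ba a,x,u$ has an $N$-minor. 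Thus $u$ is $N$-flexible in $M\ba a,x$.

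Deleting $a$ and then $x$ from the cocircuit $\{a,b,u,x,y\}$ of $M$ shows $\{u,b,y\}$ is a triad of $M\ba a,x$, and it is the unique triad of $M\ba a,x$ containing $u$: any other one, via cocircuit exchange with $\{u,b,y\}$, would produce either a second triad of $M\ba a,b$ through $u$ or a $4$-cosegment of $M\ba a,b$, contradicting the uniqueness of $\{u,x,y\}$. Also $x\in\cl_M(\{b,y\})$ and $a\in\cocl_M(\{b,u,x,y\})$. Since $|E(M)|\ge|E(N)|+10$ and $M\ba a,x$ is $3$-connected with an $N$-minor but not $N$-fragile, \cref{bcosw-thm}(ii)(b) applies to $\{a,x\}$ and yields a gadget for $\{a,x\}$, with strong element $u'$, triad $\{u',x',y'\}$, and associated cocircuit $\{x',y',u',a,x\}$ and triangle $\{d',x',y'\}$ of $M$.

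The remaining task — showing this gadget is Type~I — is, I expect, the main obstacle. The plan is to prove that every $N$-flexible element of $M\ba a,x$ lies in $\{u,b,y\}$. Granting that, the gadget cannot be Type~II: a Type~II gadget would carry a maximal fan $(z',u',x',y')$ of $M\ba a,x$ whose triangle $\{z',u',x'\}$ consists of three distinct $N$-flexible elements, forcing $\{z',u',x'\}=\{u,b,y\}$ and hence making the triad $\{u,b,y\}$ a triangle-triad of the $3$-connected matroid $M\ba a,x$, which is absurd; Type~III is excluded similarly, as it would require four distinct $N$-flexible fan elements inside $\{u,b,y\}$. So the gadget is Type~I. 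The hard part is the confinement claim: one must compare the $N$-flexible elements of $M\ba a,x$ with those of $M\ba a,b$, which \cref{bcosw-thm}(ii)(b) confines to $\{u,x,y,z\}$ or $\{u,x,y,z,w\}$, and rule out that any element outside $\{u,b,y\}$ — in particular the fan elements $z$ and $w$, which are $N$-flexible in $M\ba a,b$ — becomes $N$-flexible in $M\ba a,x$. I expect this to hinge on $x$ being an internal element of the fan $(z,u,x,y)$ or $(w,z,x,u,y)$ of $M\ba a,b$, so that deleting $x$ destroys the fan and the triangle/triad relations among $\{b,x,y,u,z,w\}$ in $M$ preclude a new long fan or a confining set in $M\ba a,x$; \cref{oldwin}, \cref{oldwin2}, \cref{no4fans}, and \cref{confiningset}, applied to $\{a,x\}$, should then supply the needed contradictions.
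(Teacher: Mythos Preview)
Your setup is sound through the point where you establish that $M\ba a,x$ is $3$-connected with an $N$-minor and not $N$-fragile. Your argument that $u$ is $N$-flexible in $M\ba a,x$ is correct, though more circuitous than necessary: the paper observes directly that $\{b,x,y\}$ is a triangle-triad in $M\ba a,u$, so $M\ba a,u,x/b$ has an $N$-minor, which simultaneously gives $u$ $N$-deletable and $b$ $N$-contractible in $M\ba a,x$; combined with $M\ba a,b,x/u$ having an $N$-minor (from the series pair), both $u$ and $b$ are $N$-flexible.

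The genuine gap is in your final step. You propose to show that every $N$-flexible element of $M\ba a,x$ lies in $\{u,b,y\}$, but you do not prove this --- you only sketch expectations about fan structure and cite lemmas you hope will apply. This confinement claim is not how the paper proceeds, and it is not clear it can be proved directly: the gadget for $\{a,x\}$ arises from a \emph{new} bolstered basis, unrelated to $B$, and there is no a priori reason the $N$-flexible elements of $M\ba a,x$ should be governed by the fan $(z,u,x,y)$ or $(w,z,x,u,y)$ of $M\ba a,b$.

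The paper's actual argument runs differently. Having shown that both $u$ and $b$ are $N$-flexible in $M\ba a,x$, it assumes the gadget for $\{a,x\}$ is $(x',y',u',z')$ or $(x',y',u',z',w')$ and notes that $\{u,b\}$ must lie inside it. It then performs a case analysis on where $\{u,b\}$ sits among $\{x',y',u',z',w'\}$: first the case that $\{u,b\}$ lies in a triad of the gadget (using orthogonality with the circuit $\{b,y,u,z\}$ and the triangle $\{b,x,y\}$ to force $\{x',y'\}=\{b,y\}$, $u'=u$, and then reach a rank contradiction via a fourth basis element $z'$ in a rank-$3$ set), and then the exceptional Type~III case $\{y',w'\}=\{u,b\}$ (dispatched by showing it produces $N$-flexible elements outside the original gadget for $\{a,b\}$). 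The argument is concrete orthogonality-and-rank bookkeeping, not a global confinement statement; you are missing this entire case analysis, which is the substance of the proof.
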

\begin{proof}
  Suppose the lemma does not hold.
  Then, as $M \ba a,b$ is not $N$-fragile, the gadget for $\{a,b\}$ is Type~II or Type~III, otherwise (i) holds.
  So there is an element~$z$ in the gadget such that $\{x,u,z\}$ is a triangle, and $x$ is $N$-flexible in $M\ba a,b$ by \cref{flexigadgets}.
  Note also that $\{b,y,u,z\}$ is a circuit in $M$.

  By \cref{switchbxy}, $M \ba a,x$ is $3$-connected with an $N$-minor.
  \begin{claim}
    \label{switchnotfragile}
    $M \ba a,x$ is not $N$-fragile.
  \end{claim}
  \begin{subproof}
    First, we claim that $u$ and $b$ are $N$-flexible in $M \ba a,x$.
    Since $x$ is $N$-deletable in $M \ba a,b$, 
    and $u$ is in series in $M \ba a,b,x$, the matroid $M \ba a,b,x/u$ has an $N$-minor.
    In particular, $b$ is $N$-deletable and $u$ is $N$-contractible in $M \ba a,x$.
    Since $M \ba a,u$ has an $N$-minor, and $\{x,b,y\}$ is a triangle-triad in this matroid, $M \ba a,u,x/b$ has an $N$-minor.
    In particular, $b$ is $N$-contractible and $u$ is $N$-deletable in $M \ba a,x$.
    So $b$ and $u$ are $N$-flexible in $M \ba a,x$ as claimed.
    In particular, $M \ba a,x$ is not $N$-fragile.
  \end{subproof}

  Now, as $M \ba a,x$ is not $N$-fragile, the gadget for $\{a,x\}$ is Type~II or Type~III, otherwise (ii) holds.
  Let the gadget for $\{a,x\}$ be $(x',y',u',z')$ or $(x',y',u',z',w')$, if Type~II or Type~III respectively.
  In either case, $\{x',u',z'\}$ is a triangle in $M \ba a,x$.
  As $u$ and $b$ are $N$-flexible in $M \ba a,x$, we have $\{u,b\} \subseteq \{x',y',u',z',w'\}$.

  Suppose that the gadget is Type~II and $z' \in \{u,b\}$.
  Since $\{b,u,y\}$ is a triad of $M \ba a,x$, and $b$ and $u$ are $N$-flexible in this matroid, we have $\{b,u\} \subseteq \{x',u',z'\}$, and $y \notin \{y',x',u',z'\}$.
  Now $\{b,u\}$ intersects the triad $\{x',y',u'\}$ of $M \ba a,x$ in one element, so by orthogonality with the circuit $\{b,y,u,z\}$, we deduce that 
  $z \in \{x',y',u'\}$.
  But if $z \in \{x',u'\}$, then the triangle $\{x',u',z'\}$ is $\{b,u,z\}$, so $\{b,x,y\} \subseteq \cl_M(\{u,z\})$, in which case $\{x,y,u\}$ is a triangle-triad of $M \ba a,b$, a contradiction.
  So $y'=z$.
  Now $\{a,x,x',z,u'\}$ is a cocircuit of $M$, where $\{x',u'\}$ meets $\{b,u\}$.
  If $z' = b$, then $u \in \{x',u'\}$, and by orthogonality with the triangle $\{x,b,y\}$, we have $y \in \{x',u'\}$, a contradiction.
  So $z'=u$ and $b \in \{x',u'\}$.
  We claim that $\{x',y',u',z',x\}=\{x',z,u',u,x\}$ is $3$-separating in $M \ba a$.
  This is clear if $x \in \cl(\{x',y'\})$, so assume that $a \in \cl(\{x',y'\})$.
  Now $\{x',u'\} \subseteq \cl_M(\{b,u\})$, and hence $\{a,x\} \subseteq \cl_M(\{z,b,u\})$.
  So $x$ does not fully block $\{a,x',z,u',u\}$ in $M \ba x$, and hence  $\{x',z,u',u,x\}$ is $3$-separating in $M \ba a$, as claimed.
  But $y \in \cl(\{x',z,u',x,u\}) \cap \cocl_{M \ba a}(\{x',z,u',x,u\})$, implying $\lambda_{M \ba a}(\{x',z,u',x,u,y\}) = 1$ and $|E(M\ba a)| \le 7$, a contradiction.

  Now $z' \notin \{u,b\}$ or the gadget is Type~III.
  If the gadget is Type~II, then $\{u,b\}$ is contained in the triad $T^*=\{x',y',u'\}$ of $M\ba a,x$.
  If the gadget is Type~III, then $\{u,b\}$ is contained in a triad $T^*$ of $M \ba a,x$ such that $T^* \subseteq \{x',y',u',z',w'\}$, except in the case that $\{y',w'\} = \{u,b\}$.
  We return to this exceptional case later; for now we assume that $\{u,b\}$ is contained in a triad~$T^*$ that is contained in the gadget for $\{a,x\}$.
  Let $T^* = \{u,b,t\}$; then $\{u,x,t\}$ is a triad of $M\ba a,b$.
  Thus $t=y$.
  So $y$ is also in the gadget for $\{a,x\}$.
  Now, since $\{x,b,y\}$ is a triangle that intersects $\{a,x\}$ in the element~$x$, and $\{b,y\}$ is contained in the gadget for $\{a,x\}$, we have $\{b,y\} = \{x',y'\}$ and $x \in \cl(\{x',y'\})$.
  Then $u=u'$.
  Thus either $\{b, u, z'\}$ or $\{y, u, z'\}$ is a triangle.
  Now $r_M(\{x,u,y,b,z,z'\}) = 3$.
  Note also that $z' \neq z$, since neither $\{b, u, z\}$ nor $\{y, u, z\}$ is a triangle.
  Moreover, in the case that the gadget for $\{a,b\}$ is Type~III, then, letting $(x,y,u,z,w)$ be the gadget, we have $z' \neq w$, since neither $\{b, u, w\}$ nor $\{y, u, w\}$ is a triangle.
  Since $u$ is $N$-contractible in $M \ba a,b$, by \cref{flexigadgets}, the element~$z'$ is $N$-deletable in $M\ba a,b$.
  Thus, $z' \in B$.
  But $B$ contains $\{x,y,z,z'\}$, four elements of a rank-$3$ set, a contradiction.

  Now we may assume that $\{y',w'\} = \{u,b\}$.
  In this case, $\{a,b,x,y,u\}$ is a cocircuit of $M$ that intersects $\{u',x',z'\}$ in at most one element, $y$; so, by orthogonality, $y \notin \{u',x',z'\}$.
  Suppose $b=y'$.
  Then $M \ba a,b,x$ has $\{u',x'\}$ as a series pair, so $u'$ and $x'$ are $N$-contractible in $M \ba a,b,x$.
  Since $\{u',x',z'\}$ is a triangle, it follows that $u'$, $x'$, and $z'$ are $N$-deletable in $M \ba a,b,x$.
  Since $x'$ is $N$-deletable, $z'$ is also $N$-contractible in $M \ba a,b,x$.
  Now $\{u',x',z'\}$ are $N$-flexible elements of $M \ba a,b$, disjoint from $\{x,y,u\}$, contradicting that there is a gadget for $\{a,b\}$.
  A similar argument applies when $b = w'$.
\end{proof}

The following lemma is used several times in the proof of \cref{thegrandfantasy}.

\begin{lemma}
  \label{easywin}
  \hb\ Suppose $M$ has a pair of elements $\{a,b\}$ such that $M\ba a,b$ is $3$-connected with an $N$-minor, $|E(M)| \ge |E(N)| + 10$, and $M \ba a,b$ is not $N$-fragile, so \cref{bcosw-thm}(ii)(b) holds.
  Assume the element~$a$ blocks in the gadget for $\{a,b\}$.
  Then $\cl_M(\{b,x,y\}) = \{b,x,y\}$.
\end{lemma}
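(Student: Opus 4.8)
The plan is to show that no element of $E(M) - \{b,x,y\}$ can lie in $\cl_M(\{b,x,y\})$. Recall the structure guaranteed by \cref{bcosw-thm}(ii)(b): $M$ has a cocircuit $C^* = \{x,y,u,a,b\}$ and a triangle $\{d,x,y\}$ for some $d \in \{a,b\}$, and $\{x,y,u\}$ is the unique triad of $M \ba a,b$ containing $u$. First I would dispose of the case $d = b$: then $\{b,x,y\}$ is already a triangle of $M$, hence a closed set of $M$ unless it is contained in a larger line; but any element $p \in \cl_M(\{b,x,y\}) - \{b,x,y\}$ would give a circuit inside $\{b,x,y,p\}$ through $p$, and orthogonality of such a circuit with the cocircuit $C^* = \{x,y,u,a,b\}$ forces $p \in \{a,u\}$ (since $p$ together with two of $\{b,x,y\}$ must meet $C^*$ in at least two elements). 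The cases $p = a$ and $p = u$ are then ruled out directly: if $a \in \cl_M(\{b,x,y\})$ then $\{a,b,x,y\}$ has rank $2$, so in particular $\{a,b\} \subseteq \cl_M(\{x,y\})$, and \cref{oldwin} gives $|E(M)| \le |E(N)| + 8$, contradicting $|E(M)| \ge |E(N)| + 10$; if $u \in \cl_M(\{b,x,y\})$ then $\{b,x,y,u\}$ has rank $2$ in $M$, hence $\{x,y,u\}$ spans a line, and since $\{x,y,u\}$ is also a triad of $M \ba a,b$ one checks $\lambda_{M \ba a,b}(\{x,y,u\}) = 1$ with $|E(M \ba a,b)| \ge 4$, contradicting that $M \ba a,b$ is $3$-connected.

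Next I would handle $d = a$, which is the substantive case, so $\{a,x,y\}$ is a triangle of $M$ but we want $\{b,x,y\}$ closed. Suppose for contradiction there is $p \in \cl_M(\{b,x,y\}) - \{b,x,y\}$. Since $\{b,x,y\}$ has rank at most $3$ but need not have rank $2$, I would first argue that it has rank exactly $3$ in $M$ (if it had rank $2$ then $b \in \cl_M(\{x,y\})$, and combined with the triangle $\{a,x,y\}$ we'd again get $\{a,b\} \subseteq \cl_M(\{x,y\})$ and apply \cref{oldwin}). So $\{b,x,y\}$ is an independent set spanning a plane, and $p$ lies on that plane; there is a circuit $C$ with $p \in C \subseteq \{b,x,y,p\}$, and since $\{b,x,y\}$ is independent we must have $C = \{b,x,y,p\}$, a $4$-element circuit. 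Now apply orthogonality between $C = \{b,x,y,p\}$ and the cocircuit $C^* = \{x,y,u,a,b\}$: the intersection already contains $\{b,x,y\}$, which has size $3$, so no constraint is obtained this way. Instead I would use the triangle $\{a,x,y\}$: orthogonality between $\{a,x,y\}$ and any triad/cocircuit of $M$ through $p$ constrains where $p$ can be. The cleaner route is to observe that $\{b,x,y,p\}$ being a circuit and $\{a,x,y\}$ being a triangle implies, by circuit elimination, that there is a circuit contained in $\{a,b,p\} \cup \{x,y\}$ avoiding at least one of $x,y$ — more precisely $\{a,b,x,y,p\}$ contains a circuit through $\{a,p\}$, which together with orthogonality with $C^* = \{x,y,u,a,b\}$ (intersection must have even size $\ge 2$) pins down that circuit. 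Tracking this carefully shows $\{a,b,p\}$ or $\{a,b,x,p\}$ or $\{a,b,y,p\}$ is a circuit, so $p \in \cl_M(\{a,b,x,y\}) $ and in fact $p \in \cl_M(\{a,x,y\}) = \{a,x,y\}$ (as that's a triangle, hence a closed line) forcing $p = a$; but $a \in \cl_M(\{b,x,y\})$ together with $b \in \cl_M(\{a,x,y\})$ (from the triangle) gives $\{a,b\} \subseteq \cl_M(\{x,y\})$ once more, and \cref{oldwin} applies.

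The main obstacle I anticipate is the orthogonality bookkeeping in the $d = a$ case: the cocircuit $C^* = \{x,y,u,a,b\}$ shares three elements with the putative circuit $\{b,x,y,p\}$, so the naive orthogonality bound is vacuous, and one has to combine the triangle $\{a,x,y\}$, the circuit $\{b,x,y,p\}$, and $C^*$ via circuit/cocircuit elimination to corner $p$ into $\cl_M(\{a,x,y\})$. I would also want to double-check the degenerate possibility $p = u$ separately in this case (using that $\{x,y,u\}$ is a triad of $M \ba a,b$ but, crucially, $a,b \notin \cl_M(E(M \ba a,b) - \{x,y,u\})$ need not hold — rather one uses that $u \in \cl_M(\{b,x,y\})$ would make $\{b,x,y,u\}$ rank-$2$ or rank-$3$; if rank $2$ argue as before, if rank $3$ then $u \in \cl_M(\{b,x,y\}) \setminus \cl_M(\{x,y\})$, and combined with the triad $\{x,y,u\}$ of $M \ba a,b$ and \cref{gutsandcoguts} or \cref{oldwin2} with $p$-type element one reaches a contradiction). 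Everything else is routine application of \cref{oldwin}, \cref{oldwin2}, and $3$-connectivity of $M \ba a,b$ against the size hypothesis.
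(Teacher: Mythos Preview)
Your orthogonality argument in the $d=b$ case has a genuine gap. You claim that for $p \in \cl_M(\{b,x,y\}) - \{b,x,y\}$, orthogonality of a circuit $C \subseteq \{b,x,y,p\}$ with the cocircuit $C^* = \{x,y,u,a,b\}$ forces $p \in \{a,u\}$. But since $\{b,x,y\}$ is a triangle, any such circuit~$C$ is a triangle containing $p$ and two elements of $\{b,x,y\}$; those two elements already lie in $C^*$, so $|C \cap C^*| \ge 2$ regardless of $p$. Orthogonality is therefore vacuous here and places no constraint whatsoever on $p$. Your subsequent handling of $p=a$ and $p=u$ is fine, but you have not excluded the possibility that $p$ is an arbitrary element outside $\{a,b,u,x,y\}$, which is exactly the substantive part of the lemma.

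The paper's proof closes this gap using the bolstered basis structure of \cref{bcosw-thm}(ii)(b), which you do not invoke at all. Since $x$ is $N$-contractible in $M\ba a,b$ and $e \in \cl(\{x,y\})$, the element $e$ becomes parallel to $y$ in $M\ba a,b/x$, so $e$ is $N$-deletable in $M\ba a,b$. As $e$ is not in the gadget, $e$ is neither $N$-flexible nor $(N,B)$-robust; combined with $N$-deletability this forces $e \in B$. But then $\{x,y,e\}$ is a rank-$2$ set contained in the basis~$B$, a contradiction. This is the key mechanism you are missing: the restriction on where robust elements can live, not orthogonality, is what rules out a generic $p$.

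A secondary point: the lemma is applied only in contexts where $a$ blocks (so $d=b$ and $\{b,x,y\}$ is already a triangle), and the paper's proof is written under that convention. Your lengthy treatment of the $d=a$ case is therefore unnecessary, and the circuit-elimination manoeuvres you sketch there are not needed.
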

\begin{proof}
  Suppose that $e \in \cl_{M \ba a}(\{b,x,y\}) - \{b,x,y\}$.
  As $M \ba a,b$ is $3$-connected, $e \neq u$, and, if the gadget for $\{a,b\}$ is Type~II or III, $e$ is not in the gadget, in either case.
  Since $M \ba a,b/x$ has an $N$-minor, and $\{e,y\}$ is a parallel pair in this matroid, $e$ is $N$-deletable in $M \ba a,b$.
  Since $e$ is not in the gadget for $\{a,b\}$, the element~$e$ is not $N$-flexible or $(N,B)$-robust, so $e \in B$.  But now $\{x,y,e\}$ is a rank-$2$ set containing three elements of a basis~$B$ of $M$, a contradiction.

  It remains only to show that $a \notin \cl_M(\{b,x,y\})$.
  But if $a \in \cl_M(\{b,x,y\})$, then, by \cref{oldwin2}, $|E(M)| \le |E(N)| + 8$, a contradiction.
\end{proof}

We now come to our main result.

\begin{theorem}
  \label{thegrandfantasy}
  \hb\ Suppose $M$ has a pair of elements $\{a,b\}$ such that $M\ba a,b$ is $3$-connected with an $N$-minor, $M$ has no triads, $|E(M)| \ge |E(N)| + 11$, and $M \ba a,b$ is not $N$-fragile, so \cref{bcosw-thm}(ii)(b) holds.
  If the gadget for $\{a,b\}$ is Type~I, then,
  for every $b' \in B-\{x,y\}$, 
  the element $b'$ is $N$-essential in at least one of $M \ba a,b,u$ and $M \ba a,b/u$.
\end{theorem}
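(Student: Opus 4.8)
The plan is to fix an element $b' \in B - \{x,y\}$ and suppose, for a contradiction, that $b'$ is $N$-deletable or $N$-contractible in \emph{both} $M \ba a,b \ba u$ and $M \ba a,b / u$. I would first recall, from the discussion of Type~I gadgets, that $u$ is $N$-flexible in $M \ba a,b$ and that $\{x,y,u\}$ is the unique triad in $M \ba a,b$ containing $u$, with $\{x,y,u\}$ being $3$-separating in $M \ba a,b$; moreover $\{x,y,u,a,b\}$ is a cocircuit of $M$ and $\{d,x,y\}$ is a triangle of $M$ for some $d \in \{a,b\}$. By \cref{easywin}, $\cl_M(\{b,x,y\}) = \{b,x,y\}$, and since $a$ blocks in the gadget (we may assume this by replacing $\{a,b\}$ with $\{a,x\}$ via \cref{wmatype1} if necessary, noting the conclusion for $b'$ is about $B - \{x,y\}$ and is symmetric under that switch), we have $b \in \cl_M(\{x,y\})$, so the triangle is $\{b,x,y\}$.

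**Exploiting the $3$-separation and deletability of $b'$.** The key structural fact is that $\{x,y,u\}$ is $3$-separating in $M \ba a,b$, so its complement $Y := E(M \ba a,b) - \{x,y,u\}$ is also $3$-separating, and $b' \in Y \cap B$. Since $M \ba a,b / x$ has an $N$-minor and $\{x,y,u\}$ restricted to $M \ba a,b / x$ has $\{y,u\}$ either a parallel or circuit-type pair (because $\{x,y,u\}$ is a triad, contracting $x$ makes... actually I should be careful: $\{x,y,u\}$ is a triad, so in $M \ba a,b / x$ the set $\{y,u\}$ becomes a series pair), I would instead work directly: since $u$ is $N$-flexible, both $M \ba a,b \ba u$ and $M \ba a,b / u$ have $N$-minors; moreover, since $\{x,y,u\}$ is a triad of $M \ba a,b$, in $M \ba a,b \ba u$ the pair $\{x,y\}$ is a series pair, so $x$ and $y$ are $N$-contractible there, while in $M \ba a,b / u$ the elements $x,y$ lie in a parallel... no — in $M\ba a,b/u$, since $\{b,x,y\}$ is a triangle of $M$ and $b$ is deleted, $\{x,y\}$ remains... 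Let me instead use that $x$ and $y$ together form a cocircuit-related pair and deduce that the natural minor in which to seek $N$ is $\si(M \ba a,b\ba u)$ and $\co(M \ba a,b / u)$ respectively. The heart of the argument: if $b'$ were $N$-deletable in one of these and $N$-deletable (or contractible) in the other, then combined with the flexibility of $u$ one produces an $N$-minor of $M \ba a,b \ba b'$ or $M \ba a,b / b'$ that, together with the triangle $\{b,x,y\}$ and the cocircuit $\{x,y,u,a,b\}$, forces $\{x,y\}$ (or $\{x,y,b'\}$) into a low-rank or low-corank configuration contradicting that $B$ is a basis containing $x$, $y$, and $b'$ — precisely the style of contradiction used in \cref{easywin} and \cref{oldwin2}.

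**The main case analysis and obstacle.** I would split into the four cases according to whether $b'$ is deletable or contractible in $M \ba a,b \ba u$ and in $M \ba a,b / u$. The case where $b'$ is $N$-deletable in both is easiest: deleting $b'$ commutes, so $M \ba a,b,b'$ has an $N$-minor, and then since $u$ is in the triad $\{x,y,u\}$ of $M \ba a,b,b'$ (it survives deleting $b'$, as $b' \neq x,y,u$), $u$ is $N$-flexible there too, and one chases the $3$-separation $(\{x,y,u\}, Y - b')$. The harder cases mix a contraction of $b'$ with the contraction of $u$: here I would use \cref{oldwin2} — take $p = b'$; if I can show $b' \in \cl(\{u,x,y\})$ and $\{a,b\} \subseteq \cl_M(\{b',x,y\})$, I get $|E(M)| \le |E(N)|+8$, contradicting the hypothesis. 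The obstacle, and where most of the work lies, is establishing that $b'$ lands in $\cl(\{u,x,y\})$: this should follow from the $3$-separating partition $(\{x,y,u\}, Y)$ together with the fact that contracting $b'$ must preserve an $N$-minor on one side while $u$'s flexibility is used on the other, forcing $b'$ to be a guts element of the induced separation — but making this precise, and ruling out the degenerate possibilities where $b'$ is in series or parallel with gadget elements (using that $M$ has no triads, which kills series pairs in $M$ and hence constrains cocircuits through $b'$), is the delicate part. I expect the "no triads" hypothesis to be used exactly here, to prevent $b'$ from sitting in a triad of $M$ that would otherwise give an alternative, non-contradictory configuration.
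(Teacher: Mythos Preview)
Your proposal has a genuine gap, and the overall strategy does not match what is actually required.

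First, a simplification you missed: since $b' \in B - \{x,y\}$ and the $(N,B)$-robust elements of $M\ba a,b$ are contained in $\{x,y,u\}$, the element $b'$ is not $N$-contractible in $M\ba a,b$. It follows that $b'$ is not $N$-contractible in either $M\ba a,b\ba u$ or $M\ba a,b/u$ (an $N$-minor of $M\ba a,b\ba u/b'$ or $M\ba a,b/u/b'$ is an $N$-minor of $M\ba a,b/b'$). So your four cases collapse to one: if $b'$ is not $N$-essential in both, it is $N$-deletable in both, which says exactly that $u$ is $N$-flexible in $M\ba a,b,b'$. The entire problem is to rule this out.

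Second, and this is the fatal issue, your plan for that single case cannot work. You propose to force $b' \in \cl(\{u,x,y\})$ so as to invoke \cref{oldwin2} with $p=b'$. But $b'$ is an \emph{arbitrary} element of $B-\{x,y\}$, a set of size $r(M)-2$, while $\cl(\{u,x,y\})$ has rank at most~$3$; there is no mechanism that pushes a generic basis element into this closure. The ``guts of the $3$-separation'' heuristic does not apply: $b'$ lies on the large side $Y$ of $(\{x,y,u\},Y)$, not in the guts, and nothing about $u$ being $N$-flexible in $M\ba a,b,b'$ forces $b'$ toward $\{x,y,u\}$.

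The paper's argument is structurally different and much longer. Having reduced to showing $u$ is not $N$-flexible in $M\ba a,b,b'$, it \emph{changes delete pair}: since $M$ is an excluded minor, if $M\ba b,b'$ (or $M\ba a,b'$) is $3$-connected with an $N$-minor and not $N$-fragile, then \cref{bcosw-thm} applies to that pair too, producing a second gadget. The proof then (i) establishes connectivity of $M\ba a,b,b'$, $M\ba a,b'$, $M\ba b,b'$, and $M\ba b'$ (this is where ``$M$ has no triads'' is used, via orthogonality with the triangle $\{b,x,y\}$ and the cocircuit $\{a,b,x,y,u\}$); and (ii) runs a long case analysis comparing the gadget for $\{b,b'\}$ or $\{a,b'\}$ against the original gadget $(\{x,y\},u)$, according to which of $b,b'$ (or $a,b'$) blocks, and which element of the new gadget equals $u$. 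One configuration, called a \emph{mega-gadget}, requires its own dedicated argument. When $M\ba b,b'$ is not $3$-connected, further subcases (a triangle-triad, one series pair, or two series pairs) are handled separately, the latter two using allowable pivots and the bolstered-basis hypothesis in an essential way. None of this is captured by a local rank argument near $\{x,y,u\}$.
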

\begin{proof}
  Since \cref{bcosw-thm}(ii)(b) holds, $M$ has a bolstered basis~$B$ and a $B\times B^{*}$ companion $\mathbb{P}$-matrix $A$ for which $\{x,y,a,b\}$ incriminates $(M,A)$, where $\{x,y\}\subseteq B$ and $\{a,b\}\subseteq B^{*}$; and there is an element $u \in B^*-\{a,b\}$ that is $(N,B)$-strong in $M \ba a,b$.
  Assume that the gadget for $\{a,b\}$ is $(\{x,y\},u)$.
  Up to swapping $a$ and $b$, we may assume that $a$ blocks in the gadget, so $\{b,x,y\}$ is a triangle.

Pick $b' \in B-\{x,y\}$.  Then $b'$ is 
not $N$-contractible in $M \ba a,b$ (it is either $N$-essential or $N$-deletable), since the $(N,B)$-robust elements are contained in $\{x,y,u\}$.

\begin{claim}
  \label{setup}
  Either $b'$ is $N$-essential in one of $M \ba a,b,u$ and $M \ba a,b/u$, or $u$ is $N$-flexible in 
  $M \ba a,b,b'$.
\end{claim}
\begin{subproof}
  Suppose $b'$ is not $N$-essential in both $M \ba a,b,u$ and $M \ba a,b/u$.
  As $b'$ is not $N$-contractible in $M \ba a,b$, we have that $b'$ is $N$-deletable in $M \ba a,b,u$ and $M \ba a,b / u$. Hence $M \ba a,b,b',u$ 
and $M \ba a,b,b' / u$ have $N$-minors.
\end{subproof}

By \cref{setup}, we may assume that $M \ba a,b,b',u$ and $M \ba a,b,b'/u$ have $N$-minors, for otherwise $b'$ is $N$-essential in $M \ba a,b,u$ or $M \ba a,b / u$, as required.
In particular, $b'$ is $N$-deletable in $M \ba a,b$.

Recall that $M$ is $3$-connected, by \cref{no4fans}.
We start with four claims regarding connectivity.

\begin{claim}
  \label{conn1}
  $M \ba a,b,b'$ is $3$-connected up to series classes.
  Moreover, there is at most one series pair in $M \ba a,b,b'$ that does not meet $\{x,y\}$.
\end{claim}
\begin{subproof}
  Suppose $M \ba a,b,b'$ is not $3$-connected up to series classes.
  Then, by the dual of \cref{niceVertSep}, $M \ba a,b$ has a cyclic $3$-separation $(U,b',V)$, where $V \cup b'$ is coclosed, and $|U \cap E(N)| \le 1$.
  Now $U \cap \cocl_{M \ba a,b}(V) = \emptyset$ and, by \cref{gutsandcoguts}, $|(U \cup b') \cap \cl(V)| \le 1$, so $|U \cap \cl(V)| \le 1$.
  Thus, by \cref{minor3conn}, each element in $U$ is $N$-deletable in $M \ba a,b,b'$, and at most one element in $U$ is not $N$-flexible in $M \ba a,b,b'$.
  Since $r^*(U) \ge 3$ and the set of $N$-flexible elements in $M \ba a,b$ is contained in $\{u,x,y\}$, we have $|U \cap \{u,x,y\}| \ge 2$ and $|U-\{u,x,y\}|=1$.
  Let $U' = U\cup \{u,x,y\}$ and $V' = V-U'$, and observe that $(U',b',V')$ is a cyclic $3$-separation, where $U' = \{x,y,u,z'\}$, and $z'$ is the unique element in $U'$ that is not $N$-flexible in $M \ba a,b,b'$.
  But then $z'$ is not $N$-contractible, $z' \in \cl(V')$, and hence, by orthogonality, $b' \in \cocl_{M \ba a,b}(\{x,y,u\})$, a contradiction.
  So $M \ba a,b,b'$ is $3$-connected up to series classes.

  Now suppose that $M \ba a,b,b'$ has distinct series pairs $\{s_1,t_1\}$ and $\{s_2,t_2\}$ that both avoid $\{x,y\}$.
  Then, for $i \in \{1,2\}$, the elements $s_i$ and $t_i$ are $N$-contractible in $M \ba a,b,b'$ and hence in $M\ba a,b$.  Since these elements avoid $\{x,y\}$, we have $s_i,t_i \in B^*$.
  By cocircuit elimination on the triads $\{s_1,t_1,b'\}$ and $\{s_2,t_2,b'\}$ of $M \ba a,b$, there is a cocircuit contained in $\{s_1,t_1,s_2,t_2\}$.  But this set is contained in $B^*$, so is coindependent, a contradiction.
\end{subproof}

\begin{claim}
  \label{conn2}
  $M \ba a,b'$ is $3$-connected up to series classes.
  Moreover, every series pair of $M \ba a,b'$ avoids $\{x,y\}$.
\end{claim}
\begin{subproof}
  Suppose $M \ba a,b'$ has a series pair $S$ meeting $\{x,y\}$.
  Without loss of generality, let $S = \{x,t\}$ for some $t$.
  Since $M$ has no triads, $\{x,t,a,b'\}$ is a cocircuit of $M$.
  Note that $t \neq y$, since otherwise $\{u,x,y,b'\}$ is a cosegment of $M \ba a,b$; and $t \neq b$, since $M \ba a,b$ is $3$-connected.
  But now the cocircuit $\{x,t,a,b'\}$ intersects the triangle $\{x,y,b\}$ of $M$ in a single element, contradicting orthogonality.

  Let $(U,V)$ be a $2$-separation of $M \ba a,b'$ where neither $U$ nor $V$ is a series class.
  Then $|U|,|V| \ge 3$.
  Without loss of generality, let $b \in V$.
  Then $(U,V-b)$ is a $2$-separation of $M \ba a,b,b'$, so, by \cref{conn1}, either $U$ or $V-b$ is a series class of $M \ba a,b,b'$.
  Since the $2$-separating set $U$ in $M \ba a,b,b'$ remains $2$-separating in $M \ba a,b'$, it is not blocked by $b$, so $b \in \cl_{M\ba a,b'}(V-b)$.
  If $U$ is a series class in $M \ba a,b,b'$, it is blocked by $b$ in $M \ba a,b'$, so $b \notin \cl_{M \ba a,b'}(V-b)$, a contradiction.

  So $V-b$ is a series class in $M \ba a,b,b'$ that is blocked by $b$ in $M \ba a,b'$.
  Note that $V$ is a cosegment in $M \ba a,b'$ that contains $b$.
  By orthogonality with the triangle $\{x,y,b\}$, we may assume that $x \in V$.
  If $y \in V$, then $\{x,y,b\}$ is a triad in $M \ba a,b'$, contradicting that $\{u,b,x,y\}$ is a cocircuit.
  It follows, by orthogonality, that
  $V = \{b,x,s\}$ for some $s$, and hence $r(V \cup y) = 2$.
  Now $\{x,s\}$ is a series pair in $M \ba a,b,b'$, so $M \ba a,b,b'/s$ has an $N$-minor.
  Since $\{x,y\}$ is a parallel pair in this matroid, $M \ba a,b,x$ has an $N$-minor.
  But $\{s,b'\}$ is a series pair in this matroid, so $b'$ is $N$-contractible in $M \ba a,b$, a contradiction.
\end{subproof}

\begin{claim}
  \label{conn3}
  Either
  \begin{enumerate}
    \item $M \ba b,b'$ is $3$-connected up to series pairs, and each series pair contains $x$ or $y$ but not both; or
    \item $M \ba b,b'$ has a triangle-triad $\{x,s,a\}$, for $s \in E(M \ba b,b') - \{a,u,x,y\}$, and $M \ba a,b'$ is $3$-connected.
  \end{enumerate}
\end{claim}
\begin{subproof}
  Let $S$ be a series pair of $M \ba b,b'$.
  Since $M$ has no triads, $S \cup \{b,b'\}$ is a cocircuit of $M$.
  By orthogonality, $S$ meets $\{x,y\}$.
  Note also that $\{x,y\}$ is not a series pair of $M \ba b,b'$, otherwise $\{x,y,u,b'\}$ is a contradictory cosegment of $M \ba a,b$.
  Hence every non-trivial series class of $M \ba b,b'$ has size two.

  Let $(U,V)$ be a $2$-separation of $M \ba b,b'$ where neither $U$ nor $V$ is a series class.
  Then $|U|,|V| \ge 3$.
  Without loss of generality, let $a \in V$.
  Then $(U,V-a)$ is a $2$-separation of $M \ba a,b,b'$, so, by \cref{conn1}, either $U$ or $V-a$ is a series class of $M \ba a,b,b'$.
  Since the $2$-separating set $U$ in $M \ba a,b,b'$ remains $2$-separating in $M \ba b,b'$, it is not blocked by $a$, so $a \in \cl_{M\ba b,b'}(V-a)$.
  If $U$ is a series class in $M \ba a,b,b'$, it is blocked by $a$ in $M \ba b,b'$, so $a \notin \cl_{M \ba b,b'}(V-a)$, a contradiction.

  So $V-a$ is a series class in $M \ba a,b,b'$ that is blocked by $a$ in $M \ba b,b'$.
  As $a \in \cl_{M\ba b,b'}(V-a)$, the element $a$ is in a circuit of $M \ba b,b'$ contained in $V$.
  Since $\{x,y,u,a\}$ is a cocircuit of $M \ba b,b'$, orthogonality implies that $V-a$ meets $\{x,y,u\}$.
  Observe that $(V-a) \cup b'$ is a cosegment in $M \ba a,b$.
  As $\{x,y,u\}$ is the unique triad of $M \ba a,b$ containing $u$, it follows that $u \notin V-a$ and $\{x,y\} \nsubseteq V-a$.
  Without loss of generality, $x \in V-a$ and $\{u,y\} \subseteq U$.

  By \cref{conn1}, the series class $V-a$ of $M \ba a,b,b'$ has size at most three, otherwise there is more than one series pair that does not meet $\{x,y\}$.
  Suppose $|V|=4$.
  Let $V=\{a,x,s,s'\}$.  
  Note that $\{x,s,s',b'\}$ is a cosegment in $M \ba a,b$; and $s,s' \notin B$, since these elements are $N$-contractible in $M \ba a,b,b'$ and hence also in $M \ba a,b$.
  Now $\{x,s,s'\}$ and $\{x,y,u\}$ are triads of $M \ba a,b$ that intersect in a single element, and $\{x,s,s',y,u\} \cap B = \{x,y\}$.  This is a confining set relative to $B$ so $|E(M)| \le |E(N)| + 9$ by \cref{confiningset}, a contradiction.

  So we may assume $V = \{a,x,s\}$ for some $s$, where $V$ is a triangle-triad of $M \ba b,b'$.
  Observe that $\{x,s,b'\}$ is a triad of $M \ba a,b$.
  Suppose $M \ba a,b'$ is not $3$-connected.
  Then \cref{conn2} implies that $M \ba a,b'$ has a series pair~$S'$, which avoids $\{x,y\}$.
  Since $M$ has no triads, $S' \cup a$ is a triad in $M \ba b'$.
  By orthogonality with the triangle $V$, we have $s \in S'$.
  Then, by orthogonality with the triangle $\{b,x,y\}$, we have $b \notin S'$.
  Let $S'=\{s,t\}$.
  Then $s$ and $t$ are $N$-contractible in $M \ba a,b,b'$ and hence in $M\ba a,b$.
  So $s,t \in B^*$.
  Now $\{s,t,b'\}$ is a triad in $M \ba a$, and also in $M \ba a,b$, since these matroids are $3$-connected.
  So $\{x,s,t,b'\}$ is a cosegment of $M \ba a,b$.
  In particular, $\{s,t,x\}$ and $\{x,y,u\}$ are triads whose union intersects $B$ in $\{x,y\}$.
  This is a confining set, so $|E(M)| \le |E(N)| + 9$ by \cref{confiningset}, a contradiction.
  So $M \ba a,b'$ is $3$-connected. 
\end{subproof}

\begin{claim}
  \label{conn5}
  $M \ba b'$ is $3$-connected.
\end{claim}
\begin{subproof}
  Suppose not.
  Let $(U,V)$ be a $2$-separation of $M \ba b'$.
  Since $M$ has no triads, neither $U$ nor $V$ is a series class.
  In particular, $|U|,|V| \ge 3$.
  Without loss of generality, let $a \in V$.
  Now $(U,V-a)$ is a $2$-separation of $M \ba a,b'$.
  By \cref{conn2}, either $U$ or $V-a$ is a series class that is blocked by $a$ in $M \ba b'$.
  But $U$ is not blocked by $a$, since it remains $2$-separating in $M \ba b'$.
  So $V-a$ is a series class, and hence, by \cref{conn2}, $\{x,y\} \subseteq U$.
  Note also that $b \in U$, since $b$ is not in a series pair in $M \ba a,b'$, otherwise $M \ba a,b$ is not $3$-connected.
  We now have that $(U-b,V)$ is a $2$-separation in $M \ba b,b'$.
  By \cref{conn3}, since $M \ba a,b'$ is not $3$-connected, $M \ba b,b'$ is $3$-connected up to series classes.
  So $U-b$ is a series class, which contains both $x$ and $y$.
  But then $\{x,y,b'\}$ is a triad in $M \ba a,b$, a contradiction.
\end{subproof}

We require a definition for one particularly annoying case.
Suppose that $M \ba a,b$ and $M \ba b,b'$ are $3$-connected but not $N$-fragile, where the gadget for $\{a,b\}$ is Type~I, and the gadget for $\{b,b'\}$ is either Type~I or Type~II.
Let $x,y,u,u'$ be distinct elements in $M \ba a,b,b'$.
Then, we say that $(x,y,u,u',a,b,b')$ is a \emph{mega-gadget} when
\begin{itemize}
  \item $(\{x,y\},u)$ is the gadget for $\{a,b\}$, where $b \in \cl(\{x,y\})$ and $a$ fully blocks in the gadget; and 
  \item the gadget for $\{b,b'\}$ is $(\{u,y\},u')$, if Type~I; or either $(u,y,u',a)$ or $(y,u,u',a)$, if Type~II; in either case, $b' \in \cl(\{u,y\})$ and $b$ fully blocks in the gadget. 
\end{itemize}

The next claim deals with the case where $M \ba b,b'$ is $3$-connected, except when a mega-gadget arises; we defer this subcase to \cref{megagadgetcase}.

\begin{claim}
  \label{caseanalysis}
  If $M \ba b,b'$ is $3$-connected, then either $u$ is not $N$-flexible in $M \ba a,b,b'$, or there exists an element $u'$ such that $(x,y,u,u',a,b,b')$ is a mega-gadget.
\end{claim}
\begin{subproof}
Suppose $M \ba b,b'$ is $3$-connected, but $u$ is $N$-flexible in $M \ba b,b'$.
Then $M \ba b,b'$ is not $N$-fragile, 
so $u$ is in the gadget for $\{b,b'\}$.
Let the gadget for $\{b,b'\}$ be $(\{x',y'\},u')$, $(x',y',u',z')$, or $(x',y',u',z',w')$, if Type~I, Type~II, or Type~III respectively.
The case where the gadget is Type~I is illustrated in \cref{fig-cases}.
Note that either:
\begin{enumerate}
  \item[\textit{(Case 1)}] $b'$ blocks in the gadget, so $b \in \cl_M(\{x',y'\})$ and $b' \in \cocl_M(\{b,x',y',u'\})$; or
  \item[\textit{(Case 2)}] $b$ blocks in the gadget, so $b' \in \cl_M(\{x',y'\})$ and $b \in \cocl_M(\{b',x',y',u'\})$.
\end{enumerate}

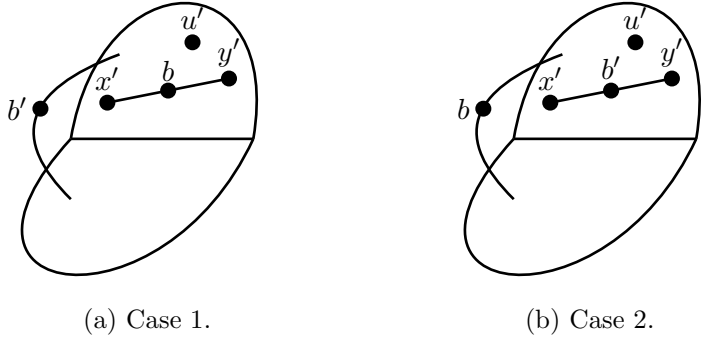
\begin{figure}[htbp]
  \begin{subfigure}{0.45\textwidth}
    \centering
    \begin{tikzpicture}[rotate=90,scale=0.8,line width=1pt]
      \tikzset{VertexStyle/.append style = {minimum height=5,minimum width=5}}
      \clip (-2.5,0.5) rectangle (2.5,-5);
      \draw (0,-1) .. controls (-2.75,1.5) and (-3.25,-2.5) .. (0,-4);
      \draw (0,-1) .. controls (3,-1.5) and (3,-4.5) .. (0,-4);
      \draw (0.6,-1.6) -- (1,-3.6);
      \draw (0,-1) -- (0,-4);
      \draw (1.4,-1.8) .. controls (0.8,-0.2) and (-0,0) .. (-1.0,-1.0);

      \Vertex[x=0.6,y=-1.6,L=$x'$,Lpos=90,LabelOut=True]{x}
      \Vertex[x=1,y=-3.6,L=$y'$,Lpos=90,LabelOut=True]{y}

      \Vertex[x=0.5,y=-0.5,L=$b'$,Lpos=180,LabelOut=True]{a}
      \Vertex[x=0.8,y=-2.6,L=$b$,Lpos=90,LabelOut=True]{b}
      \Vertex[x=1.6,y=-3,L=$u'$,Lpos=90,LabelOut=True]{u}

    \end{tikzpicture}
    \caption{Case 1.}
  \end{subfigure}
  \begin{subfigure}{0.45\textwidth}
    \centering
    \begin{tikzpicture}[rotate=90,scale=0.8,line width=1pt]
      \tikzset{VertexStyle/.append style = {minimum height=5,minimum width=5}}
      \clip (-2.5,0.5) rectangle (2.5,-5);
      \draw (0,-1) .. controls (-2.75,1.5) and (-3.25,-2.5) .. (0,-4);
      \draw (0,-1) .. controls (3,-1.5) and (3,-4.5) .. (0,-4);
      \draw (0.6,-1.6) -- (1,-3.6);
      \draw (0,-1) -- (0,-4);
      \draw (1.4,-1.8) .. controls (0.8,-0.2) and (-0,0) .. (-1.0,-1.0);

      \Vertex[x=0.6,y=-1.6,L=$x'$,Lpos=90,LabelOut=True]{x}
      \Vertex[x=1,y=-3.6,L=$y'$,Lpos=90,LabelOut=True]{y}

      \Vertex[x=0.5,y=-0.5,L=$b$,Lpos=180,LabelOut=True]{a}
      \Vertex[x=0.8,y=-2.6,L=$b'$,Lpos=90,LabelOut=True]{b}
      \Vertex[x=1.6,y=-3,L=$u'$,Lpos=90,LabelOut=True]{u}

    \end{tikzpicture}
    \caption{Case 2.}
  \end{subfigure}
  \caption{The gadget for $\{b,b'\}$.  Note that although here the gadget is illustrated as Type~I, and $b'$ (in case 1) or $b$ (in case 2) is fully blocking, the gadget might be Type~II or Type~III, and $b'$ or $b$ (respectively) might not fully block.}
  \label{fig-cases}
\end{figure}

First we consider case 1: when $b'$ blocks in the gadget and $b \in \cl(\{x',y'\})$.
Observe that when $u \in \{u',x',y'\}$, we have $a \notin \{x',y',u'\}$, otherwise $M \ba a,b$ has a triad containing $u$ distinct from $\{u,x,y\}$, since it contains $b'$.
We consider subcases based on which element is $u$ in the gadget for $\{b,b'\}$.

\textit{(Case 1.1)} Suppose $u=u'$.
In $M \ba b',u$, the set $\{b,x',y'\}$ is a triangle-triad.
Moreover, $\{b,x,y\}$ is a triangle in $M$, and hence in $M \ba b',u$.
By orthogonality, $\{x,y\}$ meets $\{x',y'\}$.
So $r(\{b,x,y,x',y'\}) = 2$.
By \cref{easywin}, $\{x,y\} = \{x',y'\}$.
Now $b' \in \cocl_M(\{x,y,u,b\})$, so $b' \in \cocl_{M \ba a,b}(\{x,y,u\})$, contradicting that $\{x,y,u\}$ is the unique triad containing $u$ in $M \ba a,b$.

\textit{(Case 1.2)} Suppose $u \in \{x',y'\}$.
In $M \ba b'$, the set $\{b,x',y',u'\}$ is a cocircuit, and $\{b,x,y\}$ is a circuit, so by orthogonality we may assume that $x \in \{x',y',u'\}$.
If $\{u,x\}=\{x',y'\}$, then as $\{x',y'\}$ spans $b$, the set $\{u,x\}$ spans $\{b,y\}$, so $r_{M \ba a,b}(\{u,x,y\})=2$, a contradiction.
So we may assume that $x = u'$.
Now $\cl(\{b,x,y,u\}) = \cl(\{b,x',y',u'\})$.
In $M \ba b'$, the set $\{b,x',y',x\}$ is $3$-separating, and has $y$ in its closure, so $\{b,x',y',x,y\}$ is $3$-separating in $M \ba b'$ and in $M \ba a,b'$.
By \cref{conn2,conn5}, it follows that $r^*_{M \ba b'}(\{b,x',y',x,y\}) = r^*_{M \ba a,b'}(\{b,x',y',x,y\}) = 4$, so $a \notin \cocl_{M \ba b'}(\{b,x',y',x,y\})$.
But $a \in \cocl(\{b,x,y,u\})$, where $u \in \{x',y'\}$, a contradiction.

\textit{(Case 1.3)} The gadget for $\{b,b'\}$ is Type~II or Type~III, and $u = z'$. 
Since $\{u,x,y\}$ is a triad of $M \ba a,b$, and $\{u,u',x'\}$ is a triangle, we may assume without loss of generality that $x \in \{u',x'\}$.
In either case, $\cl(\{b,x,y,u\}) = \cl(\{b,x',y',u'\})$.
In $M \ba b'$, the set $\{b,x',y',u'\}$ is $3$-separating, with $x \in \{u',x'\}$.
As $\{y,u\} \subseteq \cl(\{b,x',y',u'\})$, due to the triangles $\{b,x,y\}$ and $\{x',u',u\}$, the set $\{b,x',y',u',y,u\}$ is $3$-separating in $M \ba b'$ and in $M \ba a,b'$.
By \cref{conn2,conn5}, it follows that $r^*_{M \ba b'}(\{b,x',y',u',y,u\}) = r^*_{M \ba a,b'}(\{b,x',y',u',y,u\}) = 5$, so $a \notin \cocl_{M \ba b'}(\{b,x',y',u',y,u\})$.
But $a \in \cocl(\{b,x,y,u\})$, where $x \in \{u',x'\}$, a contradiction.

\textit{(Case 1.4)} The gadget for $\{b,b'\}$ is Type~III, and $u = w'$.
Since $\{b,x,y\}$ is a triangle and $\{b,x',y',u'\}$ is a cocircuit in $M \ba b'$, it follows from orthogonality that $\{x,y\}$ meets $\{x',y',u'\}$.  Without loss of generality, $x \in \{x',y',u'\}$.

If $a \in \{x',z'\}$, then $u$ is in a triad of $M \ba a,b$ with $b'$, contradicting that $\{u,x,y\}$ is the unique triad of $M \ba a,b$ containing $u$.
So $a \notin \{x',z'\}$.
If $x=u'$, then since $\{u',x',z'\}$ is a triangle and $\{x,y,a\}$ is a triad of $M \ba u,b$, we have that $y \in \{x',z'\}$.
But then $\{x,y\}$ spans $\{b,u',x',y'\}$, so $r(\{u',x',y'\})=2$, a contradiction.
So $x \in \{x',y'\}$.

Now $r(\{x,y,x',y',b\})=2$, so $\{x,y\} = \{x',y'\}$ by \cref{easywin}.
Observe that $a \neq u'$, for otherwise $b' \in \cocl_{M \ba u',b}(\{x',y'\}) \subseteq \cocl_{M \ba a,b}(\{x,y,u\})$.
But then the triangle $\{u',x',z'\}$ and the triad $\{x,y,a\}$ of $M \ba b,u$ intersect in a single element, contradicting orthogonality.

\smallskip

Now we consider case 2: where $b$ blocks in the gadget and $b' \in \cl(\{x',y'\})$.
In particular, observe that $\{b',x',y',u'\}$ is a $3$-separating cocircuit in $M \ba b$.

\textit{(Case 2.1)} Suppose $u=u'$. 
Consider $M \ba b,u$, and observe that $\{b',x',y'\}$ is a $2$-separating triangle-triad.
If $a \notin \{x',y'\}$, then this is also a $2$-separating triangle-triad in $M \ba a,b,u$, so,  by \cref{minor3conn}, $b'$ is $N$-contractible in this matroid and in $M \ba a,b$, a contradiction.
So suppose $a \in \{x',y'\}$.
Then $b'$ is in a series pair in $M \ba a,b,u$, so $b'$ is $N$-contractible in this matroid and in $M \ba a,b$, a contradiction.

\textit{(Case 2.2)} Suppose $u \in \{x',y'\}$. First we let $u = x'$. 
If $a = y'$, then $M \ba a,b$ has a triad $\{u,b',u'\}$ that is not $\{u,x,y\}$, a contradiction.
Similarly, $a \neq u'$.
So $a \notin \{u,b',y',u'\}$.
Now $\{u,x,y\}$ is a triad and $\{u,b',y'\}$ is a triangle in $M \ba a,b$, so by orthogonality $y' \in \{x,y\}$.
Without loss of generality, $y' = y$.

Suppose $b$ does not fully block in the gadget for $\{b,b'\}$.
Then $\{b,b',u,y,u'\}$ is a rank-$3$ cocircuit in $M$.
In $M \ba a$, the set $\{x,b,u,y\}$ is $3$-separating, and $b'$ is in the closure of this set, due to the triangle $\{u,y,b'\}$, so $\lambda_{M \ba a}(\{x,b,u,y,b'\}) \le 2$.
Now $u' \in \cocl_{M \ba a}(\{b,b',u,y\})$ and $u' \in \cl(\{b,b',u,y\})$, so $\lambda_{M \ba a}(\{x,b,u,y,b',u'\}) \le 1$.
Since $M\ba a$ is $3$-connected, this implies that $|E(M\ba a)| \le 7$, a contradiction.
So we may assume that $b$ fully blocks in the gadget for $\{b,b'\}$.

Next we argue that $a$ fully blocks in the gadget for $\{a,b\}$.
Suppose not.
Then $a \in \cl(\{u,x,y\})$ and $\{a,b,u,x,y\}$ is a $3$-separating cocircuit in $M$.
Since $M \ba b,b',u'$ has an $N$-minor, and $\{x',y'\}$ is a series pair in this matroid, $M \ba b,u'/x'$ has an $N$-minor.
Similarly, as $\{y',b'\}$ is a parallel pair in the latter matroid, $M \ba b,u',y'$ has an $N$-minor.
Finally, as $\{x',b'\}$ is a series pair in this matroid, $M \ba b,u'/b'$ has an $N$-minor, $N_1$ say.
Let $Z_1=E(M)-\{a,b,u,x,y,b',u'\}$.
As $\{a,b,u,x,y,b'\}$ is $3$-separating in $M$, with $u'$ in the coguts, it follows that $(\{a,u,x,y\}, Z_1)$ is a $2$-separation in $M \ba b,u'/b'$.
By \cref{minor3conn}, either $|\{a,u,x,y\} \cap E(N_1)| \le 1$ or $|Z_1 \cap E(N_1)| \le 1$.
First, assume that $|\{a,u,x,y\} \cap E(N_1)| \le 1$.
Since $a \in \cl(\{u,x,y\})$, the element $a$ is $N$-deletable in $M \ba b,u'/b'$, by \cref{minor3conn}.
So $b'$ is $N$-contractible in $M \ba a,b$, a contradiction.

Now assume $|Z_1 \cap E(N_1)| \le 1$.
We claim that at most one element in $Z_1$ is not $N$-flexible in $M \ba b,u'/b'$.
We may assume that the $2$-separation $(\{a,u,x,y\},Z_1)$ is exact, for otherwise every element in $Z_1$ is $N$-flexible.
Suppose that there exists some $z_1 \in Z_1 \cap \cl_{M \ba b,u'/b'}(\{a,u,x,y\})$.
Note that $b' \notin \cl_M(\{a,x,z_1\})$, due to the cocircuit $\{b,b',u,y,u'\}$.
Then, since $r_{M \ba b,u'/b'}(\{a,u,x,y\})=2$, we have $z_1 \in \cl_M(\{a,x\})$.
So $\{a,x,z_1\}$ is a triangle in $M$.
It follows that $z_1$ is the unique element in $Z_1 \cap \cl_{M \ba b,u'/b'}(\{a,u,x,y\})$, by orthogonality with the cocircuit $\{a,b,u,x,y\}$.
So each element in $Z_1-z_1$ is $N$-contractible in $M \ba b,u'/b'$, by \cref{minor3conn}.
Note that $|E(N_1)| \le 4$, since $|Z_1 \cap E(N_1)| \le 1$ and $\{u,y\}$ is a parallel pair in $M \ba b,u'/b'$.
Since $N_1$ is non-binary, $N_1 \cong U_{2,4}$.
But $(M\ba b,u'/b')|\{a,u,x,z_1\} \cong U_{2,4}$, so each element in $Z-z_1$ is $N$-deletable, and hence $N$-flexible, in $M\ba b,u'/b'$, which satisfies the claim.
So we may assume that $Z_1 \cap \cl_{M \ba b,u'/b'}(\{a,u,x,y\}) = \emptyset$.
Next suppose that there are distinct elements $z_1,z_1' \in Z_1 \cap \cocl_{M \ba b,u'/b'}(\{a,u,x,y\})$.
Then $\{z_1,z_1'\}$ is a series pair in $M \ba b,u'$, so $\{z_1,z_1',u'\}$ is a triad in $M \ba b$ and $M \ba b,b'$.
But $\{u',u,y\}$ is the unique triad containing $u'$ in $M \ba b,b'$.
We deduce that $|Z_1 \cap \cocl_{M \ba b,u'/b'}(\{a,u,x,y\})| \le 1$.
By \cref{minor3conn}, at most one element in $Z_1$ is not $N$-flexible in $M \ba b,u'/b'$, as claimed.
Now, $M \ba b,u'$ has at least six $N$-flexible elements, since $|Z_1|-1 \ge 6$, contradicting that $M \ba b,b',u'$ has at most four $N$-flexible elements.
We deduce that $a$ fully blocks in the gadget for $\{a,b\}$.

Suppose that $x=u'$.
Observe that $\{x,b,y,u\}$ is $3$-separating in $M \ba a$, and $b'$ is in the closure of this set, due to the triangle $\{u,y,b'\}$.
If $x=u'$, then $b' \in \cocl_{M \ba a}(\{x,b,y,u\})$, so $\lambda_{M \ba a}(\{x,b,y,u,b'\}) \le 1$.
Since $M$ is $3$-connected, this implies $|E(M\ba a)| \le 6$, a contradiction.
So $x \neq u'$.

Now, if the gadget for $\{b,b'\}$ is Type~I, then $(x,y,u,u',a,b,b')$ is a mega-gadget.
Suppose the gadget for $\{b,b'\}$ is not Type~I.
In order to show that $(x,y,u,u',a,b,b')$ is a mega-gadget, it remains to show that $z'=a$ and the gadget for $\{b,b'\}$ is Type~II.
Since $\{a,b,x,y,u\}$ is a cocircuit and $\{u,u',z'\}$ is a triangle, $\{a,x\}$ meets $\{u',z'\}$ by orthogonality.
But if $x \in \{u',z'\}$, then $\cl(\{x,y\}) \subseteq \cl(\{y',u',z'\})$, so $b \in \cl(\{x',y',u'\})$, contradicting that $b$ fully blocks in the gadget for $\{b,b'\}$.
So $a \in \{u',z'\}$.
If $a = u'$, then $\{x',b',y'\} = \{u,b',y\}$ is a triangle-triad of $M \ba a,b$, a contradiction.
So $a=z'$.

Now suppose the gadget for $\{b,b'\}$ is Type~III.
Then $M \ba b,b',w'$ has an $N$-minor.  As $\{x',z'\} = \{u,a\}$ is a series pair in this matroid, $M \ba b,b',w'/u$ has an $N$-minor.
Now $\{a,u'\}$ is a parallel pair in this matroid, so $M \ba a,b,b',w'/u$ has an $N$-minor, $N_2$ say.
Let $Z_2 = E(M) - \{a,b,u,x,y,u',b',w'\}$.
Note that $\lambda_{M \ba a,b}(\{u,y,u',b'\})=2$, with $\{x,w'\} \subseteq \cocl_{M \ba a,b}(\{u,y,u',b'\})$, so $(\{u,y,u',b'\},x,Z_2)$ is a path of $2$-separations in $M \ba a,b,w'$.
As $|E(N_2) \cap \{u,y,u',b'\}| \le 2$ but $|E(N_2)| \ge 4$, we have $|E(N_2) \cap \{u,y,u',b'\}| \le 1$, by \cref{minor3conn}, so $|E(N_2) \cap (Z_2 \cup x)| \ge 3$.  By another application of \cref{minor3conn} for the $2$-separation $(\{u,y,u',b',x\},Z_2)$, we have $|E(N_2) \cap \{u,y,u',b',x\}| \le 1$.
Thus $M \ba a,b,w'/b'$ has an $N$-minor, by yet another application of \cref{minor3conn} and since $b' \in \cocl_{M \ba a,b,w'}(\{u,y,u'\})$.
In particular, $b'$ is $N$-contractible in $M \ba a,b$, a contradiction.
Thus the gadget for $\{b,b'\}$ is the Type~II gadget $(u,y,u',a)$.
So $(x,y,u,u',a,b,b')$ is a mega-gadget.

The argument when $u=y'$ is essentially the same, with the roles of $x'$ and $y'$ swapped.  Note that in the subcase where the gadget for $\{b,b'\}$ is not Type~I, a similar argument shows that the gadget for $\{b,b'\}$ is the Type~II gadget $(y,u,u',a)$, so again $(x,y,u,u',a,b,b')$ is a mega-gadget.

\textit{(Case 2.3)} Suppose the gadget for $\{b,b'\}$ is Type~II or Type~III, and $u = z'$. 
Since $M \ba a,b,b' / u$ has an $N$-minor, and $\{u',x'\}$ is a parallel pair in this matroid, $M \ba a,b,b',u'/u$ has an $N$-minor, $N_1$ say.
The matroid $M \ba b / u$ has $\{b',x',y',u'\}$ as a rank-$2$ cocircuit, where $\{u',x'\}$ is a parallel pair.
So $\{b',x',y',u'\}$ is a $2$-separation in $M \ba b/u$. 
If $a \notin \{b',x',y',u'\}$, then $\{b',x',y',u'\}$ is also a $2$-separation in $M \ba a,b/u$.
As $|\{b',x',y',u'\} \cap E(N_1)| \le 2$ and $|E(N_1)| \ge 4$, \cref{minor3conn} implies that $|\{b',x',y',u'\} \cap E(N_1)| \le 1$ and $b'$ is $N$-flexible in $M \ba a,b/u$.
In particular, $b'$ is $N$-contractible in $M \ba a,b$, a contradiction.
So we may assume that $a \in \{x',y',u'\}$.
Now, in a similar fashion, $\{b',x',y',u'\}-a$ is a $2$-separation in $M \ba a,b/u$, and, by \cref{minor3conn}, $|\{b',x',y',u'\}-a \cap E(N_1)| \le 1$ and $b'$ is $N$-flexible in $M \ba a,b/u$.
In particular, $b'$ is $N$-contractible in $M \ba a,b$, a contradiction.

\textit{(Case 2.4)} Suppose the gadget for $\{b,b'\}$ is Type~III, and $u = w'$. 
Since $M \ba a,b,b',u$ has an $N$-minor, and $\{x',z'\}$ is a series pair in this matroid, $M \ba a,b,b',u/x'$ has an $N$-minor.
As $\{u',z'\}$ is a parallel pair in the latter matroid, $M \ba \{a,b,b',u,z'\}/x'$ has an $N$-minor, $N_3$ say.
Let $Z_3 = E(M \ba b,u) - \{x',y',u',z',b'\}$.
The matroid $M \ba a,b,u$ has a $2$-separation that is either $(\{x',y',u',z',b'\}-a,Z_3)$, if $a \in \{x',y',u',z'\}$; or $(\{x',y',u',z',b'\},Z_3-a)$, if $a \in Z_3$.
Note that $|(\{x',y',u',z',b'\}-a) \cap E(N_3)| \le 2$, so, as $|E(N_3)| \ge 4$, \cref{minor3conn} implies that $|(\{x',y',u',z',b'\}-a) \cap E(N_3)| \le 1$.
Now $b' \in \cocl_{M \ba a,b,u}(\{u',x',y'\}-a)$, so $b' \notin \cl_{M \ba a,b,u}(Z_3-a)$, implying $b'$ is $N$-contractible in $M \ba a,b \ba u$ and hence in $M \ba a,b$, a contradiction.

From this case analysis, we deduce that either $u$ is not $N$-flexible in $M \ba a,b,b'$, or there is a mega-gadget, as required.
\end{subproof}

\begin{claim}
  \label{megagadgetcase}
  If $M \ba b,b'$ is $3$-connected and $(x,y,u,u',a,b,b')$ is a mega-gadget, then $u$ is not $N$-flexible in $M \ba a,b'$.
\end{claim}
\begin{subproof}
  Assume that $M \ba b,b'$ is $3$-connected and $(x,y,u,u',a,b,b')$ is a mega-gadget.
  By the definition of a mega-gadget, the gadget for $\{b,b'\}$ is either $(\{u,y\},u')$ if Type~I, or $(u,y,u',a)$ or $(y,u,u',a)$ if Type~II.
  In any case, $b$ fully blocks and $\{b',u,y\}$ is a triangle.
  Note that $\{b,b',u,y,u'\}$ is a cocircuit of $M$, so $u' \in \cocl(\{b,b',u,y\})$.
  Let $Z = E(M\ba a)-\{x,b,y,u,u',b'\}$.

  We claim that $M \ba a,b'$ is $3$-connected.
  By \cref{conn2}, $M \ba a,b'$ is $3$-connected up to series classes and if $b'$ is in a triad of $M \ba a$, then this triad avoids $\{x,y\}$.
  Suppose $b'$ is in a triad~$T^*$ of $M \ba a$.
  Since $M \ba a,b$ is $3$-connected, $b \notin T^*$ and $T^*$ is a triad of $M \ba a,b$.
  By orthogonality with the triangle $\{b',u,y\}$, and the foregoing, $u \in T^*$.
  But then $T^*$ is a triad of $M \ba a,b$ containing $u$ and distinct from $\{u,x,y\}$, a contradiction.
  So $M \ba a,b'$ is $3$-connected.

  Recall that $b'$ is $N$-deletable in $M \ba a,b$, so $M \ba a,b'$ has an $N$-minor.
  If $M \ba a,b'$ is $N$-fragile, then $u$ is not $N$-flexible in $M \ba a,b'$, as required.
  So we may assume that $M \ba a,b'$ is not $N$-fragile.
  Then there is a gadget for $\{a,b'\}$.
  Let this gadget be $(\{x'',y''\},u'')$, $(x'',y'',u'',z'')$, or $(x'',y'',u'',z'',w'')$ if Type~I, Type~II, or Type~III respectively.

  There are two cases to consider:
  \begin{enumerate}[label=\rm(\Roman*)]
    \item $a$ blocks in the gadget for $\{a,b'\}$, and $b' \in \cl(\{x'',y''\})$; or
    \item $b'$ blocks in the gadget for $\{a,b'\}$, and $a \in \cl(\{x'',y''\})$.
  \end{enumerate}

  First we consider case~(I).
  We consider two subcases depending on whether or not $\{x'',y''\} = \{u,y\}$.

  Suppose $\{x'',y''\} = \{u,y\}$.
  Assume that the gadget for $\{a,b'\}$ is Type~II or Type~III, and $(x'',y'') = (y,u)$.
  Then, by orthogonality between the triad $\{x'',y'',u''\}$ of $M \ba a,b'$, and the triangle $\{x,b,y\}$, we have $u'' \in \{x,b\}$.  But then the triad $\{x'',y'',u''\}$ of $M \ba a,b'$ is properly contained in the $4$-element cocircuit $\{x,b,y,u\}$, a contradiction.
  We deduce that either the gadget for $\{a,b'\}$ is Type~I, or $(x'',y'') = (u,y)$.
  So we may assume (in either case) that $y=y''$ and $u=x''$.
  By orthogonality between the triangle $\{b,x,y\}$ and the triad $\{u'',u,y\}$ of $M \ba a,b'$, we have $u'' \in \{b,x\}$.
  The matroid $M \ba a,u''$ is not $3$-connected, since it has a triangle-triad $\{x'',y'',b'\}$.
  Since $M \ba a,b$ is $3$-connected, but $M \ba a,u''$ is not, $u'' = x$.
  Now $\{x'',b',y'',u''\} = \{u,b',y,x\}$ is a cocircuit in $M \ba a$.
  Since $\{b,u,x,y\}$ is also a cocircuit in $M \ba a$, the set $\{b,b',u,x\}$ contains a cocircuit, by cocircuit elimination.
  Since $M \ba a,b$ is $3$-connected, this cocircuit is either $\{b',u,x\}$ or $\{b,b',u,x\}$.
  But then $\{b',u,x\}$ is a triad of $M \ba a,b$ that contains $u$ and is distinct from $\{u,x,y\}$, a contradiction.

  Now $\{x'',y''\} \neq \{u,y\}$.
  By \cref{easywin} relative to the gadget for $\{b,b'\}$, we have $\cl(\{b',u,y\}) = \{b',u,y\}$, so $\{x'',y''\} \cap \{u,y\} = \emptyset$.
  By orthogonality between the triangle $\{b',x'',y''\}$ and the cocircuit $\{b,b',u',u,y\}$, we have $u' \in \{x'',y''\}$ or $b \in \{x'',y''\}$.
  But in the latter case $\{x'',y''\} = \{b,x\}$, by orthogonality between the triangle $\{x'',b',y''\}$ and the cocircuit $\{b,u,x,y\}$ of $M \ba a$, in which case $\{b,x,y\}$ is not closed, contradicting \cref{easywin}.
  So $u' \in \{x'',y''\}$.

  Now $\{b',x'',y''\} = \{b',u',q\}$ for some element $q$ with $q \notin \{u,y\}$.
  We have $\lambda_{M \ba a}(\{x,b,y,u,b',u'\}) = 2$ and $q \in \cl(\{x,b,y,u,b',u'\})$.
  Now $(\{x,b,y,u,b',u'\},q,Z-q)$ is a path of $3$-separations in $M \ba a$, with $q$ in the guts, since $|E(M \ba a)| \ge 9$.
  As $\{b',x'',y'',u''\}=\{b',u',q,u''\}$ is a cocircuit of $M \ba a$, we have $u'' \in Z-q$, for otherwise $q \in \cocl_{M \ba a}(\{x,b,y,u,b',u'\})$.
  But orthogonality with the triangle $\{b',u,y\}$ implies that $u'' \in \{u,y\}$, a contradiction.

  Now consider case~(II).
  Recall that $Z = E(M\ba a)-\{x,b,y,u,u',b'\}$.
  If $\{x'',y''\} \subseteq Z \cup \{u',b'\}$, then $a \in \cl(Z \cup \{u',b'\})$, due to the triangle $\{x'',a,y''\}$.
  But then $a \notin \cocl(\{x,b,y,u\})$, a contradiction.
  So $\{x'',y''\}$ meets $\{x,b,y,u\}$.
  Moreover, as $a$ fully blocks the gadget $(\{x,y\},u)$ for $\{a,b\}$, $|\{x'',y''\} \cap \{x,b,y,u\}| = 1$.
  Note also that if $u'' = b$, then $\{x'',a,y''\}$ is a triangle-triad in $M \ba b,b'$, contradicting that $M \ba b,b'$ is $3$-connected.
  So $u'' \neq b$.

  First, suppose that $x \in \{x'',y''\}$.
  In what follows, we assume that $x=x''$; if $x=y''$, then essentially the same argument applies but with the roles of $x''$ and $y''$ swapped (in particular, note that the argument makes no assumption about whether the gadget for $\{a,b'\}$ is Type~I, Type~II, or Type~III).
  By orthogonality between the cocircuit $\{a,b',x'',y'',u''\}$ and the triangles $\{x'',b,y\}$ and $\{b',u,y\}$, the set $\{y'',u''\}$ meets both $\{b,y\}$ and $\{u,y\}$.
  So either $y \in \{y'',u''\}$, or $\{y'',u''\} = \{b,u\}$.
  But in the latter case $b=y''$ and $u=u''$, since $u'' \neq b$.
  Then, as $\{a,b',x'',y'',u''\}$ is a cocircuit in $M$, the set $\{b',x'',u\}$ is a triad in $M \ba a,b$ that is distinct from $\{u,x,y\}$, a contradiction.
  So $y \in \{y'',u''\}$.
  If $y=y''$, then the triangle $\{x,a,y\}$ intersects the cocircuit $\{b,b',u,y,u'\}$ in a single element, contradicting orthogonality.
  So $y = u''$. 

  Due to the cocircuit $\{a,b',u'',x'',y''\}$, we have $y'' \in \cocl_{M \ba a}(\{b',u'',x''\}) \subseteq \cocl_{M \ba a}(\{x,b,y,u,b'\})$.
  As $y'' \in \cl(\{a,x\}) \subseteq \cl(\{x,b,y,u,b',a\})$, and $\lambda_{M \ba a}(\{x,b,y,u,b'\})=2$, it follows that $\lambda_M(\{x,b,y,u,b',a,y''\})=2$.

  Let $Z' = Z-y''$.
  As $u' \in \cocl(\{b,y,u,b'\})$, we have that $(\{x,b,y,u,b',a,y''\},u',Z')$ is a path of $3$-separations in $M$.
  Hence $(\{x,y,u,a,y''\},Z')$ is a $2$-separation in $M \ba b,b',u'$.
  Moreover, $u'$ is $N$-deletable in $M \ba b,b'$. 
  By \cref{minor3conn}, the $N$-minor of $M \ba b,b',u'$, say $N_1$, has either $|\{x,y,u,a,y''\} \cap E(N_1)| \le 1$ or $|Z' \cap E(N_1)| \le 1$.

  Suppose $|\{x,y,u,a,y''\} \cap E(N_1)| \le 1$.
  Then $(\{x,y,u,b',a,y''\},Z')$ is a $2$-separation in $M \ba b,u'$ with $|\{x,y,u,b',a,y''\} \cap E(N_1)| \le 1$, by \cref{minor3conn} and since $|E(N)| \ge 4$.
  Since $u \in \cl(\{y,b'\})$, we have $u \notin \cocl_{M \ba b,u'}(Z')$, implying $M \ba b,u',u$ has an $N$-minor, by \cref{minor3conn}.
  As $\{b',y\}$ is a series pair in this matroid, $M \ba b,u'/b'$ has an $N$-minor, $N_2$ say.
  Now $(\{x,y,u,a,y''\}, Z')$ is a $2$-separation in $M \ba b,u'/b'$, with $|\{x,y,u,a,y''\} \cap E(N_2)| \le 1$.
  As $a \in \cl(\{x,y''\})$, we have $a \notin \cocl_{M \ba b,u'/b'}(Z')$, so $a$ is $N$-deletable in $M \ba b,u'/b'$ by another application of \cref{minor3conn}.
  But then $b'$ is $N$-contractible in $M \ba a,b$, a contradiction.

  Now suppose $|Z' \cap E(N_1)| \le 1$. 
  By \cref{gutsandcoguts}, $|Z' \cap \cl_{M \ba b,b',u'}(\{x,y,u,a,y''\})| = |Z' \cap \cl_M(\{x,b,y,u,b',a,y''\})| \le 1$, so at most one element of $Z'$ is not $N$-contractible in $M \ba b,b',u'$, by \cref{minor3conn}.
  Suppose there are distinct elements $q,q' \in Z' \cap \cocl_{M \ba b,b',u'}(\{x,y,u,a,y''\})$.
  Then $\{q,q',u'\}$ is in the coguts of the $3$-separation $(E(M)-Z',Z')$, so $\{q,q',u'\}$ is a triad of $M$, a contradiction.
  So $|Z' \cap \cocl_{M \ba b,b',u'}(\{x,y,u,a,y''\})| \le 1$, and at most one element of $Z'$ is not $N$-deletable in $M \ba b,b',u'$, by \cref{minor3conn}.
  Thus, at most two elements of $Z'$ are not $N$-flexible in $M \ba b,b',u'$, so $Z'$ contains at least four $N$-flexible elements of $M \ba b,b',u'$, since $|Z'| = |E(M)| - 8 \ge 6$.
  But this contradicts that $M \ba b,b',u'$ has at most three $N$-flexible elements (since $u'$ is in the gadget for $M \ba b,b'$, which is Type~I or Type~II).

  Now we may assume that $x \notin \{x'',y''\}$, so $\{x'',y''\}$ meets $\{b,u,y\}$.
  Then $\{x'',y''\} \subseteq \{b,u,y,u'\}$, by orthogonality between the triangle $\{x'',a,y''\}$ and the cocircuit $\{b,b',u,y,u'\}$.
  Moreover, recalling that $|\{x'',y''\} \cap \{x,b,y,u\}| = 1$,
  we have $u' \in \{x'',y''\}$.
  We first assume that $x'' \in \{b,u,y\}$ and $y'' = u'$.
  In making this assumption, we lose no generality when the gadget for $\{a,b'\}$ is Type~I; we later return to the case that $y'' \in \{b,u,y\}$ and $x'' = u'$ and the gadget for $\{a,b'\}$ is Type~II or Type~III.

  Firstly, suppose that $x'' = u$.
  Then the cocircuit $\{a,b',u,u',u''\}$ intersects the triangle $\{b,x,y\}$ in at most one element, $u''$.
  By orthogonality, $u'' \notin \{b,x,y\}$.
  Now $\lambda_{M \ba a}(\{b,u,x,y,b'\})=2$, with $u' \in \cl(\{u,a\})$ and $u' \in \cocl(\{b,b',u,y\})$, due to the triangle $\{x'',a,y''\}$ and cocircuit $\{b,b',u',x',y'\}$ respectively.
  It follows that $\lambda_M(\{a,b,u,x,y,u',b'\}) = 2$.
  Moreover, due to the cocircuit $\{a,b',u,u',u''\}$, we have $u'' \in \cocl(\{a,b,u,x,y,u',b'\})$.
  Let $Z'' = Z - u''$.
  Hence $(\{b,u,x,y,u'\},Z'')$ is a $2$-separation in $M \ba a,b',u''$.
  This matroid has an $N$-minor, $N_3$ say.
  So, by \cref{minor3conn}, either $|\{b,u,x,y,u'\} \cap E(N_3)| \le 1$ or $|Z'' \cap E(N_3)| \le 1$.

  Assume $|\{b,u,x,y,u'\} \cap E(N_3)| \le 1$.
  Then $(\{b,u,x,y,u',b'\},Z'')$ is a $2$-separation in $M \ba a,u''$ with $|\{b,u,x,y,u',b'\} \cap E(N_3)| \le 1$, by \cref{minor3conn} and since $|E(N)| \ge 4$.
  Now $b \in \cl(\{x,y\})$, so $b \notin \cocl_{M \ba a,u''}(Z'')$, implying that $M \ba a,b,u''$ has an $N$-minor, say $N_4$, by \cref{minor3conn}.
  Furthermore, $(\{u,x,y,u',b'\},Z'')$ is a $2$-separation in this matroid, with $|\{u,x,y,u',b'\} \cap E(N_4)| \le 1$ and $b' \in \cocl_{M \ba a,b,u''}(\{u',u,y\})$, so $b' \notin \cl_{M \ba a,b,u''}(Z'')$, implying $M \ba a,b,u''/b'$ has an $N$-minor, by \cref{minor3conn}.
  So $b'$ is $N$-contractible in $M \ba a,b$, a contradiction.

  Now assume $|Z'' \cap E(N_3)| \le 1$.
  Recall that $\lambda_M(\{a,b,u,x,y,u',b'\}) = 2$ and $u'' \in \cocl(\{a,b,u,x,y,u',b'\})$.
  By \cref{gutsandcoguts}, $|Z \cap \cl_M(\{a,b,u,x,y,u',b'\})| \le 1$.
  But $|Z'' \cap \cl_{M \ba a,b',u''}(\{b,u,x,y,u'\})| \le |Z \cap \cl_M(\{a,b,u,x,y,u',b'\})|$, so at most one element of $Z''$ is not $N$-contractible in $M \ba a,b',u''$, by \cref{minor3conn}.
  Suppose there are distinct elements $q,q' \in Z'' \cap \cocl_{M \ba a,b',u''}(\{b,u,x,y,u'\})$.
  Then $\{q,q',u''\}$ is in the coguts of the $3$-separation $(E(M)-Z'',Z'')$, so $\{q,q',u''\}$ is a triad of $M$, a contradiction.
  So $|Z'' \cap \cocl_{M \ba a,b',u''}(\{b,u,x,y,u'\})| \le 1$, and at most one element of $Z''$ is not $N$-deletable in $M \ba a,b',u''$, by \cref{minor3conn}.
  Thus, at most two elements of $Z''$ are not $N$-flexible in $M \ba a,b',u''$, so $Z''$ contains at least five $N$-flexible elements of $M \ba a,b',u''$, since $|Z''| = |E(M)| - 8 \ge 7$.
  But this contradicts that $M \ba a,b',u''$ has at most four $N$-flexible elements, since $u''$ is in the gadget for $M \ba a,b'$.

  Secondly, suppose that $x'' = b$. 
  Then, by orthogonality between the cocircuit $\{a,b',b,u',u''\}$ and the triangles $\{x,b,y\}$ and $\{u,b',y\}$, we have $u''=y$.
  We may assume that $u$ is $N$-flexible in $M \ba a,b'$, for otherwise \cref{megagadgetcase} holds.
  So $u$ is in the gadget for $\{a,b'\}$.
  But $u \notin \{x'',y'',u''\}$, as $\{x'',y'',u''\}=\{b,u',y\}$, so the gadget for $\{a,b'\}$ is Type~II or Type~III.
  By \cref{easywin}, $\cl(\{b,x,y\}) = \cl(\{x'',u''\}) = \{x'',u'',x\}$.
  So the gadget for $\{a,b'\}$ is Type~III, $z''=x$, and $w''=u$.
  But then $\{x'',z'',w''\}=\{b,x,u\}$ is a triad in $M \ba a,b'$, contradicting that $\{b,x,y,u\}$ is a cocircuit in $M \ba a,b'$, since $(\{x,y\},u)$ is the gadget for $\{a,b\}$ and $b' \notin \cocl_{M \ba a}(\{b,x,y,u\})$.

  Thirdly, suppose that $x'' = y$.
  We use a similar argument as in the case that $x''=b$.
  Recall that $u'' \neq b$.
  By orthogonality between the cocircuit $\{a,b',y,u',u''\}$ and the triangle $\{x,b,y\}$, we have $u'' =x$.
  We may assume that $u$ is $N$-flexible in $M \ba a,b'$, for otherwise \cref{megagadgetcase} holds.
  So $u$ is in the gadget for $\{a,b'\}$.
  But $u \notin \{x'',y'',u''\}$, as $\{x'',y'',u''\}=\{y,u',x\}$, so the gadget for $\{a,b'\}$ is Type~II or Type~III.
  By \cref{easywin}, $\cl(\{b,x,y\}) = \cl(\{x'',u''\}) = \{x'',u'',b\}$.
  So the gadget for $\{a,b'\}$ is Type~III, $z''=b$, and $w''=u$.
  But then $\{x'',z'',w''\}=\{y,b,u\}$ is a triad in $M \ba a,b'$, contradicting that $\{b,x,y,u\}$ is a cocircuit in $M \ba a,b'$, since $(\{x,y\},u)$ is the gadget for $\{a,b\}$ and $b' \notin \cocl_{M \ba a}(\{b,x,y,u\})$.

  It remains to consider the cases where $y'' \in \{b,u,y\}$ and $x'' = u'$ and the gadget for $\{a,b'\}$ is Type~II or Type~III.
  When $y''=u$, we can argue as in the case that $x''=u$ and $y'' = u'$ (as this argument makes no assumptions on whether the gadget for $\{a,b'\}$ is Type~I, Type~II, or Type~III). 
  So we need only consider the cases where $y'' =b$ or $y'' = y$.
  First suppose $y'' = b$.
  By orthogonality between the cocircuit $\{a,b',b,u',u''\}$ and the triangles $\{x,b,y\}$ and $\{u,b',y\}$, it follows that $u''=y$.
  As $u$ is $N$-flexible in $M \ba a,b'$, the element $u$ is in the gadget for $\{a,b'\}$. 
  The triangle $\{x'',u'',z''\} = \{u',y,z''\}$ intersects the cocircuit $\{a,b,u,x,y\}$ in at least one element so, by orthogonality, $z'' \in \{u,x\}$.
  But neither $\{u',y,u\}$ nor $\{u',y,x\}$ is a triangle, by \cref{easywin}, a contradiction.

  Finally, suppose $y'' = y$.
  By orthogonality between the cocircuit $\{a,b',y,u',u''\}$ and the triangle $\{x,b,y\}$, and since $u'' \neq b$, we have $u'' =x$.
  Recall that $u$ is in the gadget for $\{a,b'\}$, which is Type~II or Type~III.
  The triangle $\{x'',u'',z''\} = \{u',x,z''\}$ intersects the cocircuit $\{a,b,u,x,y\}$ in at least one element so, by orthogonality, $z'' \in \{b,u\}$.
  But $\{u',x,b\}$ is not a triangle, by \cref{easywin}, and neither is $\{u',x,u\}$, for otherwise $b$ does not fully block in the gadget for $\{b,b'\}$.
  This contradiction completes the proof.
\end{subproof}

Let $b' \in B-\{x,y\}$, and suppose that $M \ba b,b'$ is $3$-connected.
By \cref{caseanalysis,megagadgetcase}, $u$ is not $N$-flexible in $M \ba a,b,b'$.
By \cref{setup}, we deduce that $b'$ is $N$-essential in one of $M \ba a,b,u$ and $M \ba a,b / u$.
So the theorem holds for any $b'$ such that $M \ba b,b'$ is $3$-connected.
It remains to consider the case where $M \ba b,b'$ is not $3$-connected.
In this case, either $M \ba b,b'$ has a triangle-triad, or one or two series pairs, by \cref{conn3}.
We next address the subcase where $M \ba b,b'$ has a triangle-triad.

\begin{claim}
  \label{triangletriadcase}
  If $M \ba b,b'$ has a triangle-triad $\{x,s,a\}$, then $u$ is not $N$-flexible in $M \ba a,b,b'$.
\end{claim}
\begin{subproof}
  Suppose $M \ba b,b'$ has a triangle-triad $\{x,s,a\}$.
  Then, by \cref{conn3}, $M \ba a,b'$ is $3$-connected.
  We assume, towards a contradiction, that $u$ is $N$-flexible in $M \ba a,b,b'$.
  In particular, $M \ba a,b'$ is not $N$-fragile.

  First, suppose that $a$ does not fully block the gadget $(\{x,y\},u\}$ for $\{a,b\}$.
  Then $\cl(\{a,b,x\})$ is $3$-separating in $M$, and $u \in \cl(\{a,b,x\})$, by \cref{easywin}.
  Since $M \ba b'$ is $3$-connected, by \cref{conn5}, $b$ fully blocks the triangle-triad $\{x,s,a\}$ of $M \ba b,b'$.
  So $\{x,s,a,b\}$ is a $3$-separating cocircuit in $M \ba b'$.
  Let $Q = \cl_{M \ba b'}(\{x,s,a,b\}) - y$, and observe that $Q$ is also $3$-separating in $M \ba b'$.
  Note that $u \in Q$, since $u \in \cl(\{a,b,x\})$.
  Now $y \in \cocl_{M \ba b'}(Q)$, since $\{a,b,x,y,u\}$ is a cocircuit, and $y \in \cl_{M \ba b'}(Q)$, due to the triangle $\{b,x,y\}$.
  Thus $\lambda(Q \cup y) < 2$, implying 
  $Q \cup y = E(M \ba b')$, as $Q \cup y$ is closed and $M \ba b'$ is $3$-connected.
  Let $L=E(M \ba b')-\{a,b,s,x\}$.
  Then $r(L)=2$, and $|L| \ge 9$, so $L$ contains a triangle that intersects $\{a,b,u,x,y\}$ in a single element, contradicting orthogonality.

  Now suppose that $a$ fully blocks the gadget $(\{x,y\},u)$ for $\{a,b\}$.
  Recall that $M \ba a,b'$ is not $N$-fragile and consider the gadget for $\{a,b'\}$.
  Let this gadget be $(\{x',y'\},u')$ if Type~I, $(x',y',u',z')$ if Type~II, or $(x',y',u',z',w')$ if Type~III.
  There are two cases: 
  \begin{enumerate}
    \item[(i)] $b'$ blocks in the gadget, and $a \in \cl(\{x',y'\})$, or
    \item[(ii)] $a$ blocks in the gadget, and $b' \in \cl(\{x',y'\})$.
  \end{enumerate}
  In case~(i), $b'$ may or may not fully block $\{x',y',u',a\}$, whereas in case~(ii), $a$ may or may not fully block $\{x',y',u',b'\}$.

  We first handle, simultaneously, the non-fully-blocking subcases.
  In these cases, $\{a,b',x',y',u'\}$ is $3$-separating, so this set is a rank-$3$ cocircuit.
  Let $Q = \cl(\{a,b',x',y',u'\})$, so $Q$ is $3$-separating.
  Now, in $M \ba b'$, the set $\{x,s,a\}$ is a triangle, and $\{a,x',y',u'\}$ is a cocircuit, so by orthogonality $\{x,s,a\} \subseteq Q$. 
  Observe that since $\{a,b',x',y',u'\}$ is a cocircuit, $b'$ is not in the guts of the $3$-separation $(Q,E(M)-Q)$.
  Suppose $x$ is in the guts of the $3$-separation $(Q, E(M)-Q)$.
  Then, as $M \ba a,b/x$ has an $N$-minor, by \cref{flexigadgets}, and this matroid 
  is a $2$-sum, each element of $Q$ that is not at the basepoint of the $2$-sum is $N$-contractible.
  In particular, $b'$ is $N$-contractible in $M \ba a,b$, a contradiction.
  So we may assume that $x$ is not in the guts of the $3$-separation $(Q, E(M)-Q)$.
  It now follows, by orthogonality, that the triangle $\{b,x,y\}$ is contained in $Q$.
  Hence $\{a,b,x,y,s,b'\} \subseteq Q$.
  In particular, $b'$ does not fully block $\{x,s,a,b\}$, which is a $3$-separating cocircuit of $M \ba b'$.
  But $y$ is in the guts of this $3$-separation, due to the triangle $\{b,x,y\}$, and $M \ba a,b/y$ has an $N$-minor.
  Again, it follows that $b'$ is $N$-contractible, a contradiction.

  We may now assume, for case~(i) and case~(ii), that $b'$ or $a$, respectively, fully blocks in the gadget for $\{a,b'\}$.
  Suppose we are in case~(i). 
  Since $\{x,s,a\}$ is a triangle and $\{a,u',x',y'\}$ is a cocircuit of $M \ba b'$, by orthogonality $\{s,x\}$ meets $\{u',x',y'\}$.
  
  Suppose $x \notin \{u',x',y'\}$.
  By \cref{easywin}, the triangle $\{x',y',a\}$ is closed.
  Thus $s \notin \{x',y'\}$, for otherwise $\{x',y',a\} = \{x,s,a\}$.
  So $s = u'$.
  But now $\{x,s,a\}$ is a triad in $M \ba b,b'$ that intersects the triangle $\{x',y',a\}$ in a single element, contradicting orthogonality.

  So we may assume that $x \in \{u',x',y'\}$.
  Again by orthogonality, this time due to the triangle $\{b,x,y\}$, we have that $\{b,y\}$ meets $\{u',x',y'\}$.
  So $\{u',x',y'\}$ contains $x$ and either $b$ or $y$.

  Suppose the other element in this set is $u$.
  By \cref{easywin}, $\{x',y',a\}$ is closed, so $\{x',y'\}\nsubseteq \{b,x,y\}$, implying $u \in \{x',y'\}$.
  Without loss of generality, let $u=y'$ and $\{x',u'\} \subseteq \{x,b,y\}$.
  But now, in $M \ba a,b$, either $\{b',x,y,u\}$ or $\{b',x,u\}$ is a cocircuit, contradicting that $\{u,x,y\}$ is the unique triad containing $u$ in $M \ba a,b$.
  So $u \notin \{u',x',y'\}$.
  By the assumption that $u$ is $N$-flexible in $M \ba a,b'$,
  we deduce that the gadget for $\{a,b'\}$ is not Type~I.

  Next, we claim that $u \notin \cl(\{u',x',y'\})$, which will imply that the gadget for $\{a,b'\}$ is not Type~II.
  Consider the set $P=\{a,u',x',y'\} \cup \{b,x,y\}$.
  Recall that $x \in \{u',x',y'\}$, and, by orthogonality, $\{b,y\}$ meets $\{u',x',y'\}$, so $|P|=5$.
  Observe also that $P$ is $3$-separating in $M \ba b'$.
  Since $\{a,b,x,y,u\}$ is a cocircuit, $u \in \cocl_{M \ba b'}(P)$.
  If $u \in \cl(P)=\cl(\{u',x',y'\})$, then $\lambda_{M \ba b'}(P \cup u) < 2$, implying $|E(M\ba b')-(P \cup u)| \le 1$.
  But then $|E(M)| \le |P \cup u| + 2 \le 8$, a contradiction.
  So the gadget for $\{a,b'\}$ is not Type~II.

  Thus the gadget for $\{a,b'\}$ is Type~III and $u=w'$.
  Recall that $(y',u',x',z',w')$ is a fan ordering of $M \ba a,b'$. 
  Since the $5$-element fan is maximal 
  and contains two elements of the triangle $\{b,x,y\}$, it must contain all three elements; that is, $\{u',x',z'\} = \{b,x,y\}$.
  Then $\{u',x',z',w'\}$ is a cocircuit in $M \ba a,b'$, and $u'$ is in the closure and coclosure of the triad $\{x',z',w'\}$ of $M \ba a,b'$, so $\lambda_{M \ba a,b'}(\{x',z',w',u'\}) = 1$, a contradiction.
  This completes case (i).

  Suppose we are in case (ii).
  Recall that $\{a,s,x\}$ is a triangle-triad of $M \ba b,b'$, so, in particular, it is a triangle of $M \ba b'$.
  Since $\{a,x',y',u'\}$ is a cocircuit of $M \ba b'$, we deduce, by orthogonality, that $\{s,x\} \cap \{x',y',u'\} \neq \emptyset$.

  Consider the case where $x \in \{x',y',u'\}$.
  If $s \in \{x',y',u'\}$, then $a \in \cl(\{x',y',u'\})$, contradicting that $a$ fully blocks in the gadget for $\{a,b'\}$.
  So $s \notin \{x',y',u',a,b'\}$.
  Now $\{x,b,y\}$ is a triangle that meets the triad $\{u',x',y'\}$ of $M \ba a,b'$ in the element $x$, so, by orthogonality and since $M \ba a,b'$ is $3$-connected, the triangle and triad intersect in precisely two elements.
  Thus either $y$ or $b$ is in the guts of the $3$-separating set $\{b',x',y',u'\}$ of $M \ba a$.
  In the former case, where $y$ is in the guts, we have $b \in \{u',x',y'\}$.
  As $M \ba a,b / y$ has an $N$-minor, and $\{b',u',x',y'\}-b$ is a triangle-triad in this matroid, it follows from \cref{minor3conn} that $b'$ is $N$-contractible, a contradiction.
  In the latter case, where $b$ is in the guts, we have $\{x,y\} \subseteq \{u',x',y'\}$.
  By \cref{easywin}, 
  one of $x$ and $y$ is $u'$, whereas the other is in $\{x',y'\}$.
  In $M \ba a,b$, the set $\{x,y,u\}$ is a triad, $\{b',x',y'\}$ is a triangle, and these two sets meet.
  By orthogonality, and since $b' \notin \{x,y,u\}$, we have $u \in \{x',y'\}$.
  So $\{u,x,y\} = \{u',x',y'\}$, and $\{b',u',x',y'\} = \{b',x,y,u\}$ is a cocircuit of $M \ba a,b$, a contradiction.

  We may now assume we are in the case where $s \in \{x',y',u'\}$ and $x \notin \{x',y',u'\}$.
  In $M \ba a,b'$, the unique triad containing $u'$ is $\{u',x',y'\}$.
  Since $\{a,s,x\}$ is a triangle-triad of $M \ba b,b'$ that is fully blocked by $b$, since $M \ba b'$ is $3$-connected, the set $\{a,b,s,x\}$ is a cocircuit of $M \ba b'$.
  So $\{s,b,x\}$ is a triad of $M \ba a,b'$.
  Suppose $s=u'$.
  Now $\{b,x\} = \{x',y'\}$, but then $b' \in \cl(\{b,x,y\})-\{b,x,y\}$, contradicting \cref{easywin}.
  So $s \in \{x',y'\}$.

  Now $x \notin \cl(\{x',y',u'\})$, since $\{x,s,a\}$ is a triangle and $a$ fully blocks in the gadget for $\{a,b'\}$.
  Moreover, as $\{x,b,y\}$ is a triangle, and $\{a,b',u',x',y'\}$ is a cocircuit that avoids $x$, this also implies, by orthogonality, that $\{b,y\} \cap \{u',x',y'\} = \emptyset$.

  Suppose that $u \in \{u',x',y'\}$.
  Note that $\{x',y'\} \neq \{s,u\}$, by orthogonality between the triangle $\{x',b',y'\}$ and the triad $\{u,x,y\}$ of $M \ba a,b$.
  So $u=u'$.
  We assume that $s=x'$ in what follows, but the argument is the same when $s=y'$, only with the roles of $x'$ and $y'$ swapped.
  Since $u$ is $N$-flexible in $M \ba a,b,b'$, and $\{s,y'\}$ is a series pair in $M \ba a,b,b',u$, the matroid $M \ba a,b /y'$ has an $N$-minor.
  As $\{s,b'\}$ is a parallel pair in $M \ba a,b /y'$, it follows that $s$ is $N$-deletable in $M \ba a,b$.
  But since $\{s,x\}$ is a series pair in $M \ba a,b,b'$, the element $s$ is also $N$-contractible in $M \ba a,b$.
  So $s$ is an $N$-flexible element in $M \ba a,b$, but $s \notin \{u,x,y\}$, a contradiction.

  Now $u \notin \{u',x',y'\}$.
  If the gadget for $\{a,b'\}$ is Type~I, then, as $u$ is $N$-flexible in $M \ba a,b'$, we have $u \in \{u',x',y'\}$, a contradiction.
  So we may assume that the gadget for $\{a,b'\}$ is Type~II or Type~III.
  First assume that $u = z'$.
  Now $\{x',u',z'\}$ is a triangle, and recall that $\{b,s,x\}$ is a triad in $M \ba a,b'$.
  By orthogonality, if $s=x'$, then $u' \in \{b,x\}$, a contradiction.
  So $s=y'$.
  Since $u$ is $N$-flexible in $M \ba a,b,b'$,
  and $\{x',u'\}$ is a parallel pair in $M \ba a,b,b'/u$, the matroid $M \ba a,b,b',u'$ has an $N$-minor.
  In turn, $\{x',s\}$ is a series pair in $M \ba a,b,b',u'$, so $M \ba a,b/x'$ has an $N$-minor.  Finally, $\{b',s\}$ is a parallel pair in this matroid, so $s$ is $N$-deletable in $M \ba a,b$.  But then $s$ is $N$-flexible in $M \ba a,b$, a contradiction.

  Finally, assume that the gadget for $\{a,b'\}$ is Type~III and $u = w'$.
  As $M \ba a,b,u$ has an $N$-minor, and $\{b',u',x',y',z'\}$ is $2$-separating in $M \ba a,b,u$, with $b' \in \cocl_{M \ba a,b,u}(\{u',x',y'\})$, it follows from \cref{minor3conn} that $b'$ is $N$-contractible in $M \ba a,b$, a contradiction.
\end{subproof}

Recall that, when $M \ba b,b'$ is not 3-connected, it has either a triangle-triad, or one or two series pairs, by \cref{conn3}.
Now we address the subcase where $M \ba b,b'$ has two series pairs.

\begin{claim}
  \label{twoseriespairscase}
  Suppose $M \ba b,b'$ is $3$-connected up to series pairs.
  If $M \ba b,b'$ has two series pairs, then $u$ is not $N$-flexible in $M \ba a,b,b'$.
\end{claim}
\begin{subproof}
%
  Suppose $M \ba b,b'$ has distinct series pairs $\{x,s\}$ and $\{y,t\}$.
  Recall that $M \ba b'$ is $3$-connected, and $\{x,b,y\}$ is a triangle.
  Hence $\{b,x,y,s,t\}$ is $3$-separating in $M \ba b'$.
  Moreover, since $\{a,b,x,y,u\}$ is a cocircuit of $M$, we have $u \in \cocl_{M \ba a,b'}(\{b,x,y,s,t\})$.
  Let $F=E(M)-\{x,y,a,b,b',s,t,u\}$.
  Now $\lambda_{M \ba a,b',u}(F)=1$.
  We may assume that $M \ba a,b,b',u$ has an $N$-minor, for otherwise $u$ is not $N$-flexible in $M \ba a,b,b'$.
  Since $M \ba a,b,b'$ has an $N$-minor, and $\{x,s\}$ and $\{y,t\}$ are series pairs in this matroid, $s$ and $t$ are $N$-contractible in $M \ba a,b$.
  If $s = u$, say, then $\{x,u,b'\}$ is a triad of $M \ba b$, and hence also of $M \ba a,b$, contradicting that $\{u,x,y\}$ is the unique triad of $M \ba a,b$ containing $u$.
  So $s \neq u$ and, similarly, $t \neq u$.
  Recall that, as the gadget for $\{a,b\}$ is Type~I, the $(N,B)$-robust elements of $M \ba a,b$ are contained in $\{x,y,u\}$.
  Since neither $s$ nor $t$ is $(N,B)$-robust in $M \ba a,b$, we have $s,t \in B^*$.
  Now $(E(M)-F) \cap B = \{x,y,b'\}$, so $|F \cap B| = r(M)-3$.
  Since $E(M)-F$ contains the cocircuits $\{x,s,b,b'\}$, $\{y,t,b,b'\}$, and $\{x,y,u,a,b\}$, we have $r(F) \le r(M)-3$.
  So $r(F) = r(M)-3$ and $F \cap B$ spans $F$.

  We claim that $\{x,b,y\}$ is the unique triangle of $M \ba b'$ that meets $\{x,y\}$.
  Towards a contradiction, there is a triangle~$T$ of $M \ba b'$ distinct from $\{x,b,y\}$ that meets $\{x,y\}$.
  Suppose $x \in T$.
  Since $\{x,s,b\}$ is a triad of $M \ba b'$, the triangle~$T$ meets $\{s,b\}$ by orthogonality.
  But $\{x,b,y\}$ is closed, by \cref{easywin}, so $T=\{x,s,g\}$ for some $g \in E(M)-\{b,x,y,s,t,b'\}$.
  Moreover, as $M \ba b,b'$ is $3$-connected up to the two series pairs $\{x,s\}$ and $\{y,t\}$, the matroid $M \ba b,b'/x,y$ is $3$-connected.
  But $\{s,g\}$ is a parallel pair in this matroid, a contradiction.
  A similar argument applies when $y \in T$.

  Suppose there exist $p \in F \cap B$ and $q \in F \cap B^*$ such that the pivot $A^{pq}$ is allowable.
  Let $B' = B \triangle \{p,q\}$.
  By \cref{nostronginbasis}, $q$ is not $(N,B')$-strong.
  We claim that $p$ is also not $(N,B')$-strong.
  Towards a contradiction, suppose that $p$ is $(N,B')$-strong.
  Then, by \cref{strongprops}, $p$ is in a triad $T^*$ of $M \ba a,b$ that meets $\{x,y\}$, and $(T^*-p) \cup e$ is a triangle-triad in $M \ba f,p$ for some $\{e,f\} = \{a,b\}$.
  As $(T^*-p) \cup e$ is a triangle of $M$ that meets $\{x,y\}$, this triangle is $\{x,b,y\}$.
  So $(T^*-p) \cup e = \{x,b,y\}$ is a triangle-triad in $M \ba a,p$.
  But then $\{x,y,p\}$ is a triad in $M \ba a,b$.
  Since $p \neq u$, as $p \in F$ but $u \notin F$, the set $\{u,p,x,y\}$ is a cosegment in $M \ba a,b$, contradicting that the unique triad containing $u$ is $\{u,x,y\}$.
  So $p$ is not $(N,B')$-strong, as claimed.

  For any $q \in F \cap B^*$, as $F \cap B$ spans $q$, the entries in the column of $A$ labelled by $q$ are zero in any row labelled by an element of $B-F$, and, in particular, $A_{b'q} = A_{xq} = A_{yq} = 0$.
  Since each $q \in F \cap B^*$ is not a loop, there exists some $p \in F \cap B$ such that $A_{pq} \neq 0$.
  Similarly, for any $p \in F \cap B$ that is not a coloop of $M|F$, there exists some $q \in F \cap B^*$ such that $A_{pq} \neq 0$.
  By \cref{allowablenonxy}, the pivot $A^{pq}$ is allowable.
  If $q$ is $N$-contractible, or $p$ is $N$-deletable, then there are more $(N,B \triangle \{p,q\})$-robust elements than $(N,B)$-robust elements.
  As $p$ and $q$ are not $(N,B\triangle \{p,q\})$-strong, as shown in the previous paragraph, this contradicts the fact that $B$ is a bolstered basis.

  So each element of $F$ is either $N$-essential in $M \ba a,b$, or it is a coloop of $M|F$ that is in $B$, so it is $N$-deletable, and not $N$-contractible, in $M \ba a,b$.
%
  Let $Z$ be the set of coloops of $M|F = M \ba \{a,b,b',x,y,s,t,u\}$, and let $G=\{x,y,s,t\}$, so $(F,G)$ is a $2$-separation in $M \ba a,b,b',u$.
  Then 
  $Z \subseteq \cocl_{M\ba a,b,b',u}(G) \cap F$.
  So $r_{M \ba a,b,b',u}^*(Z) 
  \le r^*_{M \ba a,b,b',u}(G) + r^*_{M \ba a,b,b',u}(F) - r^*(M \ba a,b,b',u) = \lambda_{M \ba a,b,b',u}(G) \le 1$, by submodularity.
  Therefore $r_{M \ba a,b,b',u}^*(Z) \le 1$; that is,
  the coloops of $M|F$ are either coloops of $M \ba a,b,b',u$, or they are contained in a series class of $M \ba a,b,b',u$, where $M \ba a,b,b',u$ has an $N$-minor.
  In the former case the elements of $Z$ are $N$-flexible in $M \ba a,b,b',u$, so they are also $N$-flexible in $M \ba a,b$, a contradiction.
  So 
  $Z$ is contained in a series class $S$ of $M \ba a,b,b',u$.
  Say some $s \in Z$ is not $N$-essential in $M \ba a,b,b',u$.
  Recall that each element of $Z$ is $N$-deletable and not $N$-contractible in $M \ba a,b$, so $M \ba a,b,b',u,s$ has an $N$-minor.
  Then, in $M \ba a,b,b',u,s$, any element $s' \in S-s$ is a coloop, so it is $N$-flexible in $M \ba a,b$.
  Thus, if $|Z| \ge 2$, then $Z$ contains an $N$-flexible element, a contradiction. So
  $|Z| \le 1$.
  Now at most one element of $F$ is not $N$-essential in $M \ba a,b$, so
  $|F| \le |E(N)|+1$, and thus $|E(M)| = |F| +8 \le |E(N)| + 9$, a contradiction.
\end{subproof}

It remains only to address the subcase where $M \ba b,b'$ has a single series pair.  We break this into two parts, depending on whether or not $M \ba a,b'$ is $3$-connected.

\begin{claim}
  \label{singleseriespaircase1}
  Suppose $M \ba b,b'$ is $3$-connected up to series pairs.
  If $M \ba b,b'$ has a single series pair and $M \ba a,b'$ is not $3$-connected, then $u$ is not $N$-flexible in $M \ba a,b,b'$. 
\end{claim}
\begin{subproof}
  Suppose $M \ba b,b'$ has a single series pair $\{x,t\}$ and $M \ba a,b'$ is not $3$-connected.
  By \cref{conn2}, $M \ba a,b'$ is $3$-connected up to series classes, and every series pair of $M \ba a,b'$ avoids $\{x,y\}$.
  Let $\{j,k\}$ be such a series pair of $M \ba a,b'$.
  Since $M \ba a,b,b'$ has an $N$-minor, $j$ and $k$ are $N$-contractible in $M \ba a,b$.
  If $u = j$, say, then $\{b',u,k\}$ is a triad of $M \ba a$, and hence also of $M \ba a,b$, contradicting that $\{u,x,y\}$ is the unique triad of $M \ba a,b$ containing $u$.
  So $u \neq j$ and, similarly, $u \neq k$.
  By a similar argument, $u \neq t$.
  Thus, $j$, $k$ and $t$ are not $(N,B)$-robust in $M \ba a,b$, and hence $j,k,t \in B^*$.

  Let $F=E(M)-\{a,b,b',x,y,t,u,j,k\}$.
  Then $(E(M)-F) \cap B = \{b',x,y\}$, so $|B \cap F| = r(M)-3$.
  Note that $b' \notin \cocl_{M \ba a,b}(\{u,x,y\})$, since $\{u,x,y\}$ is the unique triad of $M \ba a,b$ containing $u$, so $\{u,x,y\}$ is a triad of $M \ba a,b,b'$.
  Since $M \ba a,b$ is $3$-connected, $\{j,k\}$ and $\{x,t\}$ are series pairs in $M \ba a,b,b'$.
  Now, due to the cocircuits $\{u,x,y\}$, $\{j,k\}$ and $\{x,t\}$ of $M \ba a,b,b'$, we have $r(F) \le r(M \ba a,b,b') - 3 = r(M)-3$.
  So $r(F) = r(M)-3$ and $B \cap F$ spans $F$.
  The argument that follows uses a similar approach to \cref{twoseriespairscase}, with some subtle, but important, differences.

  We claim that $\{x,b,y\}$ is the unique triangle of $M \ba b'$ that contains $x$.
  Towards a contradiction, suppose $x$ is in a triangle~$T$ of $M \ba b'$ distinct from $\{x,b,y\}$.
  Since $\{x,t,b\}$ is a triad of $M \ba b'$, the triangle~$T$ meets $\{t,b\}$ by orthogonality.
  But $\{x,b,y\}$ is closed, by \cref{easywin}, so $T=\{x,t,g\}$ for some $g \in E(M)-\{b,b',x,t\}$.
  Moreover, $M \ba b,b'$ is $3$-connected up to the single series pair $\{x,t\}$, so $M \ba b,b'/x$ is $3$-connected.
  But $\{t,g\}$ is a parallel pair in this matroid, a contradiction.

  Suppose there exist $p \in F \cap B$ and $q \in F \cap B^*$ such that the pivot $A^{pq}$ is allowable.
  Let $B' = B \triangle \{p,q\}$.
  By \cref{nostronginbasis}, $q$ is not $(N,B')$-strong.
  We claim that $p$ is also not $(N,B')$-strong.
  Towards a contradiction, suppose that $p$ is $(N,B')$-strong.
  Then, by \cref{strongprops}, $p$ is in a triad $T^*$ of $M \ba a,b$ such that $T^* \cap B'$ is a non-empty subset of $\{x,y\}$, and $(T^*-p) \cup e$ is a triangle-triad in $M \ba f,p$ for some $\{e,f\} = \{a,b\}$.
  If $x \in T^*$, then $(T^*-p) \cup e = \{x,b,y\}$, so $\{x,b,y\}$ is a triangle-triad in $M \ba a,p$.
  This implies that $\{x,y,p\}$ is a triad in $M \ba a,b$.
  But then, as $p \neq u$, since $p \in F$ but $u \notin F$, the set $\{u,p,x,y\}$ is a cosegment in $M \ba a,b$, contradicting that the unique triad containing $u$ is $\{u,x,y\}$.
  So $x \notin T^*$; therefore $T^* \cap B' = \{y\}$.
  Now $T^* = \{y,p,v\}$ is a triad of $M \ba a,b$, with $v \notin B'$. 
  Moreover, $v \neq u$, since the unique triad of $M \ba a,b$ that contains $u$ is $\{u,x,y\}$.
  Hence $\{u,x,y,p,v\}$ is a confining set relative to the basis $B'$, so $|E(M)| \le |E(N)| + 9$ by \cref{confiningset}, a contradiction.
  So $p$ is not $(N,B')$-strong, as claimed.

  For any $q \in F \cap B^*$, as $F \cap B$ spans $q$, we have $A_{b'q} = A_{xq} = A_{yq} = 0$; and, as $q$ is not a loop, there exists some $p \in F \cap B$ such that $A_{pq} \neq 0$.
  Moreover, for any $p \in F \cap B$ that is not a coloop of $M|F$, there exists some $q \in F \cap B^*$ such that $A_{pq} \neq 0$.
  By \cref{allowablenonxy}, the pivot $A^{pq}$ is allowable.
  If $q$ is $N$-contractible, or $p$ is $N$-deletable, then there are more $(N,B \triangle \{p,q\})$-robust elements than $(N,B)$-robust elements.
  As $p$ and $q$ are not $(N,B\triangle \{p,q\})$-strong, as shown in the previous paragraph, this contradicts the fact that $B$ is a bolstered basis.

  So each element of $F$ is either $N$-essential in $M \ba a,b$, or it is a coloop of $M|F$ that is in $B$, so it is $N$-deletable, and not $N$-contractible, in $M \ba a,b$.
  We may assume that $M \ba a,b,b',u$ has an $N$-minor, for otherwise $u$ is not $N$-flexible in $M \ba a,b,b'$.
  Now, in particular, each element of $F$ is either $N$-essential in $M \ba a,b,b',u$, or it is a coloop of $M|F$ that is $N$-deletable, and not $N$-contractible, in $M \ba a,b,b',u$.

  Let $Z$ be the set of coloops in $M|F = M \ba \{a,b,b',u,x,y,j,k,t\}$, and let $G=\{x,y,j,k,t\}$, so $(F,G)$ is a partition of $E(M \ba a,b,b',u)$.
  Then 
  $Z \subseteq \cocl_{M \ba a,b,b',u}(G) \cap F$.
  In $M \ba a,b,b',u$, the set $\{x,y,t\}$ is contained in a series class, and $\{j,k\}$ is a series pair, so $r_{M \ba a,b,b',u}^*(Z) \le r^*_{M \ba a,b,b',u}(G) + r^*_{M \ba a,b,b',u}(F) - r^*(M \ba a,b,b',u) = \lambda_{M \ba a,b,b',u}(G) \le 2$, by submodularity.
  If every element of $Z$ is $N$-essential in $M \ba a,b,b',u$, then every element of $F$ is $N$-essential in $M \ba a,b,b',u$, and so $|F| \le |E(N)|$, and thus $|E(M)| = |F| + 9 \le |E(N)| + 9$, a contradiction.
  So there exists some $f \in Z$ such that $M \ba \{a,b,b',u,f\}$ has an $N$-minor.
  Suppose there also exists some $f' \in Z-f$ such that $M \ba \{a,b,b',u,f,f'\}$ has an $N$-minor.
  If $\{f,f'\}$ is coindependent in $M \ba a,b,b',u$, then, as $\lambda_{M \ba \{a,b,b',u,f\}}(G) \le 1$ and $f'$ is not a coloop in $M \ba \{a,b,b',u,f\}$, we have $\lambda_{M \ba \{a,b,b',u,f,f'\}}(G)=0$, in which case $j$ and $k$ are $N$-flexible in $M \ba a,b$, a contradiction.
  So $r^*_{M \ba a,b,b',u}(\{f,f'\}) \le 1$.
  Then $f$ and $f'$ are $N$-contractible in $M \ba a,b,b',u$, 
  a contradiction.
  So every element of $Z-f$, and indeed $F-f$, is $N$-essential in $M \ba \{a,b,b',u,f\}$.
  Thus $|F-f| \le |E(N)|$, and $|E(M)| = |F| +9 \le |E(N)| + 10$,
  a contradiction.
\end{subproof}

\begin{claim}
  \label{singleseriespaircase2}
  Suppose $M \ba b,b'$ is $3$-connected up to series pairs.
  If $M \ba b,b'$ has a single series pair and $M \ba a,b'$ is $3$-connected, then 
  $u$ is not $N$-flexible in $M \ba a,b,b'$.
\end{claim}
\begin{subproof}
  Suppose $M \ba b,b'$ has a single series pair $\{x,t\}$ and $M \ba a,b'$ is $3$-connected.
  If $M \ba a,b'$ is $N$-fragile, then $u$ is not $N$-flexible in $M \ba a,b'$ as required. So we may assume that $M \ba a,b'$ is not $N$-fragile and thus there is a gadget for $\{a,b'\}$.
  Let this gadget be $(\{x',y'\},u')$ if Type~I, $(x',y',u',z')$ if Type~II, or $(x',y',u',z',w')$ if Type~III.
  There are two cases: 
  \begin{enumerate}
    \item[(i)] $a$ blocks in the gadget, and $b' \in \cl(\{x',y'\})$, or
    \item[(ii)] $b'$ blocks in the gadget, and $a \in \cl(\{x',y'\})$.
  \end{enumerate}

  Since $M \ba b$ and $M \ba b'$ are $3$-connected, $\{b,b',x,t\}$ is a cocircuit of $M$.
  Note that $\{x,t\}$ is a series pair in $M \ba a,b,b'$.
  Since this matroid has an $N$-minor, $t$ is $N$-contractible in $M \ba a,b$.
  Clearly $t \notin \{b,x\}$.
  Moreover, since $M \ba a,b$ is $3$-connected, and $\{u,x,y\}$ is the unique triad containing $u$, we have $t \notin \{a,u,y\}$.
  Thus, as $t$ is not $(N,B)$-robust, $t \in B^* - \{a,b\}$.

  Suppose we are in case (i), so $\{b',x',y'\}$ is a triangle of $M$.
  By orthogonality with the cocircuit $\{b,b',x,t\}$, we may assume $x' \in \{b,x,t\}$ (note that we lose no generality by making this assumption, since the argument that follows applies regardless of whether the gadget for $\{a,b'\}$ is Type~I, Type~II, or Type~III).
  Suppose $x' \in \{b,x\}$.
  In $M \ba b'$, the set $\{b,x,y\}$ is a triangle that meets the cocircuit $\{a,u',x',y'\}$.
  By orthogonality, these sets intersect in at least two elements.
  But if $y' \in \{b,x,y\}$, then $b' \in \cl(\{b,x,y\})$, contradicting \cref{easywin}.
  So $u' \in \{b,x,y\}$.
  Now $M \ba a,u'$ has the triangle-triad $\{b',x',y'\}$, so this matroid is not $3$-connected.  But this contradicts \cref{switchbxy} when $u' \in \{x,y\}$, or the fact that $M \ba a,b$ is $3$-connected when $u' = b$.

  Now we may assume that $x'=t$, so $\{b',t,y'\}$ is a triangle.
  Since $t$ is $N$-contractible, $y'$ is $N$-deletable in $M \ba a,b$.
  Suppose $y' \in \{a,b,x,y,u\}$.
  By orthogonality between the triangle $\{b',t,y'\}$, and the cocircuit $\{a,b,x,y,u\}$ of $M$, the set $\{b',t\}$ meets $\{a,b,x,y,u\}$, contradicting that $t \notin \{a,b,x,y,u\}$.
  We deduce that $y' \notin \{a,b,x,y,u\}$, so $y'$ is not $(N,B)$-robust.
  Thus $y' \in B - \{x,y\}$.

  Now $\{b',t,y'\}$ is a triangle with $b',y' \in B$.
  Then $A_{b't} \neq 0$ and $A_{xt}=A_{yt}=0$.
  By \cref{allowablenonxy}, the pivot~$A^{b't}$ is allowable.
  Let $B' = B \triangle \{b',t\}$.
  As $u$ is the only $(N,B)$-strong element in $M \ba a,b$, the $(N,B')$-strong elements of $M \ba a,b$ are contained in $\{u,b',t\}$.
  By \cref{nostronginbasis}, $t \in B'$ is not $(N,B')$-strong.
  Suppose that $b'$ is $(N,B')$-strong.
  Then, by \cref{strongprops}, $b'$ is in a triad $T^*$ of $M \ba a,b$ such that $T^* \cap B'$ is a non-empty subset of $\{x,y\}$, and $(T^*-b') \cup e$ is a triangle-triad in $M \ba f,b'$ for some $\{e,f\} = \{a,b\}$.
  By orthogonality between $T^*$ and the triangle $\{b',t,y'\}$, and since $y' \in B'-\{x,y\}$, we have $t \in T^*$.
  If $T^*=\{b',t,y\}$, then, as $\{b',t,x\}$ and $\{x,y,u\}$ are also triads of $M \ba a,b$, the set $\{b',t,y,x,u\}$ is a cosegment, contradicting that the unique triad containing $u$ is $\{u,x,y\}$.
  So $T^*=\{b',t,x\}$, in which case $\{t,x,e\}$ is a triangle-triad in $M \ba f,b'$ for some $\{e,f\} = \{a,b\}$.
  If $e =b$, then $\{x,b,y,t\}$ is a segment in $M$, contradicting \cref{easywin}.
  On the other hand, if $e=a$, then $\{t,x,a\}$ is a triangle-triad in $M \ba b,b'$, contradicting that $\{x,t\}$ is a series pair in $M \ba b,b'$.
  So $b'$ is not $(N,B')$-strong.
  Now $u$ is the only $(N,B')$-strong element of $M \ba a,b$, but $t$ is $(N,B')$-robust, so the number of $(N,B')$-robust elements is greater than the number of $(N,B)$-robust elements, contradicting that $B$ is a bolstered basis.

  Now suppose we are in case~(ii), so $\{a,x',y'\}$ is a triangle of $M$.
  Since $\{u,x,y,a,b\}$ is a cocircuit of $M$, by orthogonality $\{x',y'\}$ meets $ \{u,x,y,b\}$.

  Assume that $x' \in \{b,x,y\}$.
  In $M \ba b'$, the set $\{b,x,y\}$ is a triangle that meets the cocircuit $\{a,u',x',y'\}$.
  By orthogonality, these sets intersect in at least two elements.
  But if $y' \in \{b,x,y\}$, then $a \in \cl(\{b,x,y\})$, 
  contradicting \cref{easywin}.
  So $y' \notin \{b,x,y\}$ and hence $u' \in \{b,x,y\}$.
  Now $u' \neq b$ otherwise $M \ba b,b'$ has a triangle-triad $\{a,x',y'\}$, a contradiction.
  Therefore $u' \in \{x,y\}$.

  Suppose $x' = b$.
  By orthogonality between the triangle $\{x',y',a\}$ and the cocircuit $\{b,b',x,t\}$, we have $y' \in \{b',x,t\}$.
  But $y' \notin \{b,x,y\}$, so $y' = t$.
  Now, in $M \ba a,b'$, the unique triad containing $u'$ is $\{u',b,t\}$.
  But $\{x,b,t\}$ is a triad of $M \ba a,b'$, and $M \ba a,b'$ is $3$-connected, so $u'=x$.
  Now $\{a,b',x,b,t\}$ and $\{b,b',x,t\}$ are cocircuits of $M$, a contradiction.

  So $x' \in \{x,y\}$.
  Without loss of generality, let $u'=x$ and $x'=y$.
  In $M \ba a,b'$, the unique triad containing $u'=x$ is $\{u',x',y'\} = \{x,y,y'\}$.
  But $\{b,x,t\}$ is also a triad of $M \ba a,b'$, so $\{y',y\}=\{b,t\}$, a contradiction.

  Now we may assume that $x' \notin \{b,x,y\}$.
  Similarly, $y' \notin \{b,x,y\}$.
  So $x'$ or $y'$ is $u$.
  Without loss of generality let $x'=u$.  Then $y' \notin \{a,b,x,y,u\}$.
  Moreover, $u' \notin \{b,x,y\}$, for otherwise the triangle $\{b,x,y\}$ and the cocircuit $\{a,b',u',u,y'\}$ intersect in a single element, contradicting orthogonality.
  We may assume that $M \ba a,b,b',u$ has an $N$-minor, for otherwise $u$ is not $N$-flexible in $M \ba a,b,b'$.
  Since $\{x,t,b,b'\}$ is a cocircuit in $M$, we have $r^*_{M \ba a,b,b',u}(\{x,t\}) \le 1$.
  But if $t$ is a coloop in $M \ba a,b,b',u$, then it is $N$-flexible in this matroid and hence in $M \ba a,b$, a contradiction.
  So $\{x,t\}$ is a series pair in $M \ba a,b,b',u$.
  Moreover, $\{x,y\}$ is a series pair in $M \ba a,b,u$, and $b' \notin \cocl_{M \ba a,b,u}(\{x,y\})$, for otherwise $b'$ is $N$-contractible in $M \ba a,b$.
  So $\{x,y,t\}$ is contained in a series class of $M \ba a,b,b',u$.
  Thus $M \ba a,b,b',u / x,y$, and hence $M \ba a,b' / x,y$, has an $N$-minor.
  As $b$ is a loop in $M \ba a,b' / x,y$, it is $N$-flexible in $M \ba a,b' / x,y$ and hence in $M \ba a,b'$.
  Moreover, $M \ba a,b' / x,b$ has an $N$-minor, where $y$ is a loop in this matroid, so $y$ is also $N$-flexible in $M \ba a,b'$.
  Similarly, $M \ba a,b' / y,b$ has an $N$-minor, so $x$ is $N$-flexible in $M \ba a,b'$.
  Now each element in $\{x,y,b\}$ is $N$-flexible in $M \ba a,b'$, so these elements are in the gadget for $\{a,b'\}$.
  But $\{x,y,b\} \cap \{u',x',y'\} = \emptyset$, and there are at most two elements outside of $\{u',x',y'\}$ that are $N$-flexible in $M \ba a,b'$, a contradiction.
\end{subproof}

Recall that the theorem holds for any $b'\in B-\{x,y\}$ such that $M \ba b,b'$ is $3$-connected, so we may assume that $M \ba b,b'$ is not $3$-connected.
By \cref{conn3,triangletriadcase,twoseriespairscase,singleseriespaircase1,singleseriespaircase2}, $u$ is not $N$-flexible in $M \ba a,b,b'$.
By \cref{setup}, $b'$ is $N$-essential in one of $M \ba a,b,u$ and $M \ba a,b / u$. 
So the theorem indeed holds for any $b' \in B-\{x,y\}$.
\end{proof}

In order to improve the bounds obtained when applying this result, it would be desirable to relax the requirement in \cref{thegrandfantasy} that $|E(M)| \ge |E(N)|+11$, instead only requiring that $|E(M)| \ge |E(N)|+10$.
Although there are only two places in the proof where this bound is required, once each in the proofs of \cref{megagadgetcase,singleseriespaircase1}, 
whether such an improvement can indeed be made remains open.

\section{Final remarks}

When applying \cref{thegrandfantasy}, it may be useful to obtain a $3$-connected $N$-fragile minor of $M \ba a,b$; the next lemma enables one to do this.

\begin{lemma}
  \label{subfrag3conn}
  \hb\ Suppose $M$ has a pair of elements $\{a,b\}$ such that $M\ba a,b$ is $3$-connected with an $N$-minor, $|E(M)| \ge |E(N)| + 10$, and $M \ba a,b$ is not $N$-fragile, so \cref{bcosw-thm}(ii)(b) holds.
  Suppose the gadget for $\{a,b\}$ is Type~I, and $M \ba a,b,u/x$ is not $N$-fragile.
  Then either $M \ba a,b,u/x,y$ or $M \ba a,b,u/x \ba y$ is $3$-connected and $N$-fragile.
\end{lemma}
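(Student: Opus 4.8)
The plan is to work throughout with the matroid $M_1 = M\ba a,b,u/x$, and to prove three things: (1) $M_1$ is $3$-connected with an $N$-minor; (2) $y$ is the \emph{unique} $N$-flexible element of $M_1$; and (3) $y$ lies outside every triangle of $M_1$ or outside every triad of $M_1$. Granting these, $M_1\ba y$ or $M_1/y$ is $3$-connected, and since $y$ is $N$-flexible in $M_1$, both these minors have $N$-minors; a short argument (below) shows they are moreover $N$-fragile, which gives the lemma. For (1): since $a$ blocks in the gadget, $\{b,x,y\}$ is a triangle and $\{u,x,y\}$ is the unique triad of $M\ba a,b$ containing $u$. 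As $u$ is $(N,B)$-strong, $\co(M\ba a,b,u)$ is $3$-connected with an $N$-minor, and, exactly as in the proof of \cref{strongprops} (a series class of size at least three would be a confining set, contradicting \cref{confiningset} since $|E(M)|\ge|E(N)|+10$), the matroid $M\ba a,b,u$ is $3$-connected up to series pairs. A series pair $\{s,t\}$ of $M\ba a,b,u$ gives a cocircuit $\{s,t\}$ or $\{s,t,u\}$ of $M\ba a,b$; the former contradicts the $3$-connectivity of $M\ba a,b$, and the latter forces $\{s,t,u\}=\{u,x,y\}$, so $\{x,y\}$ is the only series pair. Hence $\co(M\ba a,b,u)=M\ba a,b,u/x=M_1$ is $3$-connected with an $N$-minor.

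For (2): if some $e\in E(M_1)-y$ were $N$-flexible in $M_1$, then $M\ba a,b\ba e\supseteq M_1\ba e$ and $M\ba a,b/e\supseteq M_1/e$ would both have $N$-minors, so $e$ would be $N$-flexible in $M\ba a,b$; but the $N$-flexible elements of $M\ba a,b$ lie in $\{x,y,u\}$ because the gadget for $\{a,b\}$ is Type~I, and $e\notin\{x,y,u\}$. Thus $y$ is the only possible $N$-flexible element of $M_1$, and since $M_1$ has an $N$-minor but is not $N$-fragile, $y$ is $N$-flexible in $M_1$. Consequently $M_1\ba y$ and $M_1/y$ have $N$-minors, and by the same reasoning neither has an $N$-flexible element, so both are $N$-fragile and hence $3$-connected up to series and parallel classes by \cref{genfragileconn}. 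Since $M_1$ is $3$-connected with at least four elements, $M_1\ba y$ has no parallel pair and $M_1/y$ has no series pair; therefore $M_1\ba y$ is $3$-connected unless $y$ lies in a triad of $M_1$, and $M_1/y$ is $3$-connected unless $y$ lies in a triangle of $M_1$.

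For (3), the crux: suppose $y$ lies in a triangle and a triad of $M_1$. By orthogonality, and since $M_1$ is $3$-connected with at least five elements and so has no triangle-triad, these two sets meet in exactly two elements, one being $y$; write them as $\{y,p,q\}$ and $\{y,p,s\}$ with $q\neq s$. Now $\{p,s\}$ is a series pair of $M_1\ba y$, which has an $N$-minor, so (as $N$ is cosimple) $p$ is $N$-contractible in $M_1\ba y$, whence $M_1/p\supseteq M_1\ba y/p$ has an $N$-minor; dually $\{p,q\}$ is a parallel pair of $M_1/y$, which has an $N$-minor, so (as $N$ is simple) $p$ is $N$-deletable in $M_1/y$, whence $M_1\ba p\supseteq M_1/y\ba p$ has an $N$-minor. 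Then $p$ is $N$-flexible in $M_1$, contradicting (2) as $p\neq y$. Hence one of $M_1\ba y = M\ba a,b,u/x\ba y$ and $M_1/y = M\ba a,b,u/x/y$ is $3$-connected, and being $N$-fragile it is the required minor. I expect the main obstacle to be keeping the connectivity reductions in step~(1) airtight — in particular confirming that $\{x,y\}$ really is the only series pair of $M\ba a,b,u$, so that $M_1$ is genuinely $3$-connected rather than merely $3$-connected up to series pairs — together with the fact used in step~(3) that $N$ being simple (respectively cosimple) forces an element of a parallel (respectively series) pair of a matroid with an $N$-minor to be $N$-deletable (respectively $N$-contractible), even though $M_1$ itself is not $N$-fragile.
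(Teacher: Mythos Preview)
Your proof is correct and follows essentially the same approach as the paper's. The paper's proof is considerably terser: it asserts without further comment that $y$ is the unique $N$-flexible element of $M\ba a,b,u/x$, then argues (implicitly using the $3$-connectivity of $M_1$ that you establish in step~(1)) that if neither $M_1\ba y$ nor $M_1/y$ is $3$-connected, then $y$ is in both a triangle and a triad of $M_1$, whence the other internal element of the resulting $4$-element fan is $N$-flexible --- exactly your element~$p$. Your extra care in step~(1), showing $\{x,y\}$ is the only series pair of $M\ba a,b,u$ so that $M_1$ is genuinely $3$-connected, makes explicit what the paper leaves implicit.
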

\begin{proof}
  Since $M \ba a,b,u/x$ is not $N$-fragile, and the gadget for $\{a,b\}$ is Type~I, the unique $N$-flexible element of $M \ba a,b,u/x$ is $y$.
  So $M \ba a,b,u/x,y$ and $M \ba a,b,u/x \ba y$ are $N$-fragile matroids.
  Assume that neither is $3$-connected.
  Then, by \cref{genfragileconn}, $y$ is in a triangle and a triad of $M \ba a,b,u/x$.
  By orthogonality, $y$ is an internal element of a $4$-element fan.  Let $s$ be the other internal element, so $\{s,y\}$ is contained in a triangle and a triad.
  Then $s$ is in a series pair in $M \ba a,b,u/x \ba y$, so $s$ is $N$-contractible in $M \ba a,b,u/x$.
  Also $s$ is in a parallel pair in $M \ba a,b,u/x,y$, so $s$ is $N$-deletable in $M \ba a,b,u/x$.
  So $s$ is $N$-flexible in $M \ba a,b,u/x$, a contradiction.
\end{proof}

We finish with a corollary that illustrates how we foresee \cref{thegrandfantasy} being used in arguments to bound the size of an excluded minor for $\mathbb{P}$-representability, for some partial field $\mathbb{P}$.
Clearly, an $N$-fragile matroid (or, indeed, any matroid having an $N$-minor) has at most $|E(N)|$ elements that are $N$-essential.
However, for certain classes we may hope for better than this: for example,
when $\mathbb{P} \in \{\mathbb{H}_5,\mathbb{U}_2\}$, it follows from the work of Clark, Mayhew, van Zwam, and Whittle~\cite{CMvZW16} that a $3$-connected $\mathbb{P}$-representable $\utfutf$-fragile matroid with at least nine elements has at most two $N$-essential elements, for $N \in \utfutf$ (we provide a proof of this in \cite{paper2}).

\begin{corollary}
  \label{gadgetsresult}
  \hb\ Suppose $M$ has a pair of elements $\{a,b\}$ such that $M\ba a,b$ is $3$-connected with an $N$-minor, $M$ has no triads, $|E(M)| \ge |E(N)| + 11$, and $M \ba a,b$ is not $N$-fragile, so \cref{bcosw-thm}(ii)(b) holds.
  Either $r(M) \le 6$, or there is an $N$-fragile minor~$M_1$ of $M$ with at least three $N$-essential elements and $|E(M_1)| \ge |E(M)|-5$, where $M_1$ is $3$-connected up to parallel classes.
\end{corollary}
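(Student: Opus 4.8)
The plan is to combine \cref{thegrandfantasy} with \cref{subfrag3conn}, peeling off the cases where the hypotheses of those results fail. First I would invoke \cref{wmatype1}: since $M \ba a,b$ is not $N$-fragile and \cref{bcosw-thm}(ii)(b) holds, after possibly switching to the delete pair $\{a,x\}$ we may assume that the gadget for $\{a,b\}$ is Type~I. (One must check that the conclusions we want — an $N$-fragile minor with many $N$-essential elements — are unaffected by this relabelling of the delete pair; this is routine since $|E(M)|$, $r(M)$, and $|E(N)|$ are unchanged.) So assume the gadget $(\{x,y\},u)$ for $\{a,b\}$ is Type~I, with $u$ the $(N,B)$-strong element.

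Next I would apply \cref{thegrandfantasy}: for every $b' \in B - \{x,y\}$, the element $b'$ is $N$-essential in at least one of $M \ba a,b \ba u$ and $M \ba a,b / u$. Since $|B - \{x,y\}| = r(M) - 2$, one of the two minors $M \ba a,b \ba u$ or $M \ba a,b / u$ contains at least $\lceil (r(M)-2)/2 \rceil$ elements of $B - \{x,y\}$ that are $N$-essential in it; when $r(M) \ge 7$ this is at least three. Call this minor $M_0$ (so $M_0 = M \ba a,b,u$ or $M_0 = M \ba a,b,u$ with $u$ contracted — precisely, $M_0 \in \{M \ba a,b \ba u,\ M \ba a,b / u\}$). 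Note $|E(M_0)| = |E(M)| - 3$. Now either $M_0$ is already $N$-fragile, in which case we are essentially done after cleaning up connectivity, or it is not, in which case I would aim to pass to a further single deletion or contraction. The key observation is that since the gadget is Type~I, the only candidate $N$-flexible element left to destroy lives in a controlled position relative to $\{x,y,u\}$; in the non-fragile case I would use \cref{subfrag3conn} (in the contraction case $M \ba a,b,u/x$, whose hypotheses match exactly) to produce a $3$-connected $N$-fragile minor $M_1$ obtained from $M_0$ by one more element removal, and check that the $N$-essential elements of $M_0$ among $B-\{x,y\}$ remain $N$-essential in $M_1$ (an element that is $N$-essential in a minor $M_0$ stays $N$-essential in any minor of $M_0$ that still has an $N$-minor, since having fewer minors can only help). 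The delete case $M \ba a,b \ba u$ is symmetric/dual, or can be reduced to it.

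After obtaining $M_1$, the remaining points are bookkeeping: $|E(M_1)| \ge |E(M)| - 3 - 2 = |E(M)| - 5$ (we remove $\{a,b\}$, then $u$, then at most two more elements among $\{x,y\}$), $M_1$ is $N$-fragile, and by \cref{genfragileconn} an $N$-fragile matroid is $3$-connected up to series and parallel classes — I would argue that in our situation it is in fact $3$-connected up to parallel classes, either because the removals were contractions (killing potential series classes) or by a direct orthogonality check using that $M$ has no triads. Finally, the count: $M_1$ retains at least three of the $N$-essential elements guaranteed by \cref{thegrandfantasy}, because those elements of $B-\{x,y\}$ are distinct from $x,y,u,a,b$ and from any further element we removed, and $N$-essentiality is inherited downward along minors that keep an $N$-minor. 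The dichotomy $r(M) \le 6$ versus $r(M) \ge 7$ is exactly what makes $\lceil (r(M)-2)/2 \rceil \ge 3$.

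The main obstacle I anticipate is the non-fragile case for $M_0 = M \ba a,b \ba u$ (the deletion branch), where \cref{subfrag3conn} as stated applies to the contraction $M \ba a,b,u/x$ rather than to $M \ba a,b,u \ba x$; one must either dualize the argument of \cref{subfrag3conn} by hand or argue that in the Type~I gadget the relevant $N$-flexible element behaves the same way under the deletion, and then verify that the further single-element removal does not disturb the three $N$-essential elements. A secondary subtlety is confirming that the "at least three" count survives all the passes to minors and the possible relabelling from \cref{wmatype1}; this is where careful attention to which named elements ($a,b,x,y,u$, and the at-most-one extra) are involved pays off, and it is aided by $|E(M)| \ge |E(N)| + 11$ giving enough slack that $B - \{x,y\}$ is large.
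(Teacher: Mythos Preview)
Your overall strategy matches the paper's proof: reduce to a Type~I gadget via \cref{wmatype1}, apply \cref{thegrandfantasy} and pigeonhole to find $M_0 \in \{M\ba a,b\ba u,\ M\ba a,b/u\}$ with at least three $N$-essential elements from $B-\{x,y\}$, then descend to an $N$-fragile minor $M_1$ and observe that $N$-essential elements persist in minors.  Two points deserve tightening, and your anticipated ``main obstacle'' is in fact a non-issue.

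First, you have the two branches reversed.  \Cref{subfrag3conn} is tailored to the \emph{deletion} branch $M_0 = M\ba a,b\ba u$: there $\{x,y\}$ is a series pair (coming from the triad $\{u,x,y\}$), so $x$ is $N$-contractible, and one simply passes to $M_0/x = M\ba a,b,u/x$.  The paper observes that $M_0/x$ has no series pairs (since $\{u,x,y\}$ is the unique triad of $M\ba a,b$ containing $u$); if $M_0/x$ is $N$-fragile we are done, and otherwise \cref{subfrag3conn} applies verbatim.  No dualisation is needed, and there is no competing route through $M\ba a,b,u\ba x$ to worry about.

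Second, the \emph{contraction} branch $M_0 = M\ba a,b/u$ is not handled by your ``symmetric/dual'' handwave, and \cref{subfrag3conn} does not apply to it.  The paper's treatment here is actually simpler: since the $N$-flexible elements of $M\ba a,b$ lie in $\{u,x,y\}$, those of $M_0$ lie in $\{x,y\}$, and contracting whichever of $x,y$ remain flexible yields an $N$-fragile $M_1 \in \{M_0,\ M_0/x,\ M_0/y,\ M_0/x,y\}$.  Because $M\ba a,b$ is $3$-connected and only contractions have been performed, $M_1$ has no series pairs, so \cref{genfragileconn} gives $3$-connectivity up to parallel classes.  Your bookkeeping on $|E(M_1)| \ge |E(M)|-5$ and on the survival of the three essential elements (which lie in $B-\{x,y\}$ and are therefore untouched) is correct.
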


\begin{proof}
  Suppose $r(M) \ge 7$.
  Up to swapping the labels on $a$ and $b$, we may assume that $a$ blocks in the gadget for $\{a,b\}$.
  By \cref{wmatype1}, either the gadget for $\{a,b\}$ is Type~I, or $M \ba a,x$ is $3$-connected with an $N$-minor, but not $N$-fragile, and the gadget for $\{a,x\}$ is Type~I.
  Then, up to swapping the labels on $b$ and $x$, we may assume that the gadget for $\{a,b\}$ is Type~I.
  Now, for each $b' \in B-\{x,y\}$, \cref{thegrandfantasy} implies that $b'$ is $N$-essential in either $M \ba a,b,u$ or $M \ba a,b / u$.
  Since $|B-\{x,y\}| \ge 5$, it follows that for some $M_0 \in \{M \ba a,b,u, M \ba a,b / u\}$, the matroid $M_0$ has at least three $N$-essential elements.

  First suppose $M_0 = M \ba a,b / u$.
  Since the $N$-flexible elements of $M \ba a,b$ are contained in $\{x,y,u\}$, the matroid $M_1$ is $N$-fragile for some $M_1 \in \{M_0, M_0/x, M_0/y, M_0/x,y\}$.
  By \cref{genfragileconn}, it follows that $M_1$ is $3$-connected up to parallel classes.
  Now suppose $M_0 = M \ba a,b,u$.
  Then, as $\{x,y\}$ is a series pair in this matroid, $M_0 / x$ has an $N$-minor.
  Moreover, since the only triad of $M \ba a,b$ containing $u$ is $\{u,x,y\}$, the matroid $M_0/x$ has no series pairs.
  If $M_0/x$ is $N$-fragile, then let $M_1=M_0/x$, so $M_1$ is $3$-connected up to parallel classes by \cref{genfragileconn}.
  Otherwise, we obtain a $3$-connected $N$-fragile minor $M_1$ of $M_0$ having $|E(M_1)| \ge |E(M)|-5$ by applying \cref{subfrag3conn}.

  Finally, observe that the $N$-essential elements of $M_0$ remain $N$-essential in the minor $M_1$, so $M_1$ has at least three $N$-essential elements.
\end{proof}

\bibliographystyle{acm}
\bibliography{lib}

\begin{thebibliography}{10}

\bibitem{Bixby82}
{\sc Bixby, R.~E.}
\newblock A simple theorem on $3$-connectivity.
\newblock {\em Linear Algebra and its Applications 45\/} (1982), 123--126.

\bibitem{BCOSW20}
{\sc Brettell, N., Clark, B., Oxley, J., Semple, C., and Whittle, G.}
\newblock Excluded minors are almost fragile.
\newblock {\em Journal of Combinatorial Theory, Series B 140\/} (2020),
  263--322.

\bibitem{paper2}
{\sc Brettell, N., Oxley, J., Semple, C., and Whittle, G.}
\newblock The excluded minors for $2$- and $3$-regular matroids.
\newblock Preprint, arXiv:2206.15188, 2022.

\bibitem{BS14}
{\sc Brettell, N., and Semple, C.}
\newblock A splitter theorem relative to a fixed basis.
\newblock {\em Annals of Combinatorics 18}, 1 (2014), 1--20.

\bibitem{BWW20}
{\sc Brettell, N., Whittle, G., and Williams, A.}
\newblock {$N$}-detachable pairs in $3$-connected matroids~{I}: unveiling
  {$X$}.
\newblock {\em Journal of Combinatorial Theory, Series B 141\/} (2020),
  295--342.

\bibitem{BWW21}
{\sc Brettell, N., Whittle, G., and Williams, A.}
\newblock {$N$}-detachable pairs in $3$-connected matroids~{II}: life in {$X$}.
\newblock {\em Journal of Combinatorial Theory, Series B 149\/} (2021),
  222--271.

\bibitem{BWW22}
{\sc Brettell, N., Whittle, G., and Williams, A.}
\newblock {$N$}-detachable pairs in $3$-connected matroids~{III}: the theorem.
\newblock {\em Journal of Combinatorial Theory, Series B 153\/} (2022),
  223--290.

\bibitem{CMvZW16}
{\sc Clark, B., Mayhew, D., van Zwam, S. H.~M., and Whittle, G.}
\newblock The structure of $\{U_{2,5}, U_{3,5}\}$-fragile matroids.
\newblock {\em SIAM Journal on Discrete Mathematics 30}, 3 (2016), 1480--1508.

\bibitem{MvZW10}
{\sc Mayhew, D., van Zwam, S. H.~M., and Whittle, G.}
\newblock {Stability, fragility, and Rota's Conjecture}.
\newblock {\em Journal of Combinatorial Theory, Series B 102}, 3 (2012),
  760--783.

\bibitem{Oxley11}
{\sc Oxley, J.}
\newblock {\em {Matroid theory, Second Edition}}.
\newblock Oxford University Press, New York, 2011.

\bibitem{OSV00}
{\sc Oxley, J., Semple, C., and Vertigan, D.}
\newblock Generalized {Delta-Y} exchange and $k$-regular matroids.
\newblock {\em Journal of Combinatorial Theory, Series B 79}, 1 (2000), 1--65.

\bibitem{PvZ10b}
{\sc Pendavingh, R.~A., and van Zwam, S. H.~M.}
\newblock Confinement of matroid representations to subsets of partial fields.
\newblock {\em Journal of Combinatorial Theory, Series B 100}, 6 (2010),
  510--545.

\end{thebibliography}

\end{document}